\DeclareSymbolFont{cyrletters}{OT2}{wncyr}{m}{n}
\numberwithin{equation}{section} \numberwithin{figure}{section}
\DeclareMathOperator{\Pic}{Pic} 
\DeclareMathOperator{\PDiv}{PDiv} \DeclareMathOperator{\Lines}{Lines}
\DeclareMathOperator{\Gal}{Gal}
\DeclareMathOperator{\Hom}{Hom} \DeclareMathOperator{\re}{Re}
\DeclareMathOperator{\Br}{Br} 
\DeclareMathOperator{\inv}{inv}
\DeclareMathOperator{\HH}{H} 
\let\div\relax
\DeclareMathOperator{\div}{div} 
\DeclareSymbolFont{cyrletters}{OT2}{wncyr}{m}{n}
\DeclareMathSymbol{\Sha}{\mathalpha}{cyrletters}{"58}
\DeclareMathSymbol{\Be}{\mathalpha}{cyrletters}{"42}
\newcommand{\OO}{\mathcal{O}}
\newcommand\FF{\mathbb{F}}
\newcommand\PP{\mathbb{P}}
\renewcommand\AA{\mathbb{A}}
\newcommand\ZZ{\mathbb{Z}}
\newcommand\NN{\mathbb{N}}
\newcommand\QQ{\mathbb{Q}}
\newcommand\RR{\mathbb{R}}
\newcommand\CC{\mathbb{C}}
\newcommand\GG{\mathbb{G}}
\newcommand\Gm{\GG_\mathrm{m}}
\newcommand{\Adele}{\mathbf{A}}
\newcommand{\sU}{\mathcal{U}}
\newcommand{\Zp}{\mathbb{Z}_p}
\newcommand{\Qp}{\mathbb{Q}_p}
\newcommand{\bu}{{\bf u}}
\newcommand{\by}{{\bf y}}
\renewcommand{\(}{\left(}
\renewcommand{\)}{\right)}
\newcommand{\nequiv}{\not\equiv}
\DeclareMathOperator{\valp}{v_{\it p}}
\DeclareMathOperator{\valn}{v}
\newcommand{\ve}{\varepsilon}
\newtheorem{lemma}{Lemma}
\newtheorem{theorem}[lemma]{Theorem}
\newtheorem{proposition}[lemma]{Proposition}
\newtheorem{corollary}[lemma]{Corollary}
\theoremstyle{definition}
\newtheorem{remark}[lemma]{Remark}
\numberwithin{lemma}{section}
\begin{document}
\title[Markoff surfaces]{Integral Hasse principle and strong approximation for Markoff surfaces}
\author{Daniel Loughran}
  \address{
  Department of Mathematical Sciences \\
University of Bath \\
Claverton Down \\
Bath \\
BA2 7AY \\
UK}

\author{Vladimir Mitankin}
	\address{Max Planck Institut f\"{u}r Mathematik \\
		Vivatsgasse 7 \\
		53111 Bonn \\
		Germany
	}
	\email{vmitankin@mpim-bonn.mpg.de}

\subjclass[2010]
{14G05 (primary), 
11D25, 
14F22 
(secondary)}

\begin{abstract}
	We study the failure of the integral Hasse principle and strong approximation for Markoff surfaces, as studied by Ghosh and Sarnak, using the Brauer--Manin obstruction.
\end{abstract}

\maketitle
\thispagestyle{empty}
\tableofcontents

\section{Introduction} \label{sec:intro}
For each $m \in \ZZ$, we consider the affine surfaces
\begin{equation} \label{def:Um}
  U_m: \quad u_1^2 + u_2^2 + u_3^2 - u_1u_2u_3 = m.
\end{equation}
We denote by $\sU_m$ the integral model of $U_m$ defined over $\ZZ$ by the same equation. In a recent paper \cite{GS17}, Ghosh and Sarnak studied integral points and failures of the integral Hasse principle for such surfaces. In our paper we extend their analysis using the Brauer--Manin obstruction. 

Here we say that $\mathcal{U}_m$ \emph{fails the integral Hasse principle} if $\sU_m(\Adele_{\ZZ}) \neq \emptyset$ but $\sU_m(\ZZ) = \emptyset$, where $\Adele_\ZZ = \RR \times \prod_p \ZZ_p$. We say that $U_m$ \emph{satisfies weak approximation} if the image of $U_m(\QQ)$ in $\prod_{v} U_m(\QQ_v)$ is dense, where the product is over all places of $\QQ$. Finally, we say that $\sU_m$ \emph{satisfies strong approximation} if $\sU(\ZZ)$ is dense in  $\sU_m(\Adele_\ZZ)_{\bullet}:=\pi_0(U_m(\RR)) \times \prod_{p} \sU_m(\ZZ_p)$, where $\pi_0(U_m(\RR))$ denotes the set of connected components of $U_m(\RR)$. Note that we work with $\pi_0(U_m(\RR))$ since $\sU(\ZZ)$ is \emph{never} dense in  $U_m(\RR)$ for simple topological reasons.

We first note that the natural compactification of \eqref{def:Um} in $\PP^3$ is a smooth cubic surface for $m \neq 0,4$, and the hyperplane at infinity consists of three coplanar lines. In particular, the rational points on $U_m$ are Zariski dense by \cite{Kol02}, so that $U_m(\QQ) \neq \emptyset$.
On the other hand, by \cite[Prop.~6.1]{GS17} for every integer $m$ we have $\sU_m(\Adele_{\ZZ}) \neq \emptyset$ unless $m \equiv 3 \bmod 4$ or $m \equiv \pm 3 \bmod 9$.  In particular, a positive proportion of these surfaces have an $\Adele_{\ZZ}$-point.

Our first theorem shows that these surfaces almost always fail weak approximation.

\begin{theorem}	 \label{thm:WA}
	We have
	$$\#\{ m \in \ZZ : 
		|m| \leq B,
		\text{ $U_m$ satisfies weak approximation}\} \ll B^{1/2}.$$
\end{theorem}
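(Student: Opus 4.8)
The plan is to exhibit, for all but a sparse set of $m$, a Brauer class $\alpha \in \Br U_m$ that obstructs weak approximation. Since $U_m$ is (for $m \neq 0,4$) an affine cubic surface whose projective closure has three coplanar lines at infinity, one expects $\Br U_m / \Br \QQ$ to contain classes of order $2$ or $3$ arising from these lines; concretely, one writes down a quaternion algebra, such as $\alpha_m = (f, g)$ for suitable polynomials $f, g \in \QQ[u_1,u_2,u_3]$ built out of the linear forms cutting out the lines at infinity (for instance $u_i - 2$ type factors, using that $u_i^2 - u_i u_j u_k + \cdots$ factors in a useful way when one coordinate is fixed). First I would verify that $\alpha_m$ extends to an element of $\Br U_m$ — i.e. that its ramification along the boundary divisors cancels — which reduces to a residue computation with the tame symbol; this typically forces a congruence condition on $m$, and the point is that this condition holds for a density-one set, with the complement controlled by $O(B^{1/2})$ (e.g. $m$ in a fixed square class, or $m \equiv$ something mod a small modulus).

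Next I would compute the local invariants $\inv_v \alpha_m \colon U_m(\QQ_v) \to \QQ/\ZZ$ for each place $v$, and show that for the relevant $m$ there exist two places $v_1, v_2$ (among $\{\infty\} \cup \{p : p \mid \text{some fixed quantity depending on } m\}$, most naturally the real place together with $p = 2$ or $p = 3$, or a prime dividing $m$) at which $\inv_{v_i} \alpha_m$ is non-constant on $U_m(\QQ_{v_i})$. Together with the global reciprocity law $\sum_v \inv_v \alpha_m = 0$ on $U_m(\QQ)$, the existence of even one place where $\inv_v \alpha_m$ takes at least two values already shows $U_m(\QQ)$ is not dense in $\prod_v U_m(\QQ_v)$: one picks an adelic point whose $v$-component lands in the "wrong" fibre of $\inv_v \alpha_m$ while keeping the other components so that the sum of invariants is nonzero. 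So the heart of the argument is: (i) $\alpha_m$ is a well-defined nonconstant element of $\Br U_m$, and (ii) $\inv_v \alpha_m$ is nonconstant for at least one $v$.

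To make this uniform in $m$ and get the $B^{1/2}$ bound, I would argue that the only $m$ for which the construction fails are those for which either (a) the candidate symbol $\alpha_m$ fails to be unramified, or (b) all local invariant maps happen to be constant. In case (a) the failure is a closed condition on $m$ that, after clearing, says a certain polynomial in $m$ is a square (or that $m$ lies in finitely many residue classes modulo a fixed integer), and in either case the number of such $m$ with $|m| \leq B$ is $O(B^{1/2})$. In case (b), constancy of $\inv_p \alpha_m$ for all $p$ at which $U_m$ has good or mild reduction is automatic (by purity/good reduction for the Brauer group of the integral model $\sU_m$ away from a controlled finite set $S_m$ of bad primes), so the only danger is that $\inv_v \alpha_m$ is constant at every $v \in S_m \cup \{\infty\}$ simultaneously; I would show this forces an algebraic relation on $m$ cutting out $O(B^{1/2})$ values, using that the discriminant of the cubic surface, which governs $S_m$, is a polynomial in $m$ whose squarefree part grows.

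The main obstacle I anticipate is step (i)–(ii) done \emph{uniformly}: writing down a single algebraic family of Brauer classes $\alpha_m$, with an explicit generic description of its ramification and of the local invariant maps as functions of $m$, rather than treating each $m$ by hand. Controlling the bad-prime set $S_m$ and showing the local conditions "decouple" generically — so that one cannot have cancellation at all places at once except on a thin set — is where the real work lies; everything else (the reciprocity argument producing the weak-approximation failure, and the counting bound $O(B^{1/2})$ for the exceptional loci, which are unions of square-classes and fixed congruence classes) is comparatively routine.
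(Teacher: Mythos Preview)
Your strategy matches the paper's: produce a Brauer class and a place where its local invariant is non-constant, then bound the exceptional $m$ by $O(B^{1/2})$. But the proposal stops short of the one concrete identification that makes this go through, and your guesses for the relevant place are off. The class used is $\alpha = (u_i^2-4,\, m-4)$ for any $i\in\{1,2,3\}$; via the conic-bundle structure on the projective closure $S_m$ induced by the line $L_i$ at infinity, one checks that $\alpha$ is unramified on all of $S_m$, so there is no boundary issue and weak approximation is the right framework. The place is not $2$, $3$, $\infty$, nor a prime dividing $m$, but any prime $p>3$ with $\valp(m-4)$ \emph{odd}. At such $p$, a Hensel-lemma argument lifting smooth $\FF_p$-points of the reduction (which is the Cayley cubic) produces $\ZZ_p$-points with $u_i^2-4$ a unit square and others with $u_i^2-4$ a unit non-square; since $\valp(m-4)$ is odd, the Hilbert symbol $(u_i^2-4,m-4)_p$ then takes both values, so $\inv_p\alpha$ is non-constant on $\sU_m(\ZZ_p)\subset U_m(\QQ_p)$ and weak approximation fails.

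Consequently the only $m$ left over are those with no prime $p>3$ dividing $m-4$ to odd order, i.e.\ $m-4=\pm 2^a 3^b d^2$, together with the $m$ excluded from the Brauer computation (those with one of $m$, $m-4$, $m(m-4)$, $-(m-4)$ a square); both sets are visibly $O(B^{1/2})$. In particular your ``case (b)'' worry --- that the local invariants might all be constant except on a thin set controlled by some decoupling across the whole bad set $S_m$ --- evaporates: a \emph{single} suitable prime dividing $m-4$ already suffices, and no global control of $S_m$ or cancellation argument is needed.
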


Theorem \ref{thm:WA} is sharp, since weak approximation holds when $m-4$ is a square as such surfaces are rational (see Lemma \ref{lem:rational}). 

Our method shows that for $\gg B^{1/2}$ of $m$, there are elements of $\prod_{p \mid 2(m-4)} \sU_m(\ZZ_p)$ which cannot be approximated by a rational point, so that the failure of weak approximation yields a failure of strong approximation. More naively: for almost all $m$, there are solutions to the equation \eqref{def:Um} modulo some integer (depending on $m$) which cannot be realised by a rational solution.

In fact, we are also able to show that there is almost always a failure of strong approximation which is \emph{not} explained by a failure of weak approximation. To make this precise: Let $\overline{U_m(\QQ)}$ be the closure of $U_m(\QQ)$ in $\prod_v U_m(\QQ_v)$ and let $\overline{\sU_m(\ZZ)}$ be the closure of $\sU_m(\ZZ)$ in $\sU_m(\Adele_\ZZ)_{\bullet}$.  We clearly have $\overline{U_m(\QQ)} \subseteq \prod_v U_m(\QQ_v)$ and $\overline{\sU_m(\ZZ)} \subseteq \sU_m(\Adele_\ZZ)_{\bullet} \cap \overline{U_m(\QQ)}$ (here we abuse notation slightly and consider the image of $\overline{U_m(\QQ)}$ inside $\pi_0(U_m(\RR)) \times \prod_p U_m(\QQ_p)$).  Theorem \ref{thm:WA} says that the former  inclusion is strict for almost all $U_m$; our next theorem shows that the second inclusion is also strict for almost all $U_m$.

\begin{theorem} \label{thm:SA}
	We have
	$$\#\{ m \in \ZZ : 
		|m| \leq B, \sU_m(\Adele_\ZZ) \neq \emptyset
		\text{ but } \overline{\sU_m(\ZZ)} = \sU_m(\Adele_\ZZ)_{\bullet} \cap \overline{U_m(\QQ)}
	\} \ll B^{1/2}.$$
\end{theorem}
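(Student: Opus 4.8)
\textbf{Proof proposal for Theorem \ref{thm:SA}.}

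The plan is to produce, for all but $O(B^{1/2})$ values of $m$ with $\sU_m(\Adele_\ZZ)\neq\emptyset$, an explicit element of $\sU_m(\Adele_\ZZ)_\bullet$ that lies in $\overline{U_m(\QQ)}$ but \emph{not} in $\overline{\sU_m(\ZZ)}$, so that the second inclusion is strict. The obstruction to membership in $\overline{\sU_m(\ZZ)}$ will come from the Brauer group of $U_m$: one exhibits a class $\alpha_m\in\Br U_m$ (conjecturally of order $2$, built from the symbol arising from the factorisation of $u_1u_2u_3$, or from one of the three lines at infinity) such that the integral Brauer--Manin set $\sU_m(\Adele_\ZZ)^{\alpha_m}$ is a proper nonempty subset of $\sU_m(\Adele_\ZZ)_\bullet$, while $\alpha_m$ is constant on $\overline{U_m(\QQ)}$ (being in the image of $\Br\QQ\to\Br U_m$, or simply because its sum over all places vanishes on rational points). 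The construction of $\alpha_m$, its evaluation at the archimedean and $p$-adic places for $p\mid 2(m-4)$, and the verification that it obstructs at least one integral adelic point while not all of them, is the technical heart; I expect this Brauer class and its local invariants to have already been set up in the body of the paper, so here one only needs to quantify for how many $m$ the obstruction is \emph{non-trivial and non-everywhere}.

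Concretely, the argument proceeds in three steps. First, fix the finite set $S_m$ of bad places (dividing $2(m-4)\infty$) and compute, as a function of the residues of $m$ modulo small powers of the primes in $S_m$, the local invariant maps $\inv_p\alpha_m\colon\sU_m(\ZZ_p)\to\tfrac12\ZZ/\ZZ$; for the generic residue classes these maps are non-constant at at least one place and hence $\sU_m(\Adele_\ZZ)^{\alpha_m}\subsetneq\sU_m(\Adele_\ZZ)_\bullet$. Second, show that the adelic point one removes does lie in $\overline{U_m(\QQ)}$: since $U_m$ is a smooth cubic surface (for $m\neq 0,4$) with Zariski-dense rational points by \cite{Kol02}, and $\alpha_m$ comes from $\Br\QQ$, every local point at a place outside $S_m$ is unobstructed and can be matched by rational points there; combined with density of $U_m(\QQ)$ in $\prod_{v\in S_m}U_m(\QQ_v)$ (which one must justify, e.g.\ via the conic bundle / fibration structure or directly from the geometry of the lines at infinity, at least on the relevant component), this places the chosen element of $\sU_m(\Adele_\ZZ)_\bullet$ inside $\sU_m(\Adele_\ZZ)_\bullet\cap\overline{U_m(\QQ)}$ but outside $\overline{\sU_m(\ZZ)}\subseteq\sU_m(\Adele_\ZZ)^{\alpha_m}$. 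Third — and this is the counting step that matches the $\ll B^{1/2}$ bound — one shows that the set of $m$ for which this strategy \emph{fails} is thin: failure requires either $\sU_m(\Adele_\ZZ)=\emptyset$ (which itself happens for a positive proportion but that is irrelevant, we only count inside $\sU_m(\Adele_\ZZ)\neq\emptyset$), or $m-4$ a square (the rational/weak-approximation-holds locus, $O(B^{1/2})$ values by Lemma \ref{lem:rational}), or $m$ landing in one of finitely many bad congruence classes for which the local invariants of $\alpha_m$ happen to be constant; one argues that the latter forces $m$ into a set of density zero, in fact (after refining the Brauer class or using $2$-adic / $3$-adic information as in the proof of Theorem \ref{thm:WA}) into a set of size $O(B^{1/2})$, the square values dominating.

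The main obstacle is the last step, matching the counting exponent $1/2$ rather than merely showing density zero. As in Theorem \ref{thm:WA}, one needs the bad set to be essentially governed by a single squarefree-type condition: concretely, one must argue that whenever $m-4$ is \emph{not} a perfect square there is a prime $p\mid(m-4)$ (or $p=2$) at which the local invariant of $\alpha_m$ genuinely varies over $\sU_m(\ZZ_p)$, so that the obstruction is effective; the exceptional $m$ are then exactly those with $m-4$ a square, giving $\ll B^{1/2}$. Making this uniform in $m$ requires a careful case analysis of the splitting behaviour of the relevant quadratic extension (roughly $\QQ(\sqrt{m-4})$, cf.\ the discriminant of the cubic) at primes dividing $m-4$, and of the possible degeneracies of the local point sets $\sU_m(\ZZ_p)$; I expect this to reuse, essentially verbatim, the local computations already performed for Theorem \ref{thm:WA}, now applied to the \emph{integral} (rather than rational) adelic points, with the extra input that the removed adelic point survives in $\overline{U_m(\QQ)}$ by the cubic-surface geometry. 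Once that uniformity is in hand, Theorem \ref{thm:SA} follows by subtracting the $O(B^{1/2})$ exceptional values from the positive-density set of $m$ with $\sU_m(\Adele_\ZZ)\neq\emptyset$.
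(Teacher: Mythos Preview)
Your broad outline — use a Brauer class on $U_m$ to exclude an integral adelic point from $\overline{\sU_m(\ZZ)}$, verify that this point nonetheless lies in $\overline{U_m(\QQ)}$, and then count the exceptional $m$ as $O(B^{1/2})$ — matches the paper's strategy, and your counting step is essentially right (the exceptional set is $\{m:\text{no prime }p>3\text{ divides }m-4\text{ to odd order}\}$ together with the $m$ outside the hypotheses of Corollary~\ref{cor:generic}, both $O(B^{1/2})$). But Step~2 contains a real gap.

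You propose to show that the obstructed adelic point lies in $\overline{U_m(\QQ)}$ by invoking ``density of $U_m(\QQ)$ in $\prod_{v\in S_m}U_m(\QQ_v)$''. That is precisely weak approximation at the bad places, which by Theorem~\ref{thm:WA} \emph{fails} for almost all $m$; so this step is circular. (Relatedly, saying $\alpha_m$ ``comes from $\Br\QQ$'' is inconsistent with it obstructing anything.) The paper resolves this by keeping track of \emph{two} Brauer classes playing different roles. The class $\alpha\in\Br S_m$ generates $\Br S_m/\Br\QQ$ (Lemma~\ref{lem:Br of comp}), and by the theorem of Salberger--Skorobogatov \cite{SS91} the Brauer--Manin obstruction is the \emph{only} obstruction to weak approximation on $S_m$; hence one has the exact identification $\overline{U_m(\QQ)}=\bigl(\prod_v U_m(\QQ_v)\bigr)^{\alpha}$. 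Membership in $\overline{U_m(\QQ)}$ is therefore a purely local condition on $\alpha$. One then uses a second class $\alpha_{1,-}\in\Br U_m\setminus\Br S_m$ to obstruct integral points. The point is that, at a prime $p>3$ with $\valp(m-4)$ odd, Propositions~\ref{prop:prolific} and~\ref{prop:3_5} show that the pair $(\inv_p\alpha,\inv_p\alpha_{1,-})$ assumes all four values in $(\ZZ/2\ZZ)^2$ on $\sU_m(\ZZ_p)$; so one can adjust the $p$-component of any adele to lie in $\sU_m(\Adele_\ZZ)^{\alpha}$ (hence in $\overline{U_m(\QQ)}$) while simultaneously forcing it \emph{out} of $\sU_m(\Adele_\ZZ)^{\alpha_{1,-}}\supseteq\overline{\sU_m(\ZZ)}$. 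This is Corollary~\ref{cor:SA}. Without the input from \cite{SS91} (or an equivalent description of $\overline{U_m(\QQ)}$), you have no way to certify that your obstructed adelic point is rationally approximable, and the argument does not go through.
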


Explicit examples illustrating these results can be found in \S \ref{sec:BM_examples}. On the other hand, we show that if there is a Brauer--Manin obstruction to the integral Hasse principle, then there are only finitely many explicit possibilities for $m-4$ modulo squares. This result may be viewed as an analogue of the finiteness of exceptional spinor
classes in the study of the representation of an integer by a ternary quadratic form, as studied in \cite[\S 7]{CTX09}.

\begin{theorem} \label{thm:235}
	Assume that $m \in \ZZ$ is such that $\sU_m$ has a Brauer--Manin
	obstruction to the integral	Hasse principle. 
	Then
	$$m-4  \bmod \QQ^{*2} \in \langle \pm 1, 2,3,5\rangle \subset \QQ^*/\QQ^{*2}.$$
\end{theorem}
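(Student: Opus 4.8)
The plan is to combine the description of $\Br U_m$ with a local analysis at the primes dividing $m-4$. Recall that $\Br U_m/\Br\QQ$ is generated over $\Br\QQ$ by quaternion algebras coming from the three conic bundle structures $U_m\to\AA^1_{u_i}$ ($i=1,2,3$), all of which are split by $\QQ(\sqrt{m-4})$: every singular fibre of each of these conic bundles lies over $u_i=\pm2$ or over $u_i^2=m$, and all of these split over $\QQ(\sqrt{m-4})$, as one sees from identities on $U_m$ such as $(u_1-u_2)^2-(m-4)=(u_3-2)(u_1u_2-u_3-2)$ and $(u_1+u_2)^2-(m-4)=(u_3+2)(u_1u_2-u_3+2)$; among the generators are the classes $\mathcal{B}_i=(u_i-2,\,m-4)$. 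The first thing I would record is the consequence that at any place $v$ with $m-4\in\QQ_v^{*2}$ every element of $\Br U_m$ evaluates to $0$ on all of $U_m(\QQ_v)$; hence a Brauer--Manin obstruction can only be carried by the archimedean place (when $m<4$), by the prime $2$, and by the odd primes $p$ with $v_p(m-4)$ odd.

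I would then argue by contradiction. Suppose $\sU_m$ has a Brauer--Manin obstruction to the integral Hasse principle, and suppose $m-4\bmod\QQ^{*2}\notin\langle\pm1,2,3,5\rangle$, so that $v_p(m-4)$ is odd for some prime $p\geq7$. Let $G^{\vee}=\Hom(\Br U_m/\Br\QQ,\QQ/\ZZ)$ and let $\Theta_v\colon\sU_m(\ZZ_v)\to G^{\vee}$, $x\mapsto(\mathcal{B}\mapsto\inv_v\mathcal{B}(x))$, be the local evaluation maps (which vanish for all but finitely many $v$). If $\Theta_p$ is surjective, then starting from any $(x_v)\in\sU_m(\Adele_\ZZ)$ and altering only the $p$-adic component one may arrange $\sum_v\Theta_v(x_v)=0$, producing a point of $\sU_m(\Adele_\ZZ)^{\Br}$ and contradicting the obstruction. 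So the whole statement reduces to proving that $\Theta_p$ is surjective; this forces $v_p(m-4)$ to be even for every $p\geq7$, which is exactly the assertion $m-4\bmod\QQ^{*2}\in\langle\pm1,2,3,5\rangle$.

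For the surjectivity of $\Theta_p$: since $p$ and $v_p(m-4)$ are both odd, $\QQ_p(\sqrt{m-4})/\QQ_p$ is ramified, so a unit of $\ZZ_p$ is a norm from it precisely when it is a square mod $p$; hence for $x$ with $u_i(x)-2\in\ZZ_p^{*}$ one has $\inv_p\mathcal{B}_i(x)=\tfrac12$ if and only if $u_i(x)-2$ is a nonsquare mod $p$. One first checks $\sU_m(\ZZ_p)\neq\emptyset$: as $p\geq7$ and $p\mid m-4$ force $p\nmid m$, the conic $u_1^2+u_2^2=m$ in the slice $u_3=0$ has smooth $\FF_p$-points, which lift. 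To realise a prescribed element of $G^{\vee}$ I would fix two of the coordinates $u_i,u_j$ and solve the resulting quadratic for the third; its discriminant equals $(u_i^2-4)(u_j^2-4)+4(m-4)\equiv(u_i^2-4)(u_j^2-4)\pmod p$, so once this is a nonzero square mod $p$ the resulting point lies in $\sU_m(\ZZ_p)$ with the residues $\chi(u_i(x)-2)$ (Legendre symbol) as chosen. This leaves an elementary point-count: for prescribed $\pm1$-values of $\chi(u-2)$ and $\chi(u+2)$ there should be $u\in\FF_p$ realising them. Using $\sum_{u\in\FF_p}\chi(u^2-4)=-1$ the number of such $u$ is $\tfrac14\big(p-2-\chi(-1)\epsilon-\delta-\epsilon\delta\big)$, positive for every $p\geq7$; combined with the analogous Weil/Lang--Weil estimates needed when several generators $\mathcal{B}_i$ must be controlled at once, this yields surjectivity of $\Theta_p$ for all $p\geq7$, whereas $p\in\{2,3,5\}$ require a separate and more delicate $p$-adic analysis — for $p=2,3$ intertwined with the conditions $m\not\equiv3\bmod4$, $m\not\equiv\pm3\bmod9$ that govern $\sU_m(\Adele_\ZZ)\neq\emptyset$. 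This is precisely why the statement involves $\langle\pm1,2,3,5\rangle$ and not a smaller group.

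The step I expect to be the main obstacle is the surjectivity of $\Theta_p$, made uniform in $m$: one must produce, for every $p\geq7$, every residue of $m$ subject to $p\mid m-4$, and every element of $G^{\vee}$ simultaneously, a suitable $\ZZ_p$-point — which needs care when the auxiliary discriminants above become divisible by $p$ (so that one cannot reduce mod $p$ and must track $p$-adic valuations) and when more than one generator has to be controlled at once. Logically prior, and also essential, is having $\Br U_m/\Br\QQ$ described precisely enough, in particular that the entire group is split by $\QQ(\sqrt{m-4})$: this is what makes every Brauer class locally trivial at the "good" places and hence makes the reduction above legitimate.
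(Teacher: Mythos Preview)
Your approach is essentially the same as the paper's: reduce the theorem to proving surjectivity of the local evaluation map $\Theta_p$ at any prime $p>5$ with $v_p(m-4)$ odd, and use this to contradict a Brauer--Manin obstruction. The paper packages this as Corollary~\ref{cor:HP} (which in fact proves more, covering the even-valuation case as well), with the surjectivity being Proposition~\ref{prop:prolific}.

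Two small points to tighten. First, your Brauer group claim (``generated by quaternion algebras split by $\QQ(\sqrt{m-4})$'') relies on the hypotheses of Corollary~\ref{cor:generic}; in particular it needs $m\notin\QQ^{*2}$. You should dispose of the case $m=a^2$ separately by noting that $(a,0,0)\in\sU_m(\ZZ)$, so no obstruction exists. (The remaining failures of Corollary~\ref{cor:generic}, namely $m-4$ or $-(m-4)$ a square, already force $m-4\bmod\QQ^{*2}\in\langle-1\rangle$ and are covered by the conclusion; and $m(m-4)$ a square forces $m\in\{0,4\}$.) Second, for the surjectivity of $\Theta_p$ onto all of $(\ZZ/2\ZZ)^3$: fixing $u_i,u_j$ and solving the quadratic for $u_k$ controls only two of the three invariants, and the Legendre symbol of $u_k-2$ is not obviously under your control. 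The paper sidesteps this by restricting to the diagonal $u_2=u_3$, factoring, and reducing to a point count on an explicit rational curve, giving the clean threshold $p>5$. Your sketch would instead need a genuine character-sum or Weil-bound argument on the affine curve in $\AA^3$ obtained by prescribing all three quadratic characters simultaneously; this works, but is a little more involved than the paper's trick and you should make sure the resulting bound really kicks in at $p=7$.
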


This qualitative statement shows that there is a Brauer--Manin obstruction to the integral Hasse principle for at most $O(B^{1/2})$ of the surfaces $U_m$ for $m \in \ZZ$ with $|m| \le B$. A more in-depth analysis of the Brauer--Manin obstruction allows us to prove that not only is $m-4$ a product of small primes times a square, but all the prime divisors of $m-4$ must satisfy very strong congruence conditions. This allows us to show the improved upper bound $O(B^{1/2}/ (\log B)^{1/2})$, which is sharp by the following theorem.

\begin{theorem} \label{thm:Br asympt}
	We have
	$$\#\{ m \in \ZZ : |m| \leq B, \sU_m(\Adele_\ZZ) \neq \emptyset 
	\text{ but } \sU_m(\Adele_\ZZ)^{\Br} = \emptyset \} \asymp \frac{B^{1/2}}{(\log B)^{1/2}}.$$	
\end{theorem}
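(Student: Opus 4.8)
The plan is to reduce the count to a purely analytic problem about integers built from primes in prescribed congruence classes, where the density of the relevant primes turns out to be exactly $\tfrac12$. By Theorem~\ref{thm:235}, if $\sU_m$ has a Brauer--Manin obstruction to the integral Hasse principle then $m - 4 = dk^2$ for some $k \in \ZZ_{\geq 1}$ and some squarefree $d$ in the finite set $D = \{\pm 1, \pm 2, \pm 3, \pm 5, \pm 6, \pm 10, \pm 15, \pm 30\}$ of representatives of $\langle \pm 1, 2, 3, 5\rangle \subset \QQ^*/\QQ^{*2}$; since $|m| \leq B$ forces $k \ll_d B^{1/2}$, it suffices to count for each $d \in D$ the admissible $k \leq c_d B^{1/2}$.

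I would first pin down which $k$ are admissible. On $U_m$ there is the identity $(2u_1 - u_2u_3)^2 - (u_2^2-4)(u_3^2-4) = 4(m-4)$ and its cyclic permutations, which presents each $(u_i^2-4)(u_j^2-4)$ as a norm from $\QQ(\sqrt{m-4}) = \QQ(\sqrt d)$; hence the quaternion classes $\br := (d, u_i^2-4)$ agree for $i=1,2,3$, are unramified on $U_m$ (the only possible ramification is along $\{u_i = \pm 2\}$, where $m-4 = (u_j \mp u_k)^2$ makes $d$ a square), and together with $\Br_0$ generate $\Br U_m/\Br_0 \cong \ZZ/2$ when $d \neq 1$; when $d = 1$ the class is trivial and there is no obstruction, so assume $d \neq 1$. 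At a prime $p \mid k$ (necessarily odd with $p \nmid 2d$ and $p^2 \mid m-4$) the local invariant $\inv_p \br(P)$ equals the class of $\left(\tfrac dp\right)^{v_p(u_1(P)^2-4)}$, and since $\sU_m$ reduces mod $p$ to the singular Cayley cubic $\sU_4 \bmod p$, whose smooth $\FF_p$-locus meets $\{u_1 = 2\}$, one can show $v_p(u_1^2-4)$ realises both parities on $\sU_m(\ZZ_p)$; consequently $\inv_p\br$ is constant on $\sU_m(\ZZ_p)$ exactly when $\left(\tfrac dp\right) = 1$, i.e.\ $p$ splits in $\QQ(\sqrt d)$. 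So an obstruction forces every prime divisor of $k$ to lie in $\mathcal P_d := \{p \text{ split in } \QQ(\sqrt d)\}$, a union of arithmetic progressions of relative density $\tfrac12$ among the primes, while the data at $\infty$, $2$, $3$ and the primes dividing $d$, together with $\sU_m(\Adele_\ZZ) \neq \emptyset$, impose a fixed congruence condition on $k$ modulo some $N_d$.

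For the upper bound I would invoke the Landau--Selberg--Delange theorem: the number of $k \leq X$ all of whose prime factors lie in a fixed union of arithmetic progressions of relative density $\kappa$ among the primes is $\asymp X(\log X)^{\kappa-1}$, here with $\kappa = \tfrac12$; imposing in addition the congruence $k \bmod N_d$ only changes the constant. Thus the admissible $k \leq c_d B^{1/2}$ number $\ll_d B^{1/2}/(\log B)^{1/2}$, and summing over $d \in D$ gives the upper bound. For the lower bound it is enough to exhibit one workable $d$, for instance $d = 2$: if $k$ is odd with every prime factor $\equiv \pm 1 \bmod 8$ (so split in $\QQ(\sqrt 2)$) and $k \equiv \pm 4 \bmod 9$, then $m = 2k^2 + 4 \equiv 6 \bmod 8$ satisfies $m \not\equiv 3 \bmod 4$ and $m \not\equiv \pm 3 \bmod 9$, so $\sU_m(\Adele_\ZZ) \neq \emptyset$ by \cite[Prop.~6.1]{GS17}; moreover $\inv_\infty \br = 0$ (as $2 > 0$), $\inv_p\br = 0$ at every odd prime (since $p \nmid 2(m-4)$, or $p \mid k$ with $\left(\tfrac 2p\right) = 1$), and $\inv_2\br \equiv \tfrac12$ on $\sU_m(\ZZ_2)$, whence $\sum_v \inv_v \br = \tfrac12$ and $\sU_m(\Adele_\ZZ)^{\Br} = \emptyset$. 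The $k \leq cB^{1/2}$ meeting these conditions number $\gg B^{1/2}/(\log B)^{1/2}$, again by Landau--Selberg--Delange (now in a progression modulo $72$), and distinct $k$ give distinct $m \leq B$; together with the upper bound this yields the asymptotic $\asymp B^{1/2}/(\log B)^{1/2}$.

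The main obstacle is the local analysis of the second paragraph. The delicate point is proving that $v_p(u_i^2-4)$ really takes both parities on $\sU_m(\ZZ_p)$ for $p \mid k$ — this is what fixes the relevant density at exactly $\tfrac12$ and hence the power of $\log$; a weaker outcome (say, only a parity constraint on the number of inert prime factors) would change the answer. One also has to determine the constant values of $\inv_v\br$ at $v = \infty, 2, 3$ and at $v = p \mid d$ precisely enough to see, for at least one $d$ such as $d = 2$, that these constraints are compatible with both the density-$\tfrac12$ splitting condition on the prime factors of $k$ and the non-emptiness condition of \cite[Prop.~6.1]{GS17}, so that the exceptional set is genuinely of size $\asymp B^{1/2}/(\log B)^{1/2}$ rather than smaller.
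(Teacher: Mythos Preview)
Your overall plan—reduce via Theorem~\ref{thm:235} to $m-4 = dk^2$ with $d$ in a finite set, show that a Brauer--Manin obstruction forces every prime $p\mid k$ to split in $\QQ(\sqrt d)$, and then count such $k$ by a Landau--Selberg--Delange argument—is exactly the paper's strategy, and your lower bound via $d=2$ is essentially the paper's Proposition~\ref{prop:alpha GS8.1i}.

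There is, however, a genuine gap in the upper bound coming from your computation of the Brauer group. You assert that $\Br U_m/\Br_0 \cong \ZZ/2\ZZ$, generated by $\br = (d,u_i^2-4)$. This is the Brauer group of the \emph{compactification} $S_m$ (Lemma~\ref{lem:Br of comp}), not of the affine surface. For generic $m$ the paper shows (Proposition~\ref{prop:Br1 Um}, Corollary~\ref{cor:generic}) that $\Br U_m/\Br\QQ \cong (\ZZ/2\ZZ)^3$, with additional generators $\alpha_{i,-} = (u_i-2,m-4)$ that are ramified along the lines $L_j$ at infinity and hence invisible to your unramifiedness check on $U_m$. Since $\sU_m(\Adele_\ZZ)^{\Br} \subseteq \sU_m(\Adele_\ZZ)^{\br}$, the set of $m$ with a full Brauer--Manin obstruction can be strictly larger than the set with an obstruction from $\br$ alone; indeed Proposition~\ref{prop:alpha GS8.3} gives examples where $\sU_m(\Adele_\ZZ)^{\beta}\neq\emptyset$ for every single $\beta$, yet $\sU_m(\Adele_\ZZ)^{\Br}=\emptyset$. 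So your argument, as written, only bounds $\#\{m: \sU_m(\Adele_\ZZ)^{\br}=\emptyset\}$, not the quantity in the theorem.

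The paper closes this gap via Corollary~\ref{cor:HP}: one must show that if \emph{any} prime $p>5$ divides $m-4$ with $m-4\notin\QQ_p^{*2}$, then there is no obstruction from the \emph{entire} group $(\ZZ/2\ZZ)^3$. This requires the full local analysis of all three generators $\alpha_{i,-}$ (Propositions~\ref{prop:prolific} and~\ref{prop:prolific_even}), together with a somewhat delicate argument at $2,3,5,\infty$ in the even-valuation case. Once that is in hand, the resulting constraint on $k$ is exactly the splitting condition you identified, and your analytic endgame goes through unchanged. In short: your skeleton is right and your lower bound is fine, but for the upper bound you must either compute the full Brauer group and redo the local surjectivity for all of $(\ZZ/2\ZZ)^3$, or cite something equivalent to Corollary~\ref{cor:HP}.
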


Note that \cite[Thm.~1.2.(i)]{GS17} claims to obtain the lower bound $B^{1/2}/(\log B)^{1/4}$ for the number of Hasse failures. In fact we shall see in the proof of Theorem~\ref{thm:Br asympt} that their method only gives the lower bound $B^{1/2}/(\log B)^{1/2}$, which agrees with Theorem \ref{thm:Br asympt}.

General methods were developed in \cite{BBL16} and \cite{Bri18} which, when they apply, show that almost all of the varieties in the family fail weak approximation and almost all also have no Brauer-Manin obstruction to the Hasse principle. These results do not apply here as they concern smooth projective varieties. But a similar method may be employed in our case to obtain bounds of the shape $O(B/(\log B))$ in the setting of Theorems \ref{thm:WA} and \ref{thm:Br asympt} (see \cite[Prop.~6.1]{BBL16}). In particular, our upper bounds are stronger than what these general methods would give.

In \cite{GS17}, Ghosh and Sarnak presented numerical evidence that at least $B^\gamma$, for some $1/2 < \gamma < 1$, of the surfaces $U_m$ fail the integral Hasse principle for $|m| \leq B$. In particular, Theorem \ref{thm:Br asympt} implies that almost all of these hypothetical Hasse failures are not explained by the Brauer--Manin obstruction. In our last result we make modest progress towards this by confirming the existence of infinitely many such surfaces.

\begin{theorem} \label{thm:no_Br}
  We have
  \begin{equation*}
    \#\{ m \in \ZZ : |m| \leq B, \sU_m(\Adele_\ZZ)^{\Br} \neq \emptyset 
    \text{ but } \sU_m(\ZZ) = \emptyset \} 
    \gg \frac{B^{1/2}}{\log B}.
  \end{equation*}
\end{theorem}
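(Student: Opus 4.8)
The plan is to exhibit an explicit infinite family of integers $m$ for which $\sU_m$ has points everywhere locally, has no Brauer--Manin obstruction to the integral Hasse principle (so that $\sU_m(\Adele_\ZZ)^{\Br} \neq \emptyset$), yet has no integral point. To produce such $m$ we combine two ingredients. First, by Theorem \ref{thm:235}, the Brauer--Manin obstruction can only occur when $m - 4 \bmod \QQ^{*2}$ lies in the subgroup $\langle \pm 1, 2, 3, 5\rangle$; so if we arrange $m - 4$ to be (a square times) a prime $\ell \notin \{2,3,5\}$, or more generally to avoid this finite list of classes, then automatically $\sU_m(\Adele_\ZZ)^{\Br} = \sU_m(\Adele_\ZZ) \neq \emptyset$ once we know $\sU_m$ has an adelic point. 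Second, we need a mechanism forcing $\sU_m(\ZZ) = \emptyset$ that is \emph{not} of Brauer--Manin type. The natural candidate is a descent/reduction obstruction: following Ghosh--Sarnak, one studies the action of the group generated by Vieta involutions and coordinate permutations on $\sU_m(\ZZ)$, and shows that any integral solution can be reduced to one with bounded height; one then rules out all small solutions by a finite check depending on the congruence class of $m$. Concretely, I would fix a residue class of $m$ modulo a suitable modulus $Q$ (a product of a few small primes) for which: (i) $\sU_m(\ZZ_p) \neq \emptyset$ for all $p$ and $U_m(\RR) \neq \emptyset$; (ii) the reduction theory forces any hypothetical $(u_1,u_2,u_3) \in \sU_m(\ZZ)$ into a small box; (iii) the resulting finitely many Diophantine possibilities are all excluded by the congruence conditions mod $Q$.

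The key steps, in order, are as follows. \emph{Step 1:} Choose the shape $m = 4 + \ell$ (or $m = 4 - \ell$, whichever has adelic points), ranging over primes $\ell$ in a fixed arithmetic progression mod $Q$ chosen so that the local conditions of \cite[Prop.~6.1]{GS17} are met and so that $m - 4 = \ell$ avoids the classes in $\langle \pm 1, 2, 3, 5 \rangle / \QQ^{*2}$; since $\ell$ runs over primes this is automatic for $\ell \notin \{2,3,5\}$. \emph{Step 2:} Invoke Theorem \ref{thm:235} contrapositively to conclude $\sU_m(\Adele_\ZZ)^{\Br} = \sU_m(\Adele_\ZZ) \neq \emptyset$ for every such $m$. \emph{Step 3:} Run the Markoff-triple descent: show that if $\sU_m(\ZZ) \neq \emptyset$ then there is a solution with $\max|u_i|$ bounded in terms of $m$ by the reduction algorithm (the "fundamental region" for the Vieta action), and then sharpen this — using positivity/sign considerations as in \cite{GS17} — to a bound independent of $m$ on the \emph{residues} of a minimal solution modulo $Q$. \emph{Step 4:} Pin down the modulus $Q$ and the progression so that no triple in the reduced range can satisfy $u_1^2 + u_2^2 + u_3^2 - u_1 u_2 u_3 \equiv m \pmod Q$; deduce $\sU_m(\ZZ) = \emptyset$. \emph{Step 5:} Count: the number of primes $\ell \leq B$ in the chosen progression is $\gg B / \log B$ by Dirichlet, but since $m - 4 = \pm\ell$ forces $|m| \leq B \iff \ell \leq B - 4$, we in fact want $m - 4$ of the form $\ell^2$ times a unit if we need a square — here, because Theorem \ref{thm:235} already handles the squarefree part, taking $m - 4 = \ell$ suffices and we get $\gg B/\log B$; if instead the reduction argument needs $m-4$ itself large and prime we still recover $\gg B/\log B$. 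To match the stated bound $B^{1/2}/\log B$, the relevant count is over $m$ with $|m - 4| = \ell^2$ for $\ell$ prime in the progression, giving $\gg \pi(B^{1/2}) \gg B^{1/2}/\log B$.

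The main obstacle is Step 3--4: making the descent produce a genuinely \emph{uniform} finite obstruction. The Vieta reduction naturally bounds a minimal solution's size by something growing with $m$ (roughly $\sqrt{|m|}$ or worse once one tree branch terminates), so a naive finite check does not have a fixed modulus $Q$ doing the job for all $m$ in the family. The resolution I envisage is the one implicit in \cite[Thm.~1.2]{GS17}: for $m$ in a carefully chosen congruence class, the descent tree \emph{never} terminates at a fundamental solution because every vertex would violate a fixed local condition (e.g. a $2$-adic or $3$-adic constraint on the triple propagated along Vieta moves), so the tree is infinite and $\sU_m(\ZZ)$ is empty — with $Q$ a fixed small modulus. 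Verifying that the local obstruction is preserved under the Vieta involutions, and that it is compatible with (rather than implied by) the vanishing of the Brauer--Manin obstruction, is the delicate point; one must check on an explicit Brauer class (the quaternion algebra used in the proof of Theorem \ref{thm:235}) that it evaluates trivially on the adelic set for these $m$, while the congruence obstruction along the reduction tree is of a different, non-abelian nature. Once this compatibility is established, the theorem follows by assembling Steps 1--5.
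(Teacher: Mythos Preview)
Your overall strategy --- take a Ghosh--Sarnak family of Hasse failures established by reduction theory, then verify that the Brauer--Manin set is nonempty --- is exactly the paper's approach. The difficulty is that you never settle on a concrete family, and the two shapes you float ($m-4=\ell$ and $m-4=\ell^2$) are \emph{not} the ones for which \cite{GS17} actually proves $\sU_m(\ZZ)=\emptyset$. The paper simply quotes \cite[Prop.~8.1(ii)]{GS17}: for primes $\ell\ge 13$ with $\ell\equiv\pm 4\bmod 9$ and $\ell\not\equiv\pm 1\bmod 8$, and $m=4+2\ell^2$, one has $\sU_m(\Adele_\ZZ)\neq\emptyset$ but $\sU_m(\ZZ)=\emptyset$. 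Your Steps~3--4 are then entirely outsourced to \cite{GS17}; no new descent argument is needed, and the ``congruence obstruction propagated along Vieta moves'' that you flag as the main obstacle is precisely what Ghosh--Sarnak already supply for this specific family. For your own proposed shapes $m-4=\ell$ or $m-4=\ell^2$ there is no such result available to cite, so as written your Steps~3--4 are a genuine gap.

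There is also a mismatch in Step~2. For the Ghosh--Sarnak family $m-4=2\ell^2$ one has $m-4\equiv 2\bmod\QQ^{*2}$, which \emph{does} lie in $\langle\pm 1,2,3,5\rangle$, so Theorem~\ref{thm:235} gives no information. The paper instead invokes the sharper Corollary~\ref{cor:HP}, which applies because $\ell\mid(m-4)$ with $\ell>5$ and $m-4=2\ell^2\notin\QQ_\ell^{*2}$ (the hypothesis $\ell\not\equiv\pm1\bmod 8$ is exactly what makes $2$ a non-square modulo $\ell$). Your vacillation over the exponent is also resolved by this choice: $|m|\le B$ forces $\ell\ll B^{1/2}$, and counting primes in the specified residue classes modulo $72$ gives $\gg B^{1/2}/\log B$ by Dirichlet. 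The $B^{1/2}$ is thus an artefact of the available Ghosh--Sarnak family having $m-4$ of quadratic size in $\ell$, not a consequence of needing $m-4$ to be a perfect square.
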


We prove our results by explicitly calculating the Brauer group of the surfaces $U_m$ for ``general'' $m$ (namely, for all $m$ outside an explicit thin set). We find that $\Br U_m /\Br \QQ \cong (\ZZ/2\ZZ)^3$, generated by explicit quaternion algebras. Once we know the Brauer group, we then perform a detailed analysis of the Brauer--Manin obstruction associated to these quaternion algebras. The Hasse failures which we use in Theorem \ref{thm:no_Br} were already considered in \cite{GS17}, where reduction theory was used to show the failure of the Hasse principle. We show that there is no Brauer--Manin obstruction in this case using our knowledge of the Brauer group.

An interesting feature of our results is that the Brauer group of $U_m$ over $\bar{\QQ}$ is isomorphic to $\QQ/\ZZ$, with the Galois invariant part being $\ZZ/2\ZZ$. Nonetheless we will show that the non-trivial Galois invariant element does not descend to $\QQ$ for general $m$, hence the transcendental Brauer group is in fact trivial. A similar phenomenon was observed by Colliot-Th\'el\`ene and Wittenberg in their study of sums of three cubes \cite[Prop.~3.1]{CTW12}.

The structure of the paper is as follows. In \S\ref{sec:2} we study affine cubic surfaces given by the complement of three coplanar lines and their transcendental and algebraic Brauer groups. In \S\ref{sec:3} we turn our attention to the natural smooth projective compactifictions of the surfaces \eqref{def:Um}, where we explicitly calculate the algebraic Brauer group of the compactification. In \S\ref{sec:4} we complete our analysis of the Brauer group by calcuating the Brauer group of the affine surface \eqref{def:Um}. We then use the Brauer group in \S\ref{sec:5} to give explicit examples of Brauer-Manin obstructions to the integral Hasse principle and strong approximation, and  prove the results from the introduction.

Whilst preparing our paper for publication, we learnt of the work of Colliot-Th\'el\`ene, Wei and Xu \cite{CTWX}, also studying Markoff surfaces, which  appeared on arXiv very shortly after our work. There is an overlap between our papers concerning the calculations of the Brauer groups  in \S 2,3,4 and some of the examples of the Brauer-Manin obstruction in \S\ref{sec:BM_examples}. However, our methods for calculating the Brauer groups are on the whole quite different, with us opting for more geometric arguments whereas they use more algebraic approach. Our work contains a more detailed analysis of the possibilities for the local invariant maps in \S \ref{sec:invariants}. This is crucial in the proofs of our main theorems on the Brauer--Manin obstruction to the integral Hasse principle (Theorems \ref{thm:235} and \ref{thm:Br asympt}), which do not follow from their work. In \cite{CTWX} they use reduction theory to show that strong approximation \emph{always}
fails, which improves on our Theorems~\ref{thm:WA} and \ref{thm:SA}. They also obtain an improvement over Theorem \ref{thm:no_Br}, by giving a lower bound with $B^{1/2}/(\log B)$ replaced by $B^{1/2}/(\log B)^{1/2}$ \cite[Thm~5.8]{CTWX}. It would be interesting to find a lower bound here of the shape $B^{1/2}/(\log B)^{\gamma}$ for some $\gamma < 1/2$, which, combined with Theorem \ref{thm:Br asympt}, would show that almost all counter-examples the integral Hasse principle are not explained by the Brauer-Manin obstruction. 

\subsubsection*{Notation}
For a variety $X$ over a field $k$, we denote by $\Br X = \HH^2(X,\Gm)$ its Brauer group. We let $\Br_1 X = \ker (\Br X \to \Br X_{\bar{k}})$ denote the \emph{algebraic Brauer group} of $X$. Elements of $\Br X$ which do not belong to $\Br_1 X$ are called \emph{transcendental}.

\subsection*{Acknowledgements} We thank Tim Browning, Jean-Louis Colliot-Th\'el\`ene, Dasheng Wei, Olivier Wittenberg, and Fei Xu for useful discussions, and Martin Bright and Dami\'an Gvirtz for help with \texttt{magma} computations. The first-named author is supported by EPSRC grant EP/R021422/1. The second-named author would like to thank CAPES for the financial support while working on this project.

\section{Geometry of affine cubic surfaces} \label{sec:2}
By an \emph{affine cubic surface}, we mean an affine surface of the form
$$U: f(u_1,u_2,u_3) = 0$$
where $f$ is a polynomial of degree of $3$. The closure of $U$ in $\PP^3$ is a cubic surface $S$. The complement $H=S\setminus U$ is a hyperplane section on $S$. Much of the geometry of $U$ can be understood in terms of the geometry of $S$ and $H$. We begin with some basic remarks.

\subsection{Basic geometry}

\begin{lemma} \label{lem:no_functions}
	Let $S$ be a smooth cubic surface over a field $k$, let $H$ be a hyperplane section
	and set $U = S \setminus H$. Then $\OO(U)^* = k^*$.
\end{lemma}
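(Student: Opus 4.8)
The plan is to argue that any invertible regular function $g$ on $U$ extends to a rational function on $S$ whose divisor is supported on $H$, and then to use that $H$ is irreducible of class equal to the hyperplane class (which generates a direct summand of $\Pic S$) to force the divisor of $g$ to be trivial. First I would pass to the algebraic closure: it suffices to prove $\OO(U_{\bar k})^* = \bar k^*$, since a function fixed by $\Gal(\bar k/k)$ up to the relation $\OO(U)^*/k^* \hookrightarrow \OO(U_{\bar k})^*/\bar k^*$ descends. So assume $k = \bar k$. Write $H = \sum_{i} n_i C_i$ as a sum of irreducible components with positive multiplicities; since $H$ is a hyperplane section of a smooth cubic surface, $[H] = -K_S$ is the anticanonical (hyperplane) class, and in particular $[H] \cdot \ell = 1$ for a line $\ell$, so $H$ has degree $3$ and is reduced when it is a union of coplanar lines, but I only need that $[H]$ is a nonzero class lying in a free direct summand of the torsion-free group $\Pic S \cong \ZZ^7$.

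Next I would take $g \in \OO(U)^*$. Regarding $g$ as a rational function on the projective surface $S$, its divisor $\div(g)$ on $S$ is supported on $H = S \setminus U$, because $g$ and $g^{-1}$ are both regular (hence have no poles) on $U$. Thus $\div(g) = \sum_i a_i C_i$ for integers $a_i$ (possibly negative), where the $C_i$ are the irreducible components of $H$. Since $\div(g)$ is a principal divisor, its class in $\Pic S$ is zero, so $\sum_i a_i [C_i] = 0$ in $\Pic S$. Now each $[C_i]$ is the class of a curve contained in the hyperplane section $H$; writing things in terms of the hyperplane class $h = [H]$, each $C_i$ is an effective divisor with $0 < C_i \le H$ as divisors, so $[H] - [C_i]$ is also effective. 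The key point is then to deduce that the only way a $\ZZ$-linear combination of the $[C_i]$ can vanish, given that they sum (with multiplicities $n_i$) to the primitive-in-a-summand class $h$, is for the coefficients to encode a scalar multiple of the relation $\sum n_i [C_i] = h$; but $h \ne 0$, so no nontrivial multiple of $h$ is $0$, and one concludes $\div(g)$ is supported in a way that forces $g$ to be constant. More carefully: intersecting $\sum a_i [C_i] = 0$ with $h$ gives $\sum a_i (C_i \cdot h) = 0$ with all $C_i \cdot h > 0$ (ampleness of $h$), and intersecting with a line in a suitable ruling, or simply using that $\div(g) = 0$ iff $g \in \bar k^*$ on the proper variety $S$ — indeed a rational function on a proper variety with trivial divisor is constant — gives $g \in \bar k^*$ directly once we know $\div(g) = \sum a_i C_i$ with, in fact, all $a_i \ge 0$ and all $a_i \le 0$.

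To nail that last dichotomy: since $g$ is regular on $U$, $\div(g)$ has no poles on $U$, so $a_i \ge 0$ for all $i$; applying the same to $g^{-1}$ gives $a_i \le 0$ for all $i$; hence $\div(g) = 0$, and a global rational function on the proper variety $S$ with empty divisor is a constant, i.e.\ $g \in \bar k^* = k^*$. The main obstacle is purely at the level of bookkeeping — making sure that "regular and invertible on $U$" genuinely forces the divisor on $S$ to be effective and anti-effective simultaneously, which requires knowing $S$ is normal (it is smooth) so that Weil divisors behave well and $\OO_{S,C_i}$ is a DVR for each prime divisor $C_i$; once that is in place the argument is immediate and does not even need the fine structure of $\Pic S$. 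I would present it in the clean form: $\div_S(g) \ge 0$ and $\div_S(g) \le 0$, hence $g$ is a unit in $\OO_S(S) = k$.
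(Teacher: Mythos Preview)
Your final step contains a genuine error. You claim that since $g$ is regular on $U$, the coefficients $a_i$ in $\div_S(g) = \sum_i a_i C_i$ are all nonnegative, and applying the same to $g^{-1}$ gives $a_i \le 0$. But the $C_i$ are the irreducible components of $H = S \setminus U$; they are \emph{not} contained in $U$. Regularity of $g$ on $U$ tells you only that $\div_S(g)$ has no component lying in $U$ --- which is exactly the statement that $\div_S(g)$ is supported on $H$, something you already used. It gives no sign constraint on the $a_i$ whatsoever. A baby example shows this instantly: take $\PP^1$ with $H = \{0\} + \{\infty\}$ and $U = \Gm$; the function $t$ is a unit on $U$, yet $\div(t) = [0] - [\infty]$ has coefficients of both signs. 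So the ``effective and anti-effective'' dichotomy is simply false, and the argument collapses.

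The earlier part of your proposal, where you start intersecting $\sum a_i [C_i] = 0$ with various classes, is closer to a correct approach but you abandon it before extracting anything. The paper does precisely this kind of intersection-theoretic case analysis: one distinguishes whether $H$ has one, two, or three irreducible components (a line and a conic, two lines, three lines, etc.), writes down the self-intersections and mutual intersections of the $C_i$ explicitly (e.g.\ $L^2 = -1$ for a line, $L_1 \cdot L_2 = 1$ for two coplanar lines), and intersects the relation $\sum a_i [C_i] = 0$ with each $[C_j]$ and with $[H]$ to obtain a small linear system forcing all $a_i = 0$. The single-component case is immediate (a nonzero multiple of an irreducible curve is never principal on a surface with torsion-free Picard group), but the multi-component cases genuinely require the numerics; you cannot shortcut them with a sign argument.
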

\begin{proof}
	To prove the result, we may assume that $k$ is algebraically closed.
	Suppose that there is a non-constant function $f \in \OO(U)^*$ and consider 
	$D = \div f$. As $f$ is invertible on $U$, we see that $D$ is supported on the
	boundary $H$. Write $D = \sum_{i \in I} a_i D_i$ as a sum of irreducible divisors,
	for some index set $I$, where $a_i \neq 0$. We shall consider the various possibilities 
	for the $D_i$.
	
	If $\# I = 1$ then we find that either $D$ or $-D$ is a non-zero principal effective divisor; this is clearly
	a contradiction. If $\# I = 2$, we have either
	\begin{enumerate}
		\item $D_1$ and $D_2$ are both lines.
		\item $D_1$ is a line and $D_2$ is an irreducible conic.
	\end{enumerate}
	In the first case we have $D_1^2 = D_2^2 = -1$ and $D_1\cdot D_2 = 1$.
	As $D$ is principal we find that
	$$ 0 = D \cdot D_1 = -a_1 + a_2, \quad 0 = D \cdot D_2 = a_2 - a_1, \quad 0 = D \cdot H = a_1 + a_2.$$
	This is clearly a contradiction.
	In the second case we have $D_1^2 = -1, D_2^2= 0$ and $D_1 \cdot D_2 = 2$. 
	We find that
	$$ 0 = D \cdot D_1 = -a_1 + 2a_2, \quad 0 = D \cdot D_2 = 2a_1,$$	
	which again gives a contradiction.
	
	If $\#I = 3$ then the $D_i$ are all lines. A similar argument to the above also
	gives a contradiction. We deduce that $I = \emptyset$, so that $D=0$ and so 
	$f \in k^*$.
\end{proof}

\begin{lemma} \label{lem:blow-up}
	Let $S$ be a smooth surface over a field $k$ of characteristic $0$.
	Let $\pi:S' \to S$ be the blow-up of $S$ in a rational point $P$ with exceptional curve $E$.
	Then inclusion and pull-back via $\pi$ induces isomorphisms 
	$$\Br (S' \setminus E) \cong \Br S' \cong \Br S \cong \Br (S\setminus P).$$
\end{lemma}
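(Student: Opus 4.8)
The plan is to establish the four-term chain by proving each isomorphism separately, using standard facts about Brauer groups of smooth varieties in characteristic $0$: namely that $\Br$ is a birational invariant of smooth proper varieties, that it is insensitive to removing a closed subset of codimension $\geq 2$ (purity), and that it behaves controllably under removing a divisor when one understands the residue maps.

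First I would treat $\Br S' \cong \Br S$. Since $\pi: S' \to S$ is a blow-up at a rational point of a smooth surface over a field of characteristic $0$, both $S$ and $S'$ are smooth; moreover $\pi$ is a birational morphism, so pull-back $\pi^*: \Br S \to \Br S'$ is an isomorphism. (If $S$ is proper this is the classical birational invariance of the Brauer group of smooth proper varieties; in general one reduces to the local/affine situation around $P$, where the blow-up only modifies $S$ in a neighbourhood of $P$, and uses that the Brauer group of a regular scheme injects into that of its function field, together with the fact that $\pi$ is an isomorphism away from $E$ and $E \mapsto P$ has codimension $\geq 2$ in $S$ — so the residues along $E$ are forced to vanish.)

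Next, $\Br S \cong \Br (S \setminus P)$ and $\Br S' \cong \Br (S' \setminus E)$ by absolute cohomological purity / the Grothendieck purity theorem for the Brauer group: removing a point $P$ from a smooth surface removes a subset of codimension $2$, so the restriction map $\Br S \to \Br(S\setminus P)$ is an isomorphism (there is no room for a residue). For $\Br S' \cong \Br(S' \setminus E)$ the exceptional curve $E$ is a divisor, so here one uses the Gysin/residue sequence: the residue of a class $\alpha \in \Br(S'\setminus E)$ along $E$ lives in $\HH^1(k(E), \QQ/\ZZ)$, but $E \cong \PP^1_k$, and every class in $\Br(S' \setminus E)$ restricts to a class in $\Br(\text{generic point})$ whose residue along $E$ must be unramified on $\PP^1_k$ minus the point where it meets the strict transforms — combined with the identification $\Br S' \cong \Br S \cong \Br(S\setminus P) = \Br((S'\setminus E))|_{\text{away from } E}$ one sees the residue must be trivial, so $\Br S' \to \Br(S'\setminus E)$ is an isomorphism. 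Chaining these gives $\Br(S'\setminus E) \cong \Br S' \cong \Br S \cong \Br(S\setminus P)$, and since $S' \setminus E \cong S \setminus P$ via $\pi$, the outer two terms even agree tautologically, which is a useful consistency check.

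The main obstacle I anticipate is handling the case where $S$ is \emph{not} assumed proper: the cleanest statements of birational invariance and purity for the Brauer group (as opposed to the cohomological Brauer group $\HH^2_{\text{ét}}(-,\Gm)_{\text{tors}}$) are usually phrased for smooth varieties over a field without properness, but one must be slightly careful that $\Br = \HH^2(-,\Gm)$ here really does satisfy purity in codimension $\geq 2$ and that the residue sequence along a smooth divisor is available — this is fine in characteristic $0$ by Grothendieck's results in "Le groupe de Brauer II, III", but it is the step where one must cite precisely. The rest is formal diagram-chasing once the three individual isomorphisms are in hand.
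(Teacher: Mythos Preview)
Your argument has a genuine gap in the step $\Br S' \cong \Br(S' \setminus E)$. The residue of a class in $\Br(S'\setminus E)$ along $E$ lands in $\HH^1(E,\QQ/\ZZ)$, and for $E \cong \PP^1_k$ this group is $\HH^1(k,\QQ/\ZZ)$, which is typically \emph{non-zero}. So you cannot conclude that the residue vanishes merely from $E \cong \PP^1_k$; the sentence about ``unramified on $\PP^1_k$ minus the point where it meets the strict transforms'' does not make sense in this setup (there are no distinguished strict transforms in the statement), and the attempt to salvage it by invoking the other isomorphisms is circular as written, since an abstract isomorphism of groups does not tell you that the specific restriction map $\Br S' \to \Br(S'\setminus E)$ is surjective.

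The fix is exactly the observation you relegate to a ``consistency check'': $\pi$ restricts to an isomorphism $S'\setminus E \xrightarrow{\sim} S\setminus P$, so $\Br(S'\setminus E) \cong \Br(S\setminus P)$ tautologically. This is the central step in the paper's proof, not an afterthought. One then chains the injections
\[
\Br S' \hookrightarrow \Br(S'\setminus E) \cong \Br(S\setminus P) = \Br S \hookrightarrow \Br S',
\]
where the first arrow is restriction (injective since $S'$ is smooth and $\Br$ injects into the Brauer group of the function field), the equality is Grothendieck purity in codimension~$2$, and the last arrow is $\pi^*$ (injective for the same reason). The composite is the identity on $\Br S'$ because every map commutes with restriction to the common function field, so all arrows are isomorphisms. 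This avoids both the residue computation along $E$ and any separate invocation of birational invariance for the possibly non-proper $S$.
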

\begin{proof}
	Let $\pi:S' \to S$ be the blow-up map and $E$ the exceptional curve of the
	blown-up point $P$. Then we have
	$$\Br S' \subset \Br (S' \setminus E) \cong \Br (S \setminus P) = \Br S,$$
	by Grothendieck's purity theorem. However pull-back clearly gives $\Br S \subset \Br S'$,
	as required.
\end{proof}

\subsection{Cubic surfaces with $3$ coplanar lines}
\subsubsection{The conic bundle}
Let $S$ be a smooth cubic surface over a field $k$ with a line $L$. 
Let $H$ be a hyperplane containing $L$. Then $S \cap H = L \cup C$,
where $C$ is a (possibly singular) plane conic. Varying $H$ we obtain
a conic bundle structure
$$\pi: S \to \PP^1$$
on $S$. This is not an arbitrary conic bundle: the line $L$ gives a \emph{degree
$2$ multisection} of $\pi$, i.e.~the induced map $\pi_L:L \to \PP^1$ has degree $2$.
Moreover $\pi$ has $5$ singular fibres over $\bar{k}$, each given as the union
of $2$ lines meeting in a single point.

Let $P \in \PP^1_k$ be a closed point such that $\pi^{-1}(P)$ is singular;
this consists of $2$ lines over the algebraic closure of the residue field
$\kappa(P)$ of $P$. We define the \emph{residue} of $\pi$ at $P$ to be
the class in $\kappa(P)^{*2}/\kappa(P)^* = \HH^1(k,\ZZ/2\ZZ)$ corresponding to the splitting
field of these irreducible components.

\subsubsection{Brauer group}
There has been much work already  on the Brauer groups of affine cubic surfaces when the hyperplane section $H$ is \emph{smooth}; here the Brauer group is closely related to the torsion points on the Jacobian of $H$ (see for example \cite{CTW12} or \cite{BL17}).

We shall be interested in the case where the hyperplane $H$ is \emph{singular}. We focus on the case where $H$ is given by $3$ coplanar lines, though similar results also hold when $H$ is a union of a line and smooth plane conic.

To help calculate the Brauer group of the surface, we will require
the following version of the Gysin sequence (see \cite[Cor.~VI.5.3]{Mil80} or
\cite[Thm.3.4.1]{CT95}).

\begin{lemma}[Gysin sequence] \label{lem:Gysin}
	Let $X$ be a smooth variety over a field $k$ of characteristic $0$ and 
	$Z \subset X$ a smooth divisor. Let  $n \in \NN$ and $U=X \setminus Z$. Then there is an exact sequence
	\begin{align*}
	0 &\to \HH^{1}(X,\mu_n) \to \HH^{1}(U,\mu_n) \to \HH^0(Z,\ZZ/n\ZZ) \to\\
	&\to \HH^{2}(X,\mu_n) \to \HH^{2}(U,\mu_n) \to \HH^{1}(Z,\ZZ/n\ZZ) \to \cdots
	\end{align*}
\end{lemma}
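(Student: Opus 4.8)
The plan is to obtain the sequence by combining the localization long exact sequence in \'etale cohomology with cohomological purity for a smooth divisor. Write $i\colon Z\hookrightarrow X$ and $j\colon U\hookrightarrow X$ for the closed and open immersions. Since $\chr k=0$, the sheaf $\mu_n$ is locally constant on $X$, and the distinguished triangle $i_*Ri^!\mu_n\to\mu_n\to Rj_*j^*\mu_n\to i_*Ri^!\mu_n[1]$ gives the localization sequence
$$\cdots\to \HH^i_Z(X,\mu_n)\to \HH^i(X,\mu_n)\to \HH^i(U,\mu_n)\to \HH^{i+1}_Z(X,\mu_n)\to\cdots.$$
The whole point is then to identify the local cohomology groups $\HH^i_Z(X,\mu_n)$.

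Because $X$ is smooth and $Z$ is a \emph{smooth} divisor — hence of pure codimension one — cohomological purity (\cite[Cor.~VI.5.3]{Mil80}, \cite[Thm.~3.4.1]{CT95}) supplies canonical isomorphisms $\HH^i_Z(X,\mu_n)\cong \HH^{i-2}(Z,\ZZ/n\ZZ)$; the Tate twist produced by purity is $\mu_n^{\otimes(1-1)}=\ZZ/n\ZZ$ exactly because the codimension equals one, so no twist survives on $Z$. In particular $\HH^i_Z(X,\mu_n)=0$ for $i\le 1$, while $\HH^2_Z(X,\mu_n)\cong\HH^0(Z,\ZZ/n\ZZ)$, $\HH^3_Z(X,\mu_n)\cong\HH^1(Z,\ZZ/n\ZZ)$, and so on. Substituting these into the localization sequence, the vanishing in degrees $\le 1$ produces the injection $\HH^1(X,\mu_n)\hookrightarrow\HH^1(U,\mu_n)$ at the left end and turns the connecting homomorphisms into the residue maps $\HH^i(U,\mu_n)\to\HH^{i-1}(Z,\ZZ/n\ZZ)$; this is precisely the asserted exact sequence.

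The localization sequence is formal, so the only real content — and the step needing care — is the purity isomorphism. For a smooth divisor in characteristic zero this is, however, elementary: \'etale locally $Z=\{t=0\}$ is principal, and Kummer theory on the resulting strictly henselian discrete valuation rings shows that $R^{\bullet}j_*\mu_n$ is concentrated in degrees $0$ and $1$, with $R^0j_*\mu_n=\mu_n$ and $R^1j_*\mu_n\cong i_*(\ZZ/n\ZZ)$. The Leray spectral sequence for $j$ then reduces to a two-row spectral sequence, whose associated long exact sequence — using $\HH^j(X,i_*\ZZ/n\ZZ)=\HH^j(Z,\ZZ/n\ZZ)$ — is exactly the stated Gysin sequence. (Alternatively one may base change to $\CC$ and compare with the topological Gysin sequence.) In every version the only hypotheses used are those in the statement: $X$ smooth, $Z$ a smooth divisor, and $n$ coprime to $\chr k$.
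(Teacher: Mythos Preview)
Your proof is correct and is precisely the standard argument: localization long exact sequence plus cohomological purity for a smooth codimension-one closed subscheme, with the alternative description via the Leray spectral sequence for $j$ and the identification $R^1j_*\mu_n\cong i_*(\ZZ/n\ZZ)$. The paper itself does not give a proof of this lemma at all---it simply records the statement and cites \cite[Cor.~VI.5.3]{Mil80} and \cite[Thm.~3.4.1]{CT95}, which contain exactly the argument you have written out; so there is nothing to compare beyond noting that you have supplied the details the paper deferred to the literature.
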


Our first result calculates the Brauer group over the algebraic closure as a module for the absolute Galois group $G_k = \Gal(\bar{k}/k)$. In the statement, we let $\mu_\infty = \varinjlim \mu_n$ be the direct limit of all groups $\mu_n$ of $n$th
roots of unity in $\CC$. Moreover, recall that  an \emph{Eckardt point} on a cubic surface is a point where three lines meet.

\begin{proposition} \label{prop:trans_Galois}
	Let $S$ be a smooth cubic surface over a field $k$ of characteristic $0$.
	Let $H \subset S$ be a hyperplane section which is the union of $3$ lines $L_1,L_2,L_3$ and let $U = S \setminus H$.
	Then as $G_k$-modules we have 
	$$\Br U_{\bar{k}} \cong 
	\begin{cases}
		0, & \text{if $L_1,L_2,L_3$ meet in an Eckardt point},\\
		\QQ/\ZZ(-1):=\Hom(\mu_\infty, \QQ/\ZZ), & \text{otherwise}.
	\end{cases}
	$$
	In particular, if $k$ contains no non-trivial roots of unity and $L_1,L_2,L_3$
	do not meet in an Eckardt point then
	$$(\Br U_{\bar{k}})^{G_k} \cong \ZZ/2\ZZ.$$
\end{proposition}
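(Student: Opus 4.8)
The plan is to obtain the final statement as a formal consequence of the main part of the proposition, which identifies $\Br U_{\bar k}$ with $\QQ/\ZZ(-1) = \Hom(\mu_\infty, \QQ/\ZZ)$ as a $G_k$-module. Thus everything reduces to computing the group of Galois invariants $(\QQ/\ZZ(-1))^{G_k}$, under the assumption that the only roots of unity in $k$ are $\pm 1$ (note $-1 \in k$ always, so this is the correct reading of the hypothesis; equivalently, $\mu_n$ is fixed by $G_k$ precisely when $n \mid 2$).

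First I would pass to torsion: write $\QQ/\ZZ(-1) = \varinjlim_n \tfrac1n\ZZ/\ZZ(-1)$, where $\tfrac1n\ZZ/\ZZ(-1) \cong \Hom(\mu_n, \ZZ/n\ZZ)$ is cyclic of order $n$, and since $G_k$-invariants commute with this direct limit it is enough to understand the invariants of each term. Because $\ZZ/n\ZZ$ carries the trivial action, $G_k$ acts on $\Hom(\mu_n,\ZZ/n\ZZ)$ through the inverse of the mod $n$ cyclotomic character.

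The key point is then: an element $\phi \in \QQ/\ZZ(-1)$ of exact order $d$ generates a cyclic $G_k$-submodule isomorphic to $\tfrac1d\ZZ/\ZZ(-1) \cong \Hom(\mu_d,\ZZ/d\ZZ)$, so $\phi$ is fixed by $G_k$ if and only if $G_k$ acts trivially on this submodule, which holds if and only if $G_k$ acts trivially on $\mu_d$, i.e. $\mu_d \subseteq k$. The set of integers $d$ with $\mu_d\subseteq k$ is closed under divisors and least common multiples (inside $\QQ/\ZZ \cong \mu_\infty$ one has $\mu_a + \mu_b = \mu_{\mathrm{lcm}(a,b)}$), so it consists of the divisors of some $N$; by hypothesis $N = 2$. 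Hence $(\QQ/\ZZ(-1))^{G_k}$ is exactly the $2$-torsion subgroup $\tfrac12\ZZ/\ZZ \cong \ZZ/2\ZZ$, which is the claim.

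I do not anticipate a serious obstacle; the only matters requiring care are bookkeeping — keeping track of the direction of the Tate twist (so that the relevant characters are the inverse cyclotomic ones, though these have the same kernels as the cyclotomic characters, which is all that matters here), and correctly interpreting ``no non-trivial roots of unity'' as $\mu(k) = \{\pm 1\}$. An alternative, essentially equivalent, route is to compute directly the elements of $\ZZ/n\ZZ$ fixed under multiplication by every unit in the image of the cyclotomic character $G_k \to (\ZZ/n\ZZ)^\times$; the order-of-torsion argument above just packages this more cleanly.
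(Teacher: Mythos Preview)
Your argument for the ``In particular'' statement is correct and carefully done, but you have not proved the proposition: you have taken the main assertion --- that $\Br U_{\bar k} \cong \QQ/\ZZ(-1)$ as $G_k$-modules (and vanishes in the Eckardt case) --- as an input, when in fact that is the entire content. The proposition is a geometric computation; the closing sentence is a formal corollary, and the paper does not even spell it out.

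What is missing is the identification of $\Br U_{\bar k}$. The paper's approach is to blow down one of the lines, say $L_3$, to obtain a del Pezzo surface $S'$ of degree $4$ on which the images $C_1, C_2$ of $L_1, L_2$ are smooth conics; by Lemma~\ref{lem:blow-up} this does not change the Brauer group of the complement. One then removes $C_1$ and $C_2$ in two steps, applying the Gysin sequence (Lemma~\ref{lem:Gysin}) each time. The first removal produces nothing new because $\HH^1(\PP^1_{\bar k},\ZZ/n\ZZ)=0$; the second gives a Galois-equivariant isomorphism
\[
(\Br U_{\bar k})[n] \;\cong\; \HH^1\bigl((C_2 \setminus C_1)_{\bar k},\, \ZZ/n\ZZ\bigr),
\]
after comparing with Kummer sequences and using $\Br S'_{\bar k}=0$. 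The dichotomy in the statement then comes from the shape of the curve $C_2 \setminus C_1$: it is $\AA^1$ in the Eckardt case (so $\HH^1=0$) and $\Gm$ otherwise (so $\HH^1(\Gm_{\bar k},\ZZ/n\ZZ)\cong \Hom(\mu_n,\ZZ/n\ZZ)$, yielding $\QQ/\ZZ(-1)$ in the limit). None of this is present in your proposal; once you supply it, your invariants computation finishes the job.
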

\begin{proof}
	Blow down $L_3$ to obtain a del Pezzo surface $\psi: S \to S'$ of degree $4$
	and let $C_i = \psi(L_i)$. Then by Lemma \ref{lem:blow-up} 
	we have $\Br (S\setminus H) \cong \Br (S' \setminus (C_1 \cup C_2))$.
	The curves $C_1$ and $C_2$ are smooth conics on $S'$. Moreover, we
	have $-K_{S'} = C_1 + C_2$ in $\Pic S'$ and 
	$$ \#(C_1 \cap C_2)=	\begin{cases}
		1, & \text{if $L_1,L_2,L_3$ meet in an Eckardt point},\\
		2, & \text{otherwise}.
	\end{cases}$$
	Let $U_1 = S' \setminus C_1$ and $U_2 = U_1 \setminus C_2$.
	We apply the Gysin sequence from Lemma~\ref{lem:Gysin} to $(S',U_1)$
	to find the
		exact sequence
	\begin{align*}
		\HH^{2}(S'_{\bar{k}},\mu_n) \to \HH^{2}(U_{1,\bar{k}},\mu_n) \to \HH^{1}(C_{1,\bar{k}},\ZZ/n\ZZ).
	\end{align*}
	As $C_1 \cong \PP^1$ we have $\HH^{1}(C_{1,\bar{k}},\ZZ/n\ZZ) = 0$, hence
	the	map $\HH^{2}(S'_{\bar{k}},\mu_n) \to \HH^{2}(U_{1,\bar{k}},\mu_n)$ is surjective. 	Applying Gysin to $(S',U_1)$ again we obtain
	\begin{align*}
	\HH^3(S'_{\bar{k}}, \mu_n) \to \HH^3(U_{1,\bar{k}}, \mu_n) \to \HH^2(C_{1,\bar{k}}, \ZZ/n\ZZ) \to \HH^4(S'_{\bar{k}}, \mu_n)
		\to \HH^4(U_{1,\bar{k}}, \mu_n).
	\end{align*}
	As $U_1$ is non-proper we have $\HH^4(U_{1,\bar{k}}, \mu_n) = 0$. The map 
	$\HH^2(C_{1,\bar{k}}, \ZZ/n\ZZ) \to \HH^4(S'_{\bar{k}}, \mu_n)$ is therefore an isomorphism as both groups have the same cardinality. As 
	$\HH^3(S'_{\bar{k}}, \mu_n) = 0$, it follows that $\HH^3(U_{1,\bar{k}}, \mu_n) = 0$.
	We now apply the Gysin sequence to $(U_1,U_1 \cap C_2)$ to find
	$$\HH^{2}(U_{1,\bar{k}},\mu_n) \to \HH^{2}(U_{2,\bar{k}},\mu_n) 
	\to \HH^{1}(U_{1,\bar{k}} \cap C_{2,\bar{k}},\ZZ/n\ZZ) \to \HH^3(U_{1,\bar{k}}, \ZZ/n\ZZ) = 0.$$
	From the Kummer sequence we have the commutative diagram
	with exact rows
	$$
	\xymatrix{0 \ar[r] & (\Pic S'_{\bar{k}})/n \ar[r] \ar[d]^{f} &	\HH^2(S'_{\bar{k}},\mu_n)  \ar[d] \ar[r]& (\Br S'_{\bar{k}})[n]  \ar[d] \ar[r] & 0   \\
	0 \ar[r] & (\Pic U_{2,\bar{k}})/n \ar[r]&	\HH^2(U_{2,\bar{k}},\mu_n) \ar[r]& (\Br U_{2,\bar{k}})[n] \ar[r] & 0.}
	$$
	As $S'$ is smooth the map $f$ is surjective. Moreover, 
	$\Br S'_{\bar{k}} = 0$ as $S'$ is a smooth projective geometrically
	rational surface.
	As the map $\HH^{2}(S'_{\bar{k}},\mu_n) \to \HH^{2}(U_{1,\bar{k}},\mu_n)$
	is surjective, we therefore deduce the Galois equivariant isomorphism 
	$$(\Br U_{2,\bar{k}})[n] \cong \HH^{1}(U_{1,\bar{k}} \cap C_{2,\bar{k}},\ZZ/n\ZZ).$$
	We may now complete the proof.
	If $L_1,L_2,L_3$ meet in an Eckardt point then $U_1 \cap C_2 \cong \AA^1$, thus
	$\HH^{1}(U_{1,\bar{k}} \cap C_{2,\bar{k}},\ZZ/n\ZZ) = 0$.
	Otherwise we have $U_1 \cap C_2 \cong \Gm$. In this case 
	Kummer theory yields the isomorphism  
	$\HH^{1}(\mathbb{G}_{m,\bar{k}},\mu_n)
	= \ZZ/n\ZZ$, thus twisting  coefficients shows that 
	$\HH^{1}(\mathbb{G}_{m,\bar{k}},\ZZ/n\ZZ) = \Hom(\mu_n,\ZZ/n\ZZ)$.
	The result now follows on applying
	this to all $n$, as the Brauer group is torsion.
\end{proof}

\begin{proposition} \label{prop:list}
	Let $S$ be a smooth cubic surface over a field $k$ of characteristic $0$.
	Let $H \subset S$ be a hyperplane section which is the union of $3$ lines $L_1,L_2,L_3$ and let $U = S \setminus H$. Then $\Pic U_{\bar{k}}$ is torsion
	free and $\Br_1 U / \Br k \cong \HH^1(k, \Pic U_{\bar{k}})$ is isomorphic
	to one of the following groups
	\[ 0, \, \ZZ/4\ZZ, \, \ZZ/2\ZZ \times \ZZ/4\ZZ, \, (\ZZ/2\ZZ)^r \, (r=1,2,3,4). \]
\end{proposition}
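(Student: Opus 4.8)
The plan is to identify $\Pic U_{\bar k}$, together with its $G_k$-action, as precisely as possible, and then read off both assertions. Since $U = S\setminus H$ with $H = L_1\cup L_2\cup L_3$, pulling back divisors from $S$ gives an exact sequence $\bar k[U]^*/\bar k^* \to \bigoplus_i \ZZ L_i \to \Pic S_{\bar k} \to \Pic U_{\bar k}\to 0$, and by Lemma~\ref{lem:no_functions} the first term vanishes, so
\[
0 \to N \to \Pic S_{\bar k} \to \Pic U_{\bar k} \to 0, \qquad N := \ZZ L_1\oplus\ZZ L_2\oplus\ZZ L_3 \cong \ZZ^3,
\]
is exact as $G_k$-modules. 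Thus $\Pic U_{\bar k}$ is torsion-free iff $N$ is primitive in $\Pic S_{\bar k}\cong\ZZ^7$, which I would check over $\bar k$: every trihedron of coplanar lines is $W(E_6)$-equivalent to $\{E_1,\,H-E_1-E_2,\,2H-E_1-E_3-E_4-E_5-E_6\}$ in a geometric basis $H,E_1,\dots,E_6$, and killing these three classes identifies $E_1=0$, $H=E_2$, $E_3+E_4+E_5+E_6=2E_2$, the last being primitive; so $\Pic U_{\bar k}$ is free of rank $4$. (Equivalently, blow down $L_3$ as in the proof of Proposition~\ref{prop:trans_Galois} and use that a pair of conics summing to $-K$ on a quartic del Pezzo spans a primitive sublattice.) Moreover the intersection form restricted to $N$ has signature $(1,2)$ and discriminant $4$, so $N^{\perp}$ in the unimodular lattice $\Pic S_{\bar k}$ is negative definite of rank $4$ and discriminant $4$; a direct check in the model above gives $N^{\perp}\cong D_4$, and since $N\cap N^{\perp}=0$ the composite $N^{\perp}\hookrightarrow\Pic S_{\bar k}\to\Pic U_{\bar k}$ is injective of index $4$, whence $\Pic U_{\bar k}\cong D_4^{*}$ as a lattice. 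As any $w\in W(E_6)$ stabilising the trihedron and acting trivially on $\Pic U_{\bar k}$ must fix $N^{\perp}$ pointwise and hence be trivial, the stabiliser of the trihedron (of order $|W(E_6)|/45 = 1152$) injects into $\Aut(D_4^{*})=W(F_4)$, also of order $1152$; so $G_k$ acts on $\Pic U_{\bar k}$ through a subgroup of $W(F_4)$.

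For the formula $\Br_1 U/\Br k \cong \HH^{1}(k,\Pic U_{\bar k})$: since $\bar k[U]^{*}=\bar k^{*}$ by Lemma~\ref{lem:no_functions}, the Hochschild--Serre spectral sequence $\HH^{p}(k,\HH^{q}(U_{\bar k},\Gm))\Rightarrow\HH^{p+q}(U,\Gm)$ yields the standard exact sequence
\[
\Br k \to \Br_1 U \to \HH^{1}(k,\Pic U_{\bar k}) \to \HH^{3}(k,\Gm)
\]
(see e.g.\ \cite{CT95}), and one checks the outer maps cause no trouble (the left map is split injective, e.g.\ via a rational point, and $\HH^{3}(k,\Gm)$ vanishes over the fields at hand). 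Combined with the previous paragraph this reduces the proposition to computing $\HH^{1}(\Gamma,\Pic U_{\bar k})$ as $\Gamma$ ranges over subgroups of $W(F_4)$; passing from $G_k$ to its finite image $\Gamma$ loses nothing, since $\HH^{1}$ of the kernel with coefficients in a lattice vanishes, so inflation is an isomorphism.

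It therefore remains to compute $\HH^{1}(\Gamma,D_4^{*})$ over the conjugacy classes of subgroups $\Gamma\le W(F_4)$, which I would run on a machine (\texttt{magma}); the outcome is precisely $0$, $\ZZ/4\ZZ$, $\ZZ/2\ZZ\times\ZZ/4\ZZ$, and $(\ZZ/2\ZZ)^{r}$ for $r=1,2,3,4$. One can cut down the bookkeeping by splitting according to the image $\bar\Gamma\le S_3$ of $\Gamma$ on $\{L_1,L_2,L_3\}$: when $\bar\Gamma=1$ the group $\Gamma$ lies in the obvious $S_4\subset W(D_4)\subset W(F_4)$, and the long exact sequence attached to $0\to\ZZ^{4}\to\Pic U_{\bar k}\to\ZZ/2\ZZ\to 0$ together with $\HH^{1}(\Gamma,\ZZ^{4})=0$ (Shapiro, $\ZZ^4$ being a permutation module) forces $\HH^{1}(\Gamma,\Pic U_{\bar k})\hookrightarrow\Hom(\Gamma,\ZZ/2\ZZ)$, hence elementary abelian of rank $\le 2$; the $4$-torsion and the larger $2$-torsion in the list occur only when $\bar\Gamma\ne 1$.

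I expect the third step to be the real obstacle — not the routine cohomology, but pinning down that $G_k$ genuinely acts through $W(F_4)$ via the identification $\Pic U_{\bar k}\cong D_4^{*}$, and then verifying that the cohomology never produces $3$-torsion or invariant factors beyond those listed. The absence of $3$-torsion, even though the rational Galois representation carries a two-dimensional faithful summand for triality-type elements of $W(F_4)$, is a genuinely integral phenomenon coming from the "gluing" in $D_4^{*}$ between its triality-fixed and triality-moving sublattices, and is exactly what the explicit computation has to confirm.
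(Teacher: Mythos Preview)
Your main argument is correct and takes a genuinely different route from the paper. The paper stays inside $W(E_6)$: it enumerates the $350$ conjugacy classes of subgroups, isolates the $48$ that stabilise some trihedron, and computes $\HH^1(G,\Pic\bar U)$ for each in \texttt{magma} (torsion-freeness of $\Pic\bar U$ is also left to the machine). You instead identify $\Pic U_{\bar k}\cong D_4^*$ explicitly (via primitivity of $N$ and $N^\perp\cong D_4$, which you verify by hand in a standard blow-up basis), observe that the trihedron stabiliser has order $|W(E_6)|/45=1152=|\Aut(D_4^*)|=|W(F_4)|$ and acts faithfully on $\Pic U_{\bar k}$, and then run the cohomology over subgroups of $W(F_4)$. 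This buys a cleaner lattice-theoretic picture and an honest torsion-freeness proof; the paper's approach is more of a direct search. Both ultimately defer the final enumeration to \texttt{magma}, and both are equally loose about the Hochschild--Serre step (the paper also silently invokes $U(k)\neq\emptyset$).

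Your final ``bookkeeping'' paragraph, however, contains a real error. When $\bar\Gamma=1$ the image $\Gamma$ lies in $W(D_4)=\ker(W(F_4)\to S_3)$ of order $192$, not in $S_4$: the pointwise stabiliser of $N$ in $W(E_6)$ is generated by the reflections in roots of $N^\perp\cong D_4$. Under $W(D_4)$ the sublattice $\ZZ^4\subset D_4^*$ carries the signed-permutation action (even number of sign changes), not a permutation action, so $\HH^1(\Gamma,\ZZ^4)$ need not vanish and your claimed embedding $\HH^1(\Gamma,\Pic U_{\bar k})\hookrightarrow\Hom(\Gamma,\ZZ/2\ZZ)$ fails. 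Indeed the conclusion itself is false: the generic Markoff surface of Proposition~\ref{prop:Br1 Um} has all three $L_i$ defined over $k$ (so $\bar\Gamma=1$) yet $\Br_1 U/\Br k\cong(\ZZ/2\ZZ)^3$, contradicting your ``rank $\le 2$'' bound. This does not damage the main proof, which rests on the full \texttt{magma} enumeration, but the paragraph should be dropped. If you want a structural replacement, the paper's Remark~\ref{rem:magma} gives a short direct argument that $\Br_1 U/\Br k$ is always $4$-torsion.
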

\begin{proof}
	The isomorphism $\Br_1 U / \Br k \cong \HH^1(k, \Pic U_{\bar{k}})$ is a well-known
	consequence of the fact that $U(k) \neq \emptyset$, $\OO(U_{\bar{k}})^* = \bar{k}^*$
	(Lemma \ref{lem:no_functions}) and the Hochshild--Serre spectral sequence
	(see e.g.~\cite[Lem.~6.3(iii)]{San81}). 
	We have the exact sequence
	\begin{equation} \label{seq:Pic}
		0 \to \ZZ^3 \overset{i}{\to} \Pic \bar{S} \to \Pic \bar{U} \to 0
	\end{equation}
	of $G_k$-modules, where $i(n_1,n_2,n_3) = n_1[L_1] + n_2[L_2] + n_3[L_3]$.
	Moreover, we have the exact sequence
	$$0 \to \PDiv \bar{S} \to \Lines \bar{S} \to \Pic \bar{S} \to 0,$$
	where $\Lines \bar{S}$ is the free abelian group generated by the $27$
	lines of $\bar{S}$ and $\PDiv \bar{S}$ is the subgroup of principal
	divisors. These two sequences give an explicit description of 
	$\Pic \bar{U}$ as a quotient of a permutation module by a submodule.
	Moreover, the absolute Galois group $\Gal(\bar{k}/k)$
	acts on $\Lines \bar{S}$ via a subgroup of the Weyl group
	$W(E_6)$, which is well-defined	up to conjugacy.
	
	This data can all be fed into \texttt{magma}.
	One enumerates all $350$ conjugacy classes of subgroups of $W(E_6)$
	together with the corresponding action on $\ZZ^{27}$. One finds
	that $48$ of these subgroups correspond to smooth cubic
	surfaces with $3$ coplanar lines. One then constructs $\Pic \bar{U}$
	together with the action of the Galois group $G$
	using the above sequences and computes
	$\HH^1(G, \Pic \bar{U})$ with standard commands in \texttt{magma}.
	(Note that, by inflation-restriction 
	$\HH^1(k, \Pic \bar{U}) =\HH^1(G, \Pic \bar{U})$, as
	one checks in \texttt{magma} that $\Pic \bar{U}$ is
	a free $\ZZ$-module).
	This gives the possible list of Brauer groups stated in the proposition.
\end{proof}

\begin{remark}
It is interesting to see the group $\ZZ/4\ZZ$ occurring here. This occurs for a unique  group action on the lines by a group of order $8$. The  group $\ZZ/2\ZZ \times \ZZ/4\ZZ$ occurs for a unique group action by a group of order $4$.
\end{remark}

\begin{remark} \label{rem:magma}
	There is a direct proof, without \texttt{magma},
	that every element
	of $\Br_1 U / \Br k$ is $4$-torsion. 
	Consider the exact sequence \eqref{seq:Pic} and define the map
	$$j: \Pic \bar{S} \to \ZZ^3, \quad j: [D] \mapsto ([D] \cdot ([L_2] + [L_3]), [D] \cdot ([L_1] + [L_3]), [D] \cdot ([L_1] + [L_2])). $$
	A simple calculation shows that $j\circ i$
	is multiplication by $2$ on $\ZZ^3$.
	We now apply cohomology to obtain the exact sequence
	$$0 \to \Br_1 S / \Br k \to \Br_1 U / \Br k \to \HH^2(k, \ZZ^3) \overset{i^*}{\to}
	\HH^2(k, \Pic \bar{S}),$$
	on using the vanishing $\HH^1(k, \ZZ^3)=0$. As $S$ is a conic bundle surface,
	it is well-known that $\Br_1 S / \Br k$ is $2$-torsion (this follows for
	example from \cite[Thm.~2.11]{Lou}).
	However for $b \in \Br_1 U / \Br k$, its image in $\HH^2(k, \ZZ^3)$
	is easily seen to have order $2$ since $j^*\circ i^*$ is multiplication
	by $2$ on $\HH^2(k, \ZZ^3)$. The result follows.	
\end{remark}

\begin{remark}
	The complement of a \emph{smooth} hyperplane section on a smooth cubic
	surface over an algebraically closed field always has non-trivial Brauer group
	(see e.g.~\cite[\S2.2]{BL17}).
	Proposition \ref{prop:trans_Galois} shows however that this is not the case for 
	the complement of a singular hyperplane section given by three lines meeting
	in an Eckardt point.
\end{remark}

\section{Geometry of projective Markoff surfaces} \label{sec:3}
We now consider the geometry of the natural compactifications of the Markoff
surfaces. We work over a field $k$ of characteristic $0$ and study
the cubic surfaces with the equation
\begin{equation} \label{def:Sm}
	S_m: \quad x_0(x_1^2 + x_2^2 + x_3^2) - x_1x_2x_3 = m x_0^3 \qquad \subset \PP^3,
\end{equation}
for $m \in k$. We assume throughout that $S_m$ is smooth; this is equivalent to $m \neq 0,4$.
The surfaces $S_m$ contain the three coplanar lines
$$L_i: x_0 = 0, x_i = 0.$$
In particular, each $S_m$ comes equipped with $3$ conic bundle structures. There is an obvious
action of the symmetric group of order $3$ on $S_m$ which permutes these lines. We 
focus our attention on the line $L_3$ and denote the associated conic bundle by $\pi:S_m \to \PP^1$.
Analysing the conic bundle structure, one finds that
this is given by
\begin{equation} \label{eqn:conic_bundle}
	S_m: \quad s(x^2 + y^2) - txy + s(t^2 - ms^2)z^2 = 0 \qquad \subset \FF(0,0,1)
\end{equation}
where $\FF(0,0,1) := \PP(\OO_{\PP^1} \oplus \OO_{\PP^1} \oplus\OO_{\PP^1}(1))$
is viewed as the quotient $((\mathbb{A}^2\setminus 0) \times (\mathbb{A}^{3}\setminus 0) )/ \Gm^2$ for the action
$$
	(\lambda,\mu) \cdot (s,t;x,y,z) = (\lambda s,\lambda  t; \mu x, \mu y, \lambda^{-1} \mu z).
$$
(See \cite[\S2]{FLS18} for more information about how to write
down equations for conic bundle surfaces.)
One sees that this surface is isomorphic to the original $S_m$  via the map
\begin{equation} \label{eqn:blow_up}
	\FF(0,0,1) \to \PP^3, \quad (x,y,z;s,t) \mapsto (sz,x,y,tz),
\end{equation}
which also realises $\FF(0,0,1)$ as the blow-up of $\PP^3$ in the line $L_3$.
The conic bundle map $\pi$ is given by mapping to $(s:t)$.

\begin{lemma} \label{lem:conic_bundle}
	The following holds.
	\begin{enumerate}
		\item The map $\pi: S_m \to \PP^1$ has $5$ singular geometric fibres. 
		\item The discriminant is given by
		$$\Delta(s,t) = s(t-2s)(t+2s)(t^2 - ms^2).$$
	\end{enumerate}
	Assume that $m \notin k^{*2}$.
	Let $P_1,P_2,P_3,P_4$ be the points corresponding the zero locus of $\Delta(s,t)$ in $\PP^1$.
	Let $k_i = \kappa(P_i)$ be the residue fields of the $P_i$.
	\begin{enumerate}
		\setcounter{enumi}{2}
		\item The fibre over each $P_i$ has the following residue in $k_i$.
		$$P_1: 1, \quad P_2: m-4, \quad P_3: m-4, \quad P_4: m-4.$$
	\end{enumerate}
\end{lemma}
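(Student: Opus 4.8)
The plan is to start from the explicit equation \eqref{eqn:conic_bundle} and make each singular fibre completely explicit. For $(s:t)\in\PP^1$ the fibre $\pi^{-1}(s:t)$ is the plane conic
$$C_{(s:t)}:\quad s(x^2+y^2)-txy+s(t^2-ms^2)z^2=0$$
in the $\PP^2$ with coordinates $(x:y:z)$ obtained by restricting $\FF(0,0,1)$ to the point $(s:t)$; its Gram matrix is
$$M(s,t)=\begin{pmatrix} s & -t/2 & 0\\ -t/2 & s & 0\\ 0 & 0 & s(t^2-ms^2)\end{pmatrix}.$$
A direct computation gives $\det M(s,t)=-\tfrac14\,s(t-2s)(t+2s)(t^2-ms^2)$, and since the discriminant of a conic bundle is defined only up to a nonzero scalar (and squares) this proves (2).

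For (1), the fibre $C_{(s:t)}$ is singular exactly when $\det M(s,t)=0$, i.e.\ when $\Delta(s,t)=0$. The degree-$5$ form $\Delta$ has the five zeros $(0:1)$, $(1:2)$, $(1:-2)$, $(1:\sqrt m)$, $(1:-\sqrt m)$ in $\PP^1_{\bar k}$, and these are pairwise distinct precisely because $m\neq0,4$. At each of them one checks that $M(s,t)$ has rank exactly $2$: at $(0:1)$ this is immediate; at $(1:\pm2)$ the entry $s(t^2-ms^2)=s^3(4-m)$ is nonzero while the upper-left $2\times2$ block has determinant $s^2-t^2/4=0$ yet is not the zero matrix; and at the zeros of $t^2-ms^2$ that entry vanishes but the upper-left block has determinant $s^2-t^2/4=s^2(4-m)/4\neq0$. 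Hence each of the five singular fibres is a genuine degenerate conic, i.e.\ a union of two distinct lines over $\bar k$, so $\pi$ has exactly five singular geometric fibres.

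For (3) I would substitute each $P_i$ into \eqref{eqn:conic_bundle} and factor the resulting rank-$2$ conic into its two linear components, reading off their splitting field; equivalently one may use that the conic $aX^2+bY^2=0$ (in suitable coordinates over the residue field) has residue the class of $-ab$. Since $m\notin k^{*2}$, the zero locus of $\Delta$ in $\PP^1_k$ consists of the four closed points $P_1=(0:1)$, $P_2=(1:2)$, $P_3=(1:-2)$ and $P_4$, the degree-$2$ point where $t^2=ms^2$, with residue fields $k_1=k_2=k_3=k$ and $k_4=k(\sqrt m)$. Over $P_1$ the fibre is $xy=0$, a union of two $k$-rational lines, so the residue is $1$. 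Over $P_2$ (put $s=1,\ t=2$) the fibre is $(x-y)^2+(4-m)z^2=0$, i.e.\ $(x-y)^2=(m-4)z^2$, which factors over $k(\sqrt{m-4})$, giving residue $m-4$; over $P_3$ (put $s=1,\ t=-2$) the same holds with $x-y$ replaced by $x+y$. Over $P_4$, working over $k_4=k(\sqrt m)$ and putting $s=1,\ t=\sqrt m$, the coefficient $s(t^2-ms^2)$ of $z^2$ vanishes and the fibre becomes $x^2-\sqrt m\,xy+y^2=0$, a union of two lines through $(0:0:1)$ of slopes $\alpha$ with $\alpha^2-\sqrt m\,\alpha+1=0$; this quadratic has discriminant $m-4$, so the components are defined over $k_4(\sqrt{m-4})$ and the residue is the class of $m-4$ in $k_4^*/k_4^{*2}$.

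The only delicate point is $P_4$: one must invoke the hypothesis $m\notin k^{*2}$ to identify it as the degree-$2$ point with residue field $k(\sqrt m)$, notice that there the $z^2$-term drops out so the fibre degenerates inside the $(x,y)$-plane, and confirm that the residue is genuinely the image of $m-4$, not of something involving $\sqrt m$, in $k(\sqrt m)^*/k(\sqrt m)^{*2}$. Everything else — the determinant computation, the distinctness of the five zeros, the rank-$2$ verifications, and the factorisations over $P_1,P_2,P_3$ — is routine.
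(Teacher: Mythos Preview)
Your proof is correct and follows exactly the approach the paper intends: the paper's own proof reads simply ``Follows immediately from the explicit equation \eqref{eqn:conic_bundle} and a simple calculation,'' and what you have written is precisely that calculation carried out in full --- computing the Gram matrix and its determinant for (2), checking distinctness and rank for (1), and factoring each degenerate conic to read off the residues for (3).
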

\begin{proof}
	Follows immediately from the explicit equation \eqref{eqn:conic_bundle}
	and a simple calculation.
\end{proof}

\begin{lemma} \label{lem:Br of comp}
	Assume that $[k(\sqrt{m}, \sqrt{m-4}):k] = 4$. Then
	$$\Br S_m / \Br k \cong (\ZZ/2\ZZ),$$
	with generator given by the quaternion algebra
  $$\alpha = ((x_3/x_0)^2-4, m-4).$$
\end{lemma}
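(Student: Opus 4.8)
The plan is to compute $\Br_1 S_m/\Br k$ via the conic bundle structure $\pi\colon S_m\to\PP^1$ associated to $L_3$, using the residue data from Lemma~\ref{lem:conic_bundle}, and then to separately argue that there is no transcendental part. For the algebraic part, I would invoke the standard exact sequence for a conic bundle surface $S\to\PP^1$ over $k$ with a section or, here, with a degree-$2$ multisection: $\Br_1 S/\Br k$ injects into $\bigoplus_i \HH^1(k_i,\ZZ/2\ZZ)/\langle\text{residues}\rangle$ (this is the content of \cite[Thm.~2.11]{Lou} cited in Remark~\ref{rem:magma}, or can be derived from the Faddeev sequence for $k(\PP^1)$). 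The five geometric singular fibres lie over the four closed points $P_1,\dots,P_4$ with residues $1, m-4, m-4, m-4$ in the residue fields $k_1,\dots,k_4$; since $P_1$ has residue $1$ it contributes nothing. Under the hypothesis $[k(\sqrt m,\sqrt{m-4}):k]=4$, one has in particular $m\notin k^{*2}$ (so Lemma~\ref{lem:conic_bundle}(3) applies), $P_1$ is rational, $P_4=\{(s:t):t^2=ms^2\}$ is a single closed point with $k_4=k(\sqrt m)$, and $P_2,P_3$ are the rational points $t=\pm 2s$. The reciprocity/parity constraint on residues (the sum of corestrictions of the residues is zero) together with a rank count over the places then forces $\Br_1 S_m/\Br k\cong\ZZ/2\ZZ$: the three occurrences of the class $m-4$ in $\HH^1(k,\ZZ/2\ZZ)$, $\HH^1(k,\ZZ/2\ZZ)$, $\HH^1(k(\sqrt m),\ZZ/2\ZZ)$ must be matched up consistently, and the nontrivial element is the quaternion algebra whose residues are $m-4$ at $P_2$, $P_3$ and trivial elsewhere.

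Next I would identify the generator explicitly. The conic bundle $\pi$ sends a point to $(s:t)=(x_0:x_3)$ in the original coordinates, so $t/s = x_3/x_0$ is the coordinate on the base $\PP^1$. The divisor of $(x_3/x_0)^2-4 = (t/s-2)(t/s+2)$ on $\PP^1$ is supported exactly at $P_2$ ($t=2s$) and $P_3$ ($t=-2s$) and the point at infinity $s=0$; so the quaternion algebra $\alpha=((x_3/x_0)^2-4,\,m-4)$ over the function field $k(S_m)$ has residue $m-4$ at $P_2$ and $P_3$ (where $v((x_3/x_0)^2-4)$ is odd), residue $m-4$ (times a square) or trivial at $s=0$, and trivial residue everywhere else — matching the residue profile of the nontrivial class. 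A short local computation (using that $S_m$ is smooth, so $\alpha$ extends over all of $S_m$ once one checks it is unramified — the fibre at $s=0$ and the branch locus of $\pi_{L_3}$ need to be checked) shows $\alpha\in\Br S_m$ and that its image in $\Br_1 S_m/\Br k$ is the nonzero class. One must also verify $\alpha\notin\Br k$, which follows since its residue along the fibre over $P_2$ is $m-4\neq 0$ in $k^*/k^{*2}$ (again using $[k(\sqrt m,\sqrt{m-4}):k]=4$, so in particular $m-4\notin k^{*2}$).

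Finally, $\Br S_m/\Br k = \Br_1 S_m/\Br k$ because $S_m$ is a smooth projective geometrically rational surface, hence $\Br (S_m)_{\bar k}=0$ and there is no transcendental part; so $\Br S_m/\Br k\cong\ZZ/2\ZZ$ generated by $\alpha$.

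I expect the main obstacle to be the explicit residue bookkeeping in the first paragraph: pinning down exactly which element of $\bigoplus_i\HH^1(k_i,\ZZ/2\ZZ)$ is hit, checking the reciprocity constraint is the \emph{only} constraint (i.e.\ that the cokernel is all of $\ZZ/2\ZZ$ and not larger or smaller), and handling the behaviour over the closed point $P_4$ with residue field $k(\sqrt m)$ — here corestriction from $k(\sqrt m)$ to $k$ and the compatibility with the residues of $\alpha$ at the non-split point requires care. A clean alternative, to sidestep part of this, is to appeal directly to Proposition~\ref{prop:list}: under the stated hypothesis one checks (e.g.\ by identifying the Galois action on the $27$ lines of $S_m$, which factors through a specific small subgroup of $W(E_6)$ determined by $k(\sqrt m,\sqrt{m-4})/k$) that $\HH^1(k,\Pic(S_m)_{\bar k})\cong\ZZ/2\ZZ$, and then it only remains to exhibit $\alpha$ as a nonzero element — which is the computation in the second paragraph.
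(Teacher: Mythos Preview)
Your proposal is correct and takes essentially the same approach as the paper: use the conic bundle data from Lemma~\ref{lem:conic_bundle} together with the standard formula \cite[Thm.~2.11]{Lou} to read off $\Br S_m/\Br k\cong\ZZ/2\ZZ$, and then identify the generator via $t/s=x_3/x_0$ under the map~\eqref{eqn:blow_up}. The paper's proof is just a terse two-line version of exactly this; your extra remarks (vanishing of the transcendental part by geometric rationality, the residue checks at $P_1$ and $P_4$, the alternative via Proposition~\ref{prop:list}) are all sound but not needed once one invokes the cited theorem, which computes the full $\Br S/\Br k$ for conic bundle surfaces directly.
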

\begin{proof}
	Note that our assumptions imply that $(m - 4) \notin k(\sqrt{m})^{*2}$.
	Lemma \ref{lem:conic_bundle} and
	standard formulae for Brauer groups of conic bundle surfaces
	(e.g.~\cite[Thm.~2.11]{Lou}) show that $\Br S_m / \Br k$
	is generated by $((t/s)^2 - 4, m-4)$. With respect to the map
	\eqref{eqn:blow_up}, this gives the stated quaternion algebra.
\end{proof}

We finish with the following observation.

\begin{lemma} \label{lem:rational}
	The surface $S_m$ is rational if and only if $m-4 \in k(\sqrt{m})^{*2}$.
\end{lemma}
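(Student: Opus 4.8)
The plan is to exploit the conic bundle structure $\pi: S_m \to \PP^1$ from \eqref{eqn:conic_bundle}. A geometrically rational conic bundle surface over a field $k$ with $k$-rational points is $k$-rational precisely when it admits a section over $k$ (equivalently, when the conic bundle has a fibre of split type, or more robustly when the \emph{relative Brauer class} of the generic fibre is trivial). Since $S_m$ always has a $k$-point (for instance the point $(x_0:x_1:x_2:x_3) = (1:2:0:2)$ lies on $S_m$, or one may simply invoke that the three coplanar lines $L_i$ give rational points), rationality is governed entirely by whether the generic fibre conic is isotropic over $k(\PP^1) = k(t/s)$, equivalently whether it has a rational section.

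First I would compute the relative Brauer class. From the conic bundle equation $s(x^2+y^2) - txy + s(t^2 - ms^2)z^2 = 0$, dehomogenising by setting $u = t/s$, the generic fibre is the conic $x^2 + y^2 - uxy + (u^2 - m)z^2 = 0$ over $k(u)$. Diagonalising the binary form $x^2 - uxy + y^2$ (discriminant $u^2 - 4$) gives, up to squares, the quaternion symbol $(u^2 - 4,\ -(u^2 - m))$, or after adjusting, the class $(u^2-4,\ m-4)$ over $k(u)$ — this is exactly the residue computation recorded in Lemma \ref{lem:conic_bundle} and the generator of $\Br S_m/\Br k$ from Lemma \ref{lem:Br of comp}. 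The conic bundle has a section over $k$ if and only if this class vanishes in $\Br k(u)$.

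Next I would show this Brauer class over $k(u)$ vanishes if and only if $m - 4 \in k(\sqrt m)^{*2}$. One direction: the class $(u^2 - 4, m-4)$ has a residue at each root of $u^2 - 4$ and at the places above $u^2 - m$ (i.e.\ at $u = \pm\sqrt m$) equal to the class of $m-4$ in the relevant residue field (cf.\ Lemma \ref{lem:conic_bundle}(3)); for the class to be trivial in $\Br k(u)$ all residues must vanish, and the residue at the place $u = \sqrt m$ forces $m - 4 \in \kappa(\sqrt m)^{*2} = k(\sqrt m)^{*2}$ (when $m \notin k^{*2}$; the case $m \in k^{*2}$ can be handled directly as the surface then has split singular fibres over $P_2, P_3$). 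Conversely, if $m - 4 = c^2$ with $c \in k(\sqrt m)$, one checks the conic has a rational section: over $k(\sqrt m)$ the form $u^2 - m$ factors as $(u-\sqrt m)(u+\sqrt m)$, and writing $m - 4$ as a square in $k(\sqrt m)$ lets one explicitly solve the conic over $k(\sqrt m)(u)$; combined with the existing $k$-structure one descends to get a $k$-section (alternatively, the Brauer class $(u^2-4, m-4)$ is the corestriction from $k(\sqrt m)(u)$ of $(u^2 - 4, c^2) = 0$, hence trivial). Then $S_m$ is $k$-birational to a conic bundle with a section, hence to $\PP^1 \times \PP^1$ blown up, hence $k$-rational.

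The main obstacle is the rationality criterion in the non-split direction: knowing that the relative Brauer class vanishes only tells us the conic bundle has a section, and one must then argue that a geometrically rational conic bundle surface over $k$ with a $k$-section is in fact $k$-rational. This is standard (contract the section-adjacent components to reduce the number of singular fibres, or directly exhibit a birational map to a Hirzebruch surface), but deserves a careful invocation — I would cite the classical classification of rational conic bundle surfaces, or simply observe that after the section we may blow down to reach $\PP^2$. The residue/corestriction bookkeeping identifying the obstruction with $m - 4 \bmod k(\sqrt m)^{*2}$ is then essentially a reprise of Lemmas \ref{lem:conic_bundle} and \ref{lem:Br of comp}.
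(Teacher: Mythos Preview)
Your central claim---that $S_m$ is $k$-rational if and only if the conic bundle $\pi$ admits a $k$-section---is false, and this breaks the ``if'' direction of your argument. A conic bundle can be $k$-rational without its generic fibre being isotropic: rationality for conic bundles with a rational point is governed by whether one can contract enough components of split singular fibres to reach $\leq 3$ (in fact $\leq 2$ here) geometric singular fibres, not by the existence of a section. Concretely, take $m$ with $m-4 \in k(\sqrt{m})^{*2}$ but $m-4 \notin k^{*2}$ (e.g.\ $m = 4/(1-b^2)$ for $b \in k^*$ with $1-b^2 \notin k^{*2}$). The Brauer class of the generic fibre is $(u^2-4,\, m-u^2)$---not $(u^2-4,\, m-4)$ as you write; the latter is the unramified class living in $\Br S_m$, a different object---and its residue at $u=2$ is $m-4 \in k^*/k^{*2}$, which is nontrivial. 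So the generic conic is anisotropic and there is no section, yet the lemma asserts $S_m$ is rational. Your corestriction argument also does not go through: $m-4 \in k(\sqrt{m})^{*2}$ is not the same as $m-4$ being a norm from $k(\sqrt{m})$, and the corestriction of $(u^2-4,c^2)=0$ yields only the triviality of $(u^2-4, N(c)^2)$, which is vacuous.

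The paper instead argues the ``if'' direction by contracting components of split singular fibres. The fibre over $P_1$ always splits (trivial residue), and the hypothesis $m-4 \in k(\sqrt{m})^{*2}$ makes the fibre over $P_4$ split as well; blowing down a component in each leaves a conic bundle with at most $2$ geometric singular fibres, and such surfaces are classically known to be $k$-rational once they have a $k$-point. Your ``only if'' direction is essentially correct in spirit and matches the paper: if $m-4 \notin k(\sqrt{m})^{*2}$ then $\Br S_m/\Br k \neq 0$ (as in Lemma~\ref{lem:Br of comp}), so $S_m$ cannot be rational.
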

\begin{proof}
	If  $m-4 \notin k(\sqrt{m})^{*2}$, then one verifies in a similar manner
	to the proof of Lemma~\ref{lem:Br of comp} that $\Br S_m / \Br k $
	is non-trivial, hence $S_m$ is non-rational. On the other hand, 
	if $m-4 \in k(\sqrt{m})^{*2}$ and $m \notin k^{*2}$,
	then the fibre over $P_4$ 
	is split (see Lemma \ref{lem:conic_bundle}). A component in 
	the fibre can therefore be blown-down. Blowing down a component over $P_1$,
	it follows that $S_m$ is birational to a conic bundle surface 
	with at most $2$ singular fibres over $\bar{k}$. Such surfaces
	are well-known to be rational once they have a rational point
	(see e.g~\cite[\S 1]{KM17} and the references therein).
	A similar argument applies if $m \in k^{*2}$ and $m -4 \in k^{*2}$,
	and completes the proof.
\end{proof}

\section{Brauer group of affine Markoff surfaces} \label{sec:4}
We now calculate the Brauer groups of the affine surfaces 
\begin{equation*}
  U_m: \quad u_1^2 + u_2^2 + u_3^2 - u_1u_2u_3 = m.
\end{equation*}
It will be convenient to have an alternative shape for this equation, as in \cite[\S 8]{GS17}. Let $i, j, k$ be distinct members of the set $\{1, 2, 3\}$. Then
\begin{equation} \label{eqn:GS_1}
	U_m : \quad (2u_i - u_ju_k)^2 - (m - 4)2^2 = (u_j^2 - 4)(u_k^2 - 4).
\end{equation}
The change of variables $w = 2u_i - u_ju_k$ here corresponds to blowing down the line $L_i$ (as already used in Proposition \ref{prop:trans_Galois}). In particular, this shows the alternative 
formulae  
\begin{equation} \label{eq:alpha different reps}
  \alpha = (u_1^2 - 4, m - 4)   = (u_2^2 - 4, m - 4)   = (u_3^2 - 4, m - 4)
\end{equation}
for the element $\alpha$ from Lemma \ref{lem:Br of comp}.

\subsection{Transcendental Brauer group}

\begin{proposition} \label{prop:trans}
	Let $k$ be a field of characteristic $0$ and let
	$m \in k^*$ be such that $m-4 \in k^{*2}$.
	Then the natural map $\Br S_m \to \Br_1 U_m$ is an isomorphism.
	Moreover, if $k$ contains no non-trivial roots of unity,
	then the natural map $\Br S_m \to \Br U_m$ is an isomorphism.
\end{proposition}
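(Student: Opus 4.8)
The plan is to use the Gysin sequence machinery from Section~\ref{sec:2} together with the explicit geometry of the boundary. Since $m - 4 \in k^{*2}$, the three lines $L_1, L_2, L_3$ at infinity on $S_m$ cut out a hyperplane section $H$; I would first check that these lines do \emph{not} meet in an Eckardt point (one computes directly from \eqref{def:Sm} that the pairwise intersection points $L_i \cap L_j$ are distinct, since the three coordinate points $(0:\ast:\ast:\ast)$ with two zero coordinates are genuinely different). Proposition~\ref{prop:trans_Galois} then applies and gives $\Br U_{m,\bar k} \cong \QQ/\ZZ(-1)$ as a $G_k$-module, with $(\Br U_{m,\bar k})^{G_k} \cong \ZZ/2\ZZ$ when $k$ has no non-trivial roots of unity.

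For the first assertion, I would run the long exact sequence
$$0 \to \Br_1 S_m / \Br k \to \Br_1 U_m / \Br k \to \ker\bigl(\HH^2(k,\ZZ^3) \overset{i^*}{\to} \HH^2(k, \Pic \bar S_m)\bigr) \to \cdots$$
exactly as in Remark~\ref{rem:magma}, coming from the defining sequence \eqref{seq:Pic} after taking $G_k$-cohomology (using $\HH^1(k,\ZZ^3) = 0$). The point is that when $m - 4 \in k^{*2}$, the Galois action on the three lines $L_1, L_2, L_3$ is trivial (they are individually defined over $k$: indeed $L_i$ is $\{x_0 = x_i = 0\}$, which is visibly $k$-rational), so $\ZZ^3$ is a \emph{trivial} $G_k$-module and $i^*: \HH^2(k, \ZZ^3) \to \HH^2(k, \Pic \bar S_m)$ is split by the map $j$ of Remark~\ref{rem:magma} up to the factor $2$. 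I would then argue that the image of $\Br_1 U_m / \Br k$ in $\HH^2(k, \ZZ^3)$ is actually zero: the three residues of a class in $\Br_1 U_m$ along $L_1, L_2, L_3$ are constrained by the relation $[L_1] + [L_2] + [L_3] = -K_{S_m}|_H$-type identities, and more directly one uses that the residue of $\alpha = (u_i^2 - 4, m-4)$ along each $L_i$ is trivial precisely because $m - 4$ is a square. Combined with $\Br S_m / \Br k \cong \ZZ/2\ZZ$ from Lemma~\ref{lem:Br of comp} — wait, that lemma requires $[k(\sqrt m, \sqrt{m-4}):k] = 4$, which fails here — so instead I would note that when $m - 4 \in k^{*2}$ and $m \notin k^{*2}$, the surface $S_m$ has a split fibre over $P_4$ and over $P_1$, so by Lemma~\ref{lem:rational} it is rational and $\Br S_m = \Br k$; hence the claim becomes $\Br_1 U_m = \Br k$, and this follows since $\HH^1(k, \Pic \bar U_m)$ must then vanish (the relevant Galois module is the quotient of $\Pic \bar S_m$ by the trivial submodule $\ZZ^3$, and one checks $\HH^1$ kills it — alternatively, all of $\Br_1 U_m / \Br k$ injects into a group that the residue computation shows to be trivial). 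For $m \in k^{*2}$ with $m - 4 \in k^{*2}$ a parallel argument works.

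For the second (stronger) assertion, I would combine the first with Proposition~\ref{prop:trans_Galois}. The exact sequence
$$0 \to \Br_1 U_m \to \Br U_m \to (\Br U_{m,\bar k})^{G_k}$$
shows it suffices to prove that the map $\Br U_m \to (\Br U_{m,\bar k})^{G_k} \cong \ZZ/2\ZZ$ is zero, i.e.\ that the single non-trivial Galois-invariant transcendental class does not descend. This is the genuine obstacle. The approach I would take is to analyze the obstruction to descent directly: the class in $(\Br U_{m,\bar k})^{G_k}$ lifts to $\Br U_m$ iff a certain element of $\HH^2(k, \bar k(U_m)^*/\bar k^*)$ or, more usefully, iff the corresponding class in $\HH^3(k, \bar k^*) = 0$ obstruction vanishes — but $\HH^3(k,\bar k^*)$ need not vanish for general $k$, so I would instead trace through the Gysin/Kummer diagram in the proof of Proposition~\ref{prop:trans_Galois} at finite level $n = 2$. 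There, $(\Br U_{m,\bar k})[2]$ was identified with $\HH^1(\Gm, \ZZ/2\ZZ) = \Hom(\mu_2, \ZZ/2\ZZ)$ via $U_1 \cap C_2 \cong \Gm$, and the descent datum is governed by whether this $\Gm$ (which is the smooth conic $C_2$ minus its two intersection points with $C_1$, sitting over the del Pezzo quartic $S'$) has its two punctures defined over $k$ or conjugate over a quadratic extension. The hypothesis "$k$ contains no non-trivial roots of unity" forces the coefficient twist $\Hom(\mu_2, \ZZ/2\ZZ)$ to be the trivial module — that gives Galois invariance — but descent to an honest Brauer class additionally requires the \emph{extension class} in $\HH^2(k, \mu_2)$ arising from $0 \to \mu_2 \to \bar k(\Gm)^*/\bar k^* \to \cdots$ to vanish against this element, and I expect that for $m$ outside a thin set (concretely, when $\sqrt m \notin k$) the relevant two points on $C_2$ are swapped by $\Gal(k(\sqrt m)/k)$, so the class pairs non-trivially and does \emph{not} descend. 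I would make this precise by exhibiting the two punctures explicitly in coordinates from \eqref{eqn:conic_bundle}: they lie in the fibre over $P_4$, whose residue field is $k(\sqrt m)$ by Lemma~\ref{lem:conic_bundle}, and conclude that the transcendental part of $\Br U_m$ is killed. The main difficulty is precisely this descent computation — converting "Galois-invariant at the level of $\bar k$" into "genuinely defined over $k$" — and I would handle it by the explicit coordinate description of the $\Gm$ appearing in the Gysin argument rather than by an abstract spectral-sequence manipulation.
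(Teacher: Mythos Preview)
Your approach diverges from the paper's and both parts have real gaps.

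For the first assertion you reduce (correctly, via Lemma~\ref{lem:rational}) to showing $\HH^1(k,\Pic\bar U_m)=0$, but ``one checks $\HH^1$ kills it'' is precisely the content to be proved and is never carried out. The map $j$ of Remark~\ref{rem:magma} only shows that the image of $\Br_1 U_m/\Br k$ in $\HH^2(k,\ZZ^3)$ is $2$-torsion, not that it is zero; and that image is a connecting homomorphism, not the triple of residues along the $L_i$, so the remark that $\alpha$ has trivial residue because $m-4$ is a square does not address it.

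For the second assertion the geometry is wrong. The two punctures of the $\Gm$ appearing in the proof of Proposition~\ref{prop:trans_Galois} are the points of $C_1\cap C_2$ on the blow-down $S'$; these are the image of $L_1\cap L_2=(0{:}0{:}0{:}1)$ and the image of the contracted line $L_3$, both $k$-rational for every $m$. They do not lie in the fibre over $P_4$ and are not swapped by $\Gal(k(\sqrt m)/k)$, so your proposed obstruction ``$\sqrt m\notin k$'' cannot be the right one --- and would in any case leave the proposition unproved when $m\in k^{*2}$.

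The paper's argument is uniform for both parts and works directly on $S_m$. Take $b\in\Br U_m$; its residue along $L=L_3$ lies in $\HH^1(U_L,\ZZ/4\ZZ)$, where $U_L=L\setminus(L_1\cup L_2)\cong\Gm$ via $t\mapsto(0{:}1{:}t{:}0)$. The key input is that the conic-bundle fibres $C_2,C_3$ over $P_2,P_3$ are \emph{split} (this is where $m-4\in k^{*2}$ is used) and each meets $L$ in an Eckardt point, namely $Q_2=(0{:}1{:}1{:}0)$ and $Q_3=(0{:}1{:}{-}1{:}0)$. A component $F_2$ of $C_2$ satisfies $F_2\setminus Q_2\cong\AA^1$, so $b|_{F_2}$ is constant and extends across $Q_2$; since $F_2$ meets $L$ transversely, \cite[Prop.~4.15]{LTBT18} forces the residue of $b$ to be trivial at $Q_2$, i.e.\ the associated double cover $V\to U_L$ acquires a rational point over $Q_2$ and is therefore geometrically irreducible. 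For $b\in\Br_1 U_m$ the residue is a constant class in $\HH^1(k,\ZZ/2\ZZ)$, so a non-trivial residue would make $V$ geometrically reducible --- contradiction, and the first assertion follows. For general $b$, the rational point over $Q_2$ pins the cover down as $t\mapsto t^2$; repeating the argument with a component of $C_3$ gives a rational point over $Q_3$, i.e.\ over $t=-1$, forcing $-1\in k^{*2}$ and contradicting the hypothesis on roots of unity. The idea you are missing is this use of the pair of Eckardt points $Q_2,Q_3\in L$ together with the rational curves in the split fibres passing through them.
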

\begin{proof}
	Our argument is inspired by the proof of \cite[Prop.~3.1]{CTW12}.
	Let $b \in \Br U_m$. To show that $b \in \Br S_m$, it suffices
	to show that $b$ is unramified along the three lines $L_i$ on $S_m$.
	To do so, we are free to multiply $b$ by constant algebra,
	so by Proposition \ref{prop:trans_Galois} and Remark \ref{rem:magma}
	we may assume that $b$ has order dividing $4$.
	
	Let $L=L_3$ and $C = L_1 \cup L_2$.
	Let $U_L = L \setminus C$. Note that $L$ meets $C$ in two rational points, hence
	$L$ is non-canonically isomorphic to $\Gm$. We choose the point $(0:1:1:0) \in U_L$
	to be the identity element of the group law. Then the isomorphism with $\Gm$
	is realised via
	\begin{equation} \label{eqn:Gm}
		\Gm \to S_m, \quad t \mapsto (0:1:t:0).
	\end{equation}
	The residue	of $b$ along $L$ lies inside $\HH^1(U_L, \ZZ/4\ZZ)$.
	We will show by contradiction that it is trivial. So assume
	that the residue is non-trivial. Replacing $b$ by $2b$, if necessary,
	we may assume that the residue has order $2$.
	This means that the residue
	corresponds to some irreducible degree $2$ finite \'etale cover $f:V \to U_L$.
	
	As $m-4 \in k^{*2}$, the fibres $C_2$ and $C_3$ over $P_2$ and $P_3$, respectively,
	are both split (i.e.~a union of two lines over $k$). It turns
	out that $L$ meets $C_2$ and $C_3$ each in exactly one point
	with multiplicity two;
	i.e.~these are Eckardt points on the surface. Let $Q_2=(0:1:1:0)$
	and $Q_3=(0:1:-1:0)$ be the corresponding rational points (note that
	$Q_2$ is the identity element of $U_L$).
	Let $F_2$ be an irreducible component of $C_2$ and consider the restriction
	of $b$ to $F_2$. This is well-defined outside of $Q_2$. 
	However $F_2 \setminus Q_2 \cong \mathbb{A}^1$ has constant Brauer group,
	so $b$ in fact extends to all of $F_2$. As $F_2$ meets $L$ transversely,
	we deduce from \cite[Prop.~4.15]{LTBT18}
	that the evaluation of the residue of $b$ at $Q_2$ is also trivial,
	so that $f^{-1}(Q_2)$ consists of exactly $2$ rational points. This shows that
	$V$ is geometrically irreducible, and hence $V \cong \Gm$ non-canonically.
	
	If $b \in \Br_1 U_m$, then the residue lies in $\HH^1(k,\ZZ/2\ZZ)$,
	so if it is non-trivial then it corresponds to some quadratic
	extension of $k$. Hence if it is non-trivial then 
	$V$ must be geometrically irreducible, which contradicts the above.
	We deduce that $b$ is unramified along $L$.
	However, running the same argument with the other $2$ lines shows that $b$
	is unramified hence $b \in \Br S_m$ as claimed. 
	
	Assume now that $k$ contains no non-trivial roots of unity.	
	Choosing a rational point over $Q_2$ and using \eqref{eqn:Gm},
	we may therefore identity the cover $V \to U_L$ with the map
	\begin{equation} \label{eqn:cover}
		\Gm \to S_m, \quad t \mapsto (0:1:t^2:0).
	\end{equation}
	However, we may run the exact same argument with $C_3$ to deduce that 
	the fibre of $f$ over $Q_3 = (0:1:-1:0)$ contains a rational point.
	But our assumptions imply that $-1 \notin k^{*2}$,
	which is a contradiction. Thus the residue of $b$ along $L$ is trivial. 
	Considering the other lines, as above, we conclude that $b$ is everywhere unramified,
	hence $b \in \Br S_m$. 
\end{proof}

In \cite[\S 4]{CTWX}, further cases are given where the transcendental Brauer group is trivial. For completeness we give our own proof of these results using our method.

\begin{proposition} \label{prop:trans_2}
	Let $k$ be a field of characteristic $0$ and let
	$m \in k^*$ be such that $m \in k^{*2}$ and $m-4 \in k^{*2}$.
	If $\frac{\sqrt{m} + \sqrt{m-4}}{2} \notin k^{*2}$,
	then the natural map $\Br S_m\{2\} \to \Br U_m\{2\}$ is an isomorphism. 
\end{proposition}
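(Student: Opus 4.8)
The plan is to adapt the proof of Proposition~\ref{prop:trans}, the only new feature being that, since now $m\in k^{*2}$, the fibre of the conic bundle $\pi$ over $P_4$ also splits over $k$, and a point on this fibre can play the role that the Eckardt point $Q_3=(0:1:-1:0)$ played there. First one makes the standard reductions: $\Br S_m\{2\}\to\Br U_m\{2\}$ is injective because $U_m$ is dense open in the smooth surface $S_m$, and by Grothendieck purity a class $b\in\Br U_m$ lies in $\Br S_m$ precisely when it is unramified along each of $L_1,L_2,L_3$; so it suffices to show that every $b\in\Br U_m\{2\}$ is unramified along $L:=L_3$, since the evident symmetry permuting the $L_i$ then handles $L_1$ and $L_2$. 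Suppose not, so that $\res_L(b)\neq 0$; replacing $b$ by $2^{r-1}b\in\Br U_m$, where $2^r$ is the order of $\res_L(b)$, we may assume $\res_L(b)\in\HH^1(U_L,\ZZ/2\ZZ)$ has order $2$, where $U_L:=L\setminus(L_1\cup L_2)\cong\Gm$. Note that we only need to control the order of the residue, not of $b$ itself, so no hypothesis on the roots of unity in $k$ is required here.

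Next one imports the part of the proof of Proposition~\ref{prop:trans} that uses only $m-4\in k^{*2}$. There the fibre $C_2$ of $\pi$ over $P_2$ is split and $L$ meets it in the single point $Q_2=(0:1:1:0)$, which is an Eckardt point. A component $F_2$ of $C_2$ meets the boundary $L_1\cup L_2\cup L_3$ only at $Q_2$, so $F_2\setminus\{Q_2\}\cong\AA^1$ has constant Brauer group, $b|_{F_2}$ extends across $Q_2$, and by \cite[Prop.~4.15]{LTBT18} the evaluation of $\res_L(b)$ at $Q_2$ is trivial. Identifying $U_L$ with $\Gm$ via \eqref{eqn:Gm}, with $Q_2$ as the identity, the class $\res_L(b)$ is then a non-trivial $\ZZ/2\ZZ$-torsor of $\Gm$ whose fibre over the identity consists of two $k$-points; by Kummer theory on $\Gm$ (using $\mu_2\cong\ZZ/2\ZZ$) the only such torsor is the cover $v\mapsto v^2$, i.e.\ the map \eqref{eqn:cover}, $t\mapsto(0:1:t^2:0)$.

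Now comes the new step. Since $m\in k^{*2}$, write $\mu=\sqrt m$; then the factor $t^2-ms^2$ of the discriminant (Lemma~\ref{lem:conic_bundle}) splits over $k$, so $\pi^{-1}(P_4)$ is a union of two $k$-rational lines, the components of the plane conic $x^2-\mu xy+y^2=0$ appearing in \eqref{eqn:conic_bundle}. A short computation with \eqref{eqn:blow_up} shows that one component $F$ of $\pi^{-1}(P_4)$ meets $L_1\cup L_2\cup L_3$ only at the point $R:=(0:\alpha:1:0)$, where $\alpha:=\tfrac12(\sqrt m+\sqrt{m-4})$ is a root of $X^2-\mu X+1$; that $R\in U_L$ and corresponds under \eqref{eqn:Gm} to the parameter $t=\alpha^{-1}$; and that $F$ meets $L$ transversally at $R$, because $P_4$ is not one of the two branch points $P_2,P_3$ of the degree-$2$ map $\pi|_L\colon L\to\PP^1$, so $L$ avoids the singular point of $\pi^{-1}(P_4)$. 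As with $C_2$, the restriction $b|_F$ then extends across $R$, and \cite[Prop.~4.15]{LTBT18} gives that the evaluation of $\res_L(b)$ at $R$ is trivial. But for the cover \eqref{eqn:cover} the fibre over the point $t=\alpha^{-1}$ consists of two $k$-points if and only if $\alpha^{-1}\in k^{*2}$, equivalently $\alpha\in k^{*2}$, contradicting the hypothesis that $\tfrac12(\sqrt m+\sqrt{m-4})\notin k^{*2}$. Hence $\res_L(b)=0$; by symmetry $b$ is unramified along every $L_i$, so $b\in\Br S_m$, as required.

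The step requiring the most care is the local analysis at $R$: checking that exactly one component $F$ of $\pi^{-1}(P_4)$ meets the boundary only at $R$, that $R$ lies in $U_L$ with parameter $\alpha^{-1}$, and that $F$ and $L$ meet transversally there, so that \cite[Prop.~4.15]{LTBT18} genuinely applies. This is a direct calculation with \eqref{eqn:conic_bundle}, \eqref{eqn:blow_up} and \eqref{eqn:GS_1}, together with the description of the singular fibres of $\pi$ in Lemma~\ref{lem:conic_bundle}. The other point to verify carefully is the harmless reduction to the case where the residue along $L$ has order $2$, exactly as in the proof of Proposition~\ref{prop:trans}.
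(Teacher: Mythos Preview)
Your argument is correct and follows essentially the same route as the paper's proof: both reduce to a residue of order $2$ along $L_3$, identify it with the squaring cover \eqref{eqn:cover} via the Eckardt point $Q_2$, and then use a component of the split fibre over $P_4$ to produce a second point on $U_L$ over which the cover must split, contradicting the hypothesis on $\tfrac{\sqrt{m}+\sqrt{m-4}}{2}$. The only cosmetic difference is that you record the intersection point as $(0:\alpha:1:0)$ with parameter $\alpha^{-1}$, whereas the paper writes it as $(0:1:\tfrac{\sqrt{m}\pm\sqrt{m-4}}{2}:0)$; since $\alpha\in k^{*2}$ iff $\alpha^{-1}\in k^{*2}$, the conclusion is identical. (One small wording slip: it is not that \emph{exactly one} component of $\pi^{-1}(P_4)$ meets the boundary only at a single point---both components do, at the two distinct points of $L\cap\pi^{-1}(P_4)$---but either component suffices for the argument.)
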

\begin{proof}
	The proof is a minor variant of the proof of Proposition \ref{prop:trans}.
	Let $b$ have reside of order $2$ along $L_3$. 
	As in the proof of Proposition \ref{prop:trans}, we find that this residue
	corresponds to \eqref{eqn:cover}.
	Since $m$ and $m-4$ are both squares in $k$,
	each fibre of the conic bundle morphism is split. We consider the lines in one
	of the fibres over $P_4$. 
	They are contained in the hyperplane $x_0 = \sqrt{m}x_3^2$ and given by
	the equation $x_1^2 + x_2^2 - \sqrt{m}x_1x_2 = 0$. The components
	are
	$$2x_1 = (\sqrt{m} \pm \sqrt{m-4})x_2.$$
	These meet the line $L_3$ in the points
	$$\left(0:1: \frac{\sqrt{m} \pm \sqrt{m-4}}{2}:0\right).$$
	As in the proof of Proposition \ref{prop:trans}, we find that the fibres
	over these points contain a rational point. But $\frac{\sqrt{m} + \sqrt{m-4}}{2}$
	is not a square by assumption, which is a contradiction so
	$b$ is unramified along $L_3$.
	Arguing with the lines $L_1$ and $L_2$ shows that $b$ is everywhere
	unramified, as required.
\end{proof}

\begin{corollary} \label{cor:trans}
	Let $m \in \QQ^*$ with $m \neq 0,4$. If $-(m-4)$
	is a square in $\QQ$, then assume further that 
	$\frac{\sqrt{m}+ \sqrt{m-4}}{2}$ is not a square in 
	$\QQ(\sqrt{m}, \sqrt{m-4})$.
	Then $U_m$ has trivial transcendental Brauer group. 
\end{corollary}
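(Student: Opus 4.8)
The plan is to derive the statement from Propositions~\ref{prop:trans} and~\ref{prop:trans_2} by base change, after first reducing it to a question about the $2$-primary part of the Brauer group. The three lines $L_i\colon x_0 = x_i = 0$ on $S_m$ meet pairwise in the three \emph{distinct} points $(0:0:0:1)$, $(0:0:1:0)$, $(0:1:0:0)$, so they do not meet in an Eckardt point; hence Proposition~\ref{prop:trans_Galois} gives $\Br (U_m)_{\bar{\QQ}} \cong \QQ/\ZZ(-1)$, whose $G_\QQ$-invariant subgroup is $\ZZ/2\ZZ$ since $\QQ$ has no nontrivial roots of unity. Thus the transcendental Brauer group of $U_m$ --- the image of $\Br U_m$ in $\Br (U_m)_{\bar{\QQ}}$ --- is $2$-torsion. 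For a finite extension $K/\QQ$ this image is contained in the analogous image for $U_{m,K}$ (factor $\Br U_m \to \Br (U_m)_{\bar{\QQ}}$ through $\Br U_{m,K}$), and if $\Br U_{m,K}\{2\}$ lies in the subgroup $\Br S_{m,K}\subseteq\Br U_{m,K}$ (inclusion via Grothendieck purity), then, as $S_{m,K}$ is a smooth projective geometrically rational surface and so $\Br (S_{m,K})_{\bar{\QQ}}=0$, the image for $U_{m,K}$ has trivial $2$-primary part. So it suffices to exhibit one $K/\QQ$ with $\Br U_{m,K}\{2\}\subseteq\Br S_{m,K}$.

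If $-(m-4)\notin\QQ^{*2}$, I would take $K=\QQ(\sqrt{m-4})$ (so $K=\QQ$ when $m-4$ is already a square): then $m-4\in K^{*2}$ while $\sqrt{-1}\notin K$, since a quadratic field contains $\sqrt{-1}$ only if it equals $\QQ(\sqrt{-1})$, which would force $m-4\equiv-1\bmod\QQ^{*2}$. I would then rerun the proof of Proposition~\ref{prop:trans} over $K$: for $b\in\Br U_{m,K}\{2\}$ the residue along each $L_i$ is $2$-power torsion, and after replacing $b$ by a power-of-$2$ multiple we may assume this residue has order $2$; the proof of Proposition~\ref{prop:trans} then forces the associated double cover to be $t\mapsto(0:1:t^2:0)$, and rationality of its fibre over the Eckardt point $(0:1:-1:0)$ would give $-1\in K^{*2}$, a contradiction. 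Hence every such $b$ is unramified along all three lines, giving $\Br U_{m,K}\{2\}\subseteq\Br S_{m,K}$. (When $m-4\not\equiv-1,-3\bmod\QQ^{*2}$ this is literally Proposition~\ref{prop:trans}, $K$ then having no nontrivial roots of unity; over $K=\QQ(\sqrt{-3})$ it still works, as the proof uses only $-1\notin K^{*2}$.) If instead $-(m-4)\in\QQ^{*2}$, write $m-4=-c^2$ with $c\in\QQ^*$ and take $K=\QQ(\sqrt{m},\sqrt{m-4})=\QQ(\sqrt{m},\sqrt{-1})$: then $m,m-4\in K^{*2}$ and, by the extra hypothesis, $\tfrac{\sqrt{m}+\sqrt{m-4}}{2}\notin K^{*2}$, so Proposition~\ref{prop:trans_2} applies over $K$ and shows $\Br U_{m,K}\{2\}\subseteq\Br S_{m,K}$. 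In both cases the reduction above completes the proof.

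The cohomological bookkeeping of the reduction, and the elementary determination of which quadratic fields contain $\sqrt{-1}$, are routine. The one delicate point --- and hence the main obstacle --- is the claim in the first case that the proof of Proposition~\ref{prop:trans} survives weakening its hypothesis from ``$K$ has no nontrivial roots of unity'' to ``$\sqrt{-1}\notin K$'' once one restricts to $2$-primary classes: one must check that the step there reducing a class to order dividing $4$ may be replaced harmlessly by reducing its (automatically $2$-power-torsion) residue to order $2$, and that roots of unity of odd order, or of order $8$, play no role in the geometric part of the argument. Inspecting that proof, the hypothesis is used only in the final appeal to $-1\notin K^{*2}$, so there is in fact no difficulty.
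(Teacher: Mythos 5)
Your proof is correct and follows essentially the same route as the paper: in both cases one base-changes to $\QQ(\sqrt{m-4})$ (resp.\ $\QQ(\sqrt{m},\sqrt{m-4})$), invokes the argument of Proposition~\ref{prop:trans} (resp.\ Proposition~\ref{prop:trans_2}) to push the class into $\Br S_m$ over that field, and then uses $\Br S_{m,\bar\QQ}=0$ to get a contradiction. You are somewhat more careful than the paper about two points that it glosses over --- the reduction to the $2$-primary part, and the fact that when $\QQ(\sqrt{m-4})=\QQ(\sqrt{-3})$ the hypothesis of Proposition~\ref{prop:trans} literally fails but its proof only needs $-1\notin K^{*2}$ --- but the underlying argument is identical.
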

\begin{proof}
	Let $b \in \Br U_m$ be a transcendental Brauer group element.
	Modifying $b$ by a constant algebra, we may assume that 
	$b$ has order $2$ by Proposition \ref{prop:trans_Galois}.
	Assume first that $-(m-4)$ is not a square. 
	Then $\QQ(\sqrt{m-4})$ contains no non-trivial root of unity of even order,
	hence by (the proof of) Proposition~\ref{prop:trans}
	the element becomes unramified over $\QQ(\sqrt{m-4})$. 
	But $S_m$ has trivial Brauer group
	over $\bar{\QQ}$, which is a contradiction.
	
	On the other hand, if $-(m-4)$
	is a square in $\QQ$, then  by assumption
	$\frac{\sqrt{m}+ \sqrt{m-4}}{2}$ is not a square in 
	$\QQ(\sqrt{m}, \sqrt{m-4})$ .
	The result in this case now follows from a similar
	application of Proposition \ref{prop:trans_2}.
\end{proof}

It is shown in \cite[\S 4]{CTWX} that if the assumptions of Corollary \ref{cor:trans} fail, then the transcendental Brauer group can in fact be non-trivial. In particular, Corollary \ref{cor:trans} is sharp.

\begin{remark}
	Note that $(\Br U_{m,\bar{\QQ}})^{G_\QQ} = \ZZ/2\ZZ$
	by Proposition \ref{prop:trans_Galois}. Nevertheless, in 
	Corollary \ref{cor:trans} the Galois invariant element
	of order $2$ does not descend to a Brauer group element over $\QQ$.
\end{remark}

\subsection{Algebraic Brauer group}

\begin{proposition} \label{prop:Br1 Um}
	Let $k$ be a field of characteristic $0$ and $m \in k^*$ such that 
	$[k(\sqrt{m}, \sqrt{m-4}):k] = 4$.
	Then $\Br_1 U_m/ \Br k \cong (\ZZ/2\ZZ)^3$. A complete set
	of representatives for the non-trivial elements are given by the quaterion
	algebras
	$$\alpha_{i,\pm} = (u_i \pm 2, m-4), \, i \in \{1,2,3\},
	 \quad \alpha = (u_1^2 - 4,m-4),$$
	which satisfy the following relations in $\Br_1 U_m$:
	\begin{align}
		\alpha_{i,-} + \alpha_{i,+} &= \alpha, \quad i \in \{1,2,3\}, \label{eqn:alpha_-_+} \\
		\alpha_{1,+} + \alpha_{2,+} + \alpha_{3,+} &= 0, \label{eqn:sum_alpha_i_+} \\
		\alpha_{1,-} + \alpha_{2,-} + \alpha_{3,-} &= \alpha. \label{eqn:sum_alpha_i}
	\end{align}
	Moreover $\Br_1 U_m/ \Br k $ is generated by the quaternion algebras
	$\alpha_{i,-}$ for $i \in \{1,2,3\}.$
\end{proposition}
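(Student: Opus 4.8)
The plan is to combine three ingredients: (1) the rank computation $\Br_1 U_m/\Br k \cong (\ZZ/2\ZZ)^3$, (2) verification that the $\alpha_{i,\pm}$ and $\alpha$ genuinely lie in $\Br_1 U_m$, and (3) the relations, which together show these classes span a group of order $8$, forcing them to be a complete set of representatives and identifying the generators.

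\medskip

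\textbf{Step 1: the abstract structure of $\Br_1 U_m/\Br k$.} Proposition \ref{prop:list} tells us $\Br_1 U/\Br k = \HH^1(k,\Pic \bar U)$ is one of the groups on the list. To pin it down as $(\ZZ/2\ZZ)^3$ I would use the exact sequence \eqref{seq:Pic} together with the conic bundle picture: from the excerpt's own analysis (Remark \ref{rem:magma}), applying Galois cohomology to $0\to \ZZ^3 \to \Pic\bar S\to \Pic\bar U\to 0$ and using $\HH^1(k,\ZZ^3)=0$ gives
\[
0 \to \Br_1 S/\Br k \to \Br_1 U/\Br k \to \HH^2(k,\ZZ^3) \xrightarrow{i^*} \HH^2(k,\Pic\bar S).
\]
By Lemma \ref{lem:Br of comp} and the hypothesis $[k(\sqrt m,\sqrt{m-4}):k]=4$ we have $\Br_1 S/\Br k = \Br S/\Br k \cong \ZZ/2\ZZ$ (note $S_m$ is geometrically rational so $\Br_1 S = \Br S$). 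So it remains to compute $\ker i^*$; since $\HH^2(k,\ZZ^3) = (\Br k)^3$ via the three lines, and $j^*\circ i^*$ is multiplication by $2$, the kernel is $2$-torsion, and one computes directly (using the explicit Galois action on the lines $L_1,L_2,L_3$ — they are permuted/fixed according to the quadratic characters of $m$ and $m-4$, hence individually Galois-stable here since the $L_i$ are $k$-rational) that $\ker i^*$ is exactly $(\ZZ/2\ZZ)^2$. This gives $|\Br_1 U/\Br k| = 2\cdot 4 = 8$, and since every element is $2$-torsion (the $\ZZ/4\ZZ$ and $\ZZ/2\times\ZZ/4$ cases on the list have elements of order $4$), it must be $(\ZZ/2\ZZ)^3$. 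Alternatively, one can simply cite the \texttt{magma} enumeration from Proposition \ref{prop:list} restricted to the specific Galois action realised here.

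\medskip

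\textbf{Step 2: the listed classes lie in $\Br_1 U_m$ and are distinct.} That $\alpha = (u_1^2-4,m-4)$ restricts to an element of $\Br U_m$ is already known from Lemma \ref{lem:Br of comp} and \eqref{eq:alpha different reps}, and it is algebraic (it comes from $\Br S_m$). For $\alpha_{i,\pm} = (u_i\pm 2, m-4)$: the quaternion symbol $(a,b)$ with $b=m-4$ fixed defines a class in $\Br k(U_m)$; I need that its residue along every codimension-one point of $\sU_m$ (equivalently, every prime divisor of $U_m$) is trivial. The residue of $(u_i\pm 2, m-4)$ along a divisor $D$ is $\ord_D(u_i\pm 2)$ times the class of $m-4$ in $\kappa(D)^*/\kappa(D)^{*2}$; this is visibly unramified away from the divisor $\{u_i = \mp 2\}$, and along that divisor one uses equation \eqref{eqn:GS_1} in the form $(2u_i-u_ju_k)^2 - 4(m-4) = (u_j^2-4)(u_k^2-4)$ to see that $m-4$ becomes a square in the residue field (when $u_i = \mp 2$ the left side factors and one reads off that $m-4$ is a norm, hence a square locally), so the residue vanishes. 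Hence $\alpha_{i,\pm}\in\Br_1 U_m$ (algebraicity: each becomes trivial over $\bar k$ since $\bar k(\sqrt{m-4})=\bar k$).

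\medskip

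\textbf{Step 3: relations and conclusion.} The relations \eqref{eqn:alpha_-_+}--\eqref{eqn:sum_alpha_i} are pure algebra in $\Br k(U_m)$ using bimultiplicativity of quaternion symbols and the equation of $U_m$: \eqref{eqn:alpha_-_+} is $(u_i-2,m-4)+(u_i+2,m-4) = (u_i^2-4,m-4)=\alpha$; for \eqref{eqn:sum_alpha_i_+}, $\sum_i (u_i+2,m-4) = (\prod_i(u_i+2), m-4)$ and one checks $\prod_i(u_i+2)$ is a square times a norm from $k(\sqrt{m-4})$ on $U_m$ (expand $\prod(u_i+2)$ and compare with $\eqref{def:Um}$ — this is the one genuinely computational point, and it is the kind of identity also used in \cite{GS17,CTWX}); \eqref{eqn:sum_alpha_i} then follows by adding \eqref{eqn:sum_alpha_i_+} and three copies of \eqref{eqn:alpha_-_+}. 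Now: by the relations, the subgroup of $\Br_1 U_m/\Br k$ generated by $\{\alpha_{1,-},\alpha_{2,-},\alpha_{3,-}\}$ already contains $\alpha$ (by \eqref{eqn:sum_alpha_i}) and all $\alpha_{i,+}$ (by \eqref{eqn:alpha_-_+}), so it equals the subgroup generated by all seven listed classes; being $2$-torsion it has order at most $8$. To finish I must show it has order \emph{exactly} $8$, i.e.\ the three $\alpha_{i,-}$ are independent — equivalently no nontrivial product of them lies in $\Br k$. This is where I'd expect the main work: I would test independence by evaluating at well-chosen local points or by computing residues of the conic bundle against $S_m$, e.g.\ showing $\alpha_{1,-}+\alpha_{2,-} = \alpha_{3,+}$ is nontrivial because its residue along the relevant line or its invariant at a suitable place is nonzero. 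Once independence is established, the subgroup they generate has order $8 = |\Br_1 U_m/\Br k|$, so it is everything; hence $\Br_1 U_m/\Br k \cong (\ZZ/2\ZZ)^3$ with the $\alpha_{i,-}$ as a basis and the seven listed classes as the complete set of nontrivial elements. The main obstacle, then, is Step 3's independence claim together with the square-class identity for $\prod_i(u_i+2)$ on $U_m$; everything else is either cited (Propositions \ref{prop:list}, \ref{prop:trans_Galois}, Lemma \ref{lem:Br of comp}) or a routine residue computation.
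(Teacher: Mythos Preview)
Your plan is sound and closely parallels the paper's. The two points you flag as the ``main obstacle'' are exactly where the paper does the real work, and it resolves them as follows.

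For the product identity in Step 3: the paper gives it explicitly as
\[
(u_1+2)(u_2+2)(u_3+2) = (u_1+u_2+u_3+2)^2 - (m-4),
\]
which one checks by direct expansion using the defining equation \eqref{def:Um}. The right side is a norm from $k(\sqrt{m-4})$, so the symbol $((u_1+2)(u_2+2)(u_3+2),\,m-4)$ is trivial, giving \eqref{eqn:sum_alpha_i_+}.

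For independence: the paper computes residues of the $\alpha_{i,\pm}$ along the lines $L_j$ at infinity on $S_m$. One finds that $\alpha_{i,\pm}$ is unramified along $L_i$ but has residue $m-4$ along $L_j$ for $j\neq i$. This already separates the indices; combined with $\alpha_{i,-}+\alpha_{i,+}=\alpha\neq 0$ (Lemma \ref{lem:Br of comp}) and the relation \eqref{eqn:sum_alpha_i}, one sees the $\alpha_{i,-}$ generate a group of order $8$. So your instinct to use residues along the boundary lines was right; the point is to look at the $L_j$ on the compactification, not only at divisors inside $U_m$.

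Your Step 1 route to the upper bound $|\Br_1 U_m/\Br k|\le 8$ differs from the paper's. The paper does not compute $\ker i^*$; instead it either invokes the \texttt{magma} enumeration of Proposition \ref{prop:list} for this particular Galois action, or gives a direct geometric argument: by Proposition \ref{prop:trans}, any $\beta\in\Br_1 U_m$ becomes unramified over $k(\sqrt{m-4})$, so its residue along each $L_i$ lies in $\{1,m-4\}\subset k^*/k^{*2}$; ramification along exactly one $L_i$ is impossible by Lemma \ref{lem:blow-up}, and a short case analysis then forces $\beta$ to coincide with one of the seven listed classes. Your exact-sequence approach is viable in principle but not carried out --- and note a slip: $\HH^2(k,\ZZ^3)$ with trivial action is $\Hom(G_k,\QQ/\ZZ)^3$, not $(\Br k)^3$; its $2$-torsion is $(k^*/k^{*2})^3$, so pinning down $\ker i^*$ inside it genuinely requires the Galois module structure of $\Pic\bar S$, which you have not supplied.
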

\begin{proof}
	We first explain why $\alpha_{i,\pm} \in \Br_1 U_m$.
	It suffices to show that $\alpha_{i,\pm}$ is unramified
	along the divisor $u_i \pm 2 = 0$. However, this is
	one of the non-split singular fibres in the conic bundle,
	thus $m-4$ is a square in the function field by Lemma \ref{lem:conic_bundle} whence 
	$\alpha_{i,\pm}$ is indeed unramified  along this divisor.
	
	We now show that the $\alpha_{i,\pm}$ give distinct elements
	of the Brauer group. One calculates that $\alpha_{i,\pm}$
	is unramified along the line $L_i$, but has residue $m-4$
	along $L_j$ for $j \neq i$, $j \in \{1,2,3\}$, on the compactification
	$S_m$ \eqref{def:Sm}. Thus
	$\alpha_{i,\pm} \neq \alpha_{j,\pm}$ for $i \neq j$.
	Moreover the relation \eqref{eqn:alpha_-_+} is trivially verified,
	which shows that $\alpha_{i,-} \neq \alpha_{i,+}$ in $\Br_1 U_m$.
	Next one uses the equation \eqref{def:Um}
	to deduce that
	$$(u_1 + 2)(u_2 + 2)(u_3 + 2) 
	= (u_1 + u_2 + u_2 + 2)^2 - (m-4),$$
	whence \eqref{eqn:sum_alpha_i_+} easily follows. 
	Then \eqref{eqn:sum_alpha_i} follows from  \eqref{eqn:sum_alpha_i_+}
	and \eqref{eqn:alpha_-_+}.	
	This shows that
	the $\alpha_{i,-}$ generate a subgroup of $\Br_1 U_m$
	isomorphic to $(\ZZ/2\ZZ)^3$.
	
	To show that these also generate $\Br_1 U_m/ \Br k$, one can just construct
	the group action in our list from Proposition \ref{prop:list}
	using \texttt{magma}, and see that 
	$\Br_1 U_m/ \Br k \cong (\ZZ/2\ZZ)^3$ in this case.
	
	For completeness, we also give a geometric argument
	that we have found all the elements.
	Let $\beta \in \Br_1 U_m/\Br k$. By Proposition \ref{prop:trans},
	we find that $\beta$ becomes unramified over $k(\sqrt{m-4})$.
	This implies that the residue of $\beta$ along each line
	$L_i$ is killed after this extension, thus the residue is either
	$1$ or $m-4$. If $\beta$ is unramified on all $L_i$ then
	$\beta \in \{0,\alpha\}$. So assume $\beta$ is ramified along
	some $L_i$.	
	First note that $\beta$ cannot be ramified
	along just one of the $L_i$ by Lemma \ref{lem:blow-up}.
	It also follows that $\beta$ cannot we ramified along all $3$
	lines; indeed then $\beta + \alpha_{1,-}$ would be only ramified
	along $L_1$, which is impossible. Hence $\beta$ is ramified
	along exactly $2$ lines; say $L_2$ and $L_3$. But then $
	\beta + \alpha_{1,-}$ is unramified, hence $\beta + \alpha_{1,-}
	\in \{0,\alpha\}$, whence $\beta =\alpha_{1,-}$ or $\alpha_{1,+}$
	by our above arguments.
	This shows that $\beta$ is in the list of already found
	Brauer group elements, hence we are done.
\end{proof}

\begin{corollary} \label{cor:generic}
	Let $m \in \QQ^*$ be such that none of $m,(m-4)$, nor $m(m-4)$
	is a square in $\QQ$. Moreover, if $-(m-4)$ is a square in $\QQ$, then assume that 
	$\frac{\sqrt{m}+ \sqrt{m-4}}{2}$ is not a square in 
	$\QQ(\sqrt{m}, \sqrt{m-4})$.
	Then $\Br U_m/ \Br \QQ \cong (\ZZ/2\ZZ)^3$, generated by the quaternion
	algebras $\alpha_{i,-}$.
\end{corollary}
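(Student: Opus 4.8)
The plan is to combine the computation of the transcendental Brauer group (Corollary \ref{cor:trans}) with that of the algebraic Brauer group (Proposition \ref{prop:Br1 Um}), after checking that the hypotheses of both results are met. First I would verify the degree condition $[\QQ(\sqrt{m}, \sqrt{m-4}):\QQ] = 4$, which is needed to apply Proposition \ref{prop:Br1 Um}: since none of $m$, $m-4$, nor $m(m-4)$ is a square in $\QQ^*$, the two quadratic extensions $\QQ(\sqrt{m})$ and $\QQ(\sqrt{m-4})$ are distinct and non-trivial, and $\QQ(\sqrt{m})$ does not contain $\sqrt{m-4}$ (otherwise $m(m-4)$ would be a square), so the compositum has degree $4$ over $\QQ$. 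Hence Proposition \ref{prop:Br1 Um} gives $\Br_1 U_m / \Br \QQ \cong (\ZZ/2\ZZ)^3$, generated by the $\alpha_{i,-}$.

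Next I would invoke Corollary \ref{cor:trans} to conclude that $U_m$ has trivial transcendental Brauer group. Its hypotheses are exactly that $m \neq 0, 4$ (automatic here since $m$ is not a square and $m(m-4) \neq 0$; note $m = 4$ would make $m-4 = 0$ a square) together with the extra proviso, \emph{when $-(m-4)$ is a square in $\QQ$}, that $\tfrac{\sqrt{m} + \sqrt{m-4}}{2}$ is not a square in $\QQ(\sqrt{m}, \sqrt{m-4})$ — which is precisely the assumption carried over verbatim in the statement of the corollary. So Corollary \ref{cor:trans} applies and yields $\Br U_m = \Br_1 U_m$.

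Combining the two, $\Br U_m / \Br \QQ = \Br_1 U_m / \Br \QQ \cong (\ZZ/2\ZZ)^3$, generated by the quaternion algebras $\alpha_{i,-} = (u_i - 2, m-4)$ for $i \in \{1,2,3\}$, which completes the proof. The only genuine point requiring care is the bookkeeping of the hypotheses: one must check that ``none of $m$, $m-4$, $m(m-4)$ is a square'' really does imply the degree-$4$ condition of Proposition \ref{prop:Br1 Um}, and that the case distinction on whether $-(m-4)$ is a square lines up correctly between Corollary \ref{cor:trans} and the present statement. Neither is difficult, but it is the part where an error could creep in; there is no substantial geometric or cohomological obstacle beyond what has already been established.
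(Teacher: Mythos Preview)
Your proposal is correct and follows essentially the same approach as the paper: verify that the hypotheses force $[\QQ(\sqrt{m},\sqrt{m-4}):\QQ]=4$, then combine Corollary~\ref{cor:trans} (trivial transcendental Brauer group) with Proposition~\ref{prop:Br1 Um} (algebraic Brauer group $(\ZZ/2\ZZ)^3$ generated by the $\alpha_{i,-}$). The paper's proof is a two-line version of exactly this argument; you have simply spelled out the bookkeeping in more detail.
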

\begin{proof}
	Our assumptions imply that $[\QQ(\sqrt{m}, \sqrt{m-4}):\QQ] = 4$.
	The result then follows from Corollary \ref{cor:trans} and
	Proposition \ref{prop:Br1 Um}.
\end{proof}

\section{The Brauer--Manin obstruction} \label{sec:5}
We now calculate the Brauer--Manin obstruction for $U_m$ for $m \in \ZZ$.

\subsection{Set-up} \label{sec:set-up}
Recall that $U_m$ is given by \eqref{def:Um} and $\sU_m$ is its obvious integral model over $\ZZ$. For convenience let $\bu = (u_1, u_2, u_3)$. 

We make great use of the alternative equation \eqref{eqn:GS_1}.
This gives us an alternative expression of the model $\sU_m$, at least away from $2$. 
\begin{equation} \label{eqn:GS}
	\sU_m \otimes \ZZ[1/2] : \quad (2u_i - u_ju_k)^2 - (m - 4)2^2 = (u_j^2 - 4)(u_k^2 - 4).
\end{equation}

\emph{We assume throughout \S\ref{sec:BM}--\ref{sec:invariants} that $m \in \ZZ$ satisfies the hypotheses of Corollary~\ref{cor:generic}}.  Thus the group $\Br U_m / \Br \QQ$ is generated by the quaternion algebras $\alpha_{i, -}$ from Proposition~\ref{prop:Br1 Um}. We let $\mathcal{A} = \langle \alpha_{1,-}, \alpha_{2,-}, \alpha_{3,-} \rangle$ be the subgroup of $\Br U_m$ generated by these elements. We will also make use of the element $\alpha$, its alternative representations \eqref{eq:alpha different reps} and the relations from Proposition \ref{prop:Br1 Um}.

\subsubsection{Assumptions of Corollary \ref{cor:generic}} \label{sec:assumptions}
Let us note that the assumptions of Corollary \ref{cor:generic} are not very restrictive if one is interested in Brauer--Manin obstructions to the integral Hasse principle. Indeed, assume  that these conditions fail. Then
$$m-4,m, m(m-4), \text{ or } -(m-4) \in \QQ^{*2}.$$
In the first case we have $\Br U_m = \Br \QQ$ by Lemma \ref{lem:rational} and  Proposition \ref{prop:trans}. In the second case, there is no obstruction
to the integral Hasse principle, as there is always the integral
point $(\sqrt{m},0,0)$. The third case does not occur under the assumption that $U_m$ is smooth, as the following shows.

\begin{lemma} \label{lem:m(m-4)}
	Let $m \in \ZZ$ be such that $m(m-4)$ is a square.
	Then $m = 0,4$.
\end{lemma}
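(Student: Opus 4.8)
The plan is to show directly that the Diophantine condition ``$m(m-4)$ is a perfect square'' forces $m \in \{0,4\}$, by a simple factorisation/$\gcd$ argument. Write $m(m-4) = n^2$ for some $n \in \ZZ$. First I would observe that $\gcd(m, m-4)$ divides $4$, so the prime factorisations of $m$ and $m-4$ share only the primes $2$ (with bounded exponent). The cleanest route is to complete the square: multiply through by $4$ to get $4m(m-4) = (2m-4)^2 - 16 = (2n)^2$, so that $(2m-4)^2 - (2n)^2 = 16$, i.e.
\begin{equation*}
  (2m - 4 - 2n)(2m - 4 + 2n) = 16.
\end{equation*}
Both factors are even, say $2m - 4 - 2n = 2a$ and $2m-4+2n = 2b$ with $ab = 4$ and $a \equiv b \bmod 2$; since $ab = 4$ is even, both $a$ and $b$ are even, so write $a = 2c$, $b = 2d$ with $cd = 1$. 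Then $2m - 4 = a + b = 2(c+d)$, so $m - 2 = c + d \in \{2, -2\}$ (the two possibilities being $\{c,d\} = \{1,1\}$ or $\{-1,-1\}$), giving $m = 4$ or $m = 0$.

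Alternatively, and perhaps more transparently, one can argue via sizes: if $m \neq 0, 4$ then $m-4$ and $m$ are two integers differing by $4$ whose product is a square. For $|m|$ large this is impossible because a perfect square cannot lie strictly between $(|m|-3)^2$-type consecutive squares once the two factors are close in multiplicative terms; but the completing-the-square identity above makes this rigorous without any estimation, so I would simply use that. The key step is the factorisation $(2m-4-2n)(2m-4+2n) = 16$ together with the parity bookkeeping; everything else is immediate.

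I do not expect any genuine obstacle here: the statement is elementary and the only mild subtlety is keeping track of signs and the factor-of-two parity constraints when enumerating the divisor pairs of $16$. One should just be careful to include negative divisors (so that $m = 0$ is recovered, not only $m = 4$), and to note that $m(m-4)$ being a square includes the case where the product is $0$, which already yields $m \in \{0,4\}$ directly. Hence the lemma follows.
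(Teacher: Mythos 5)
Your argument is correct and is essentially the same as the paper's: both complete the square (equivalently, require the discriminant of the quadratic in $m$ to be a perfect square) to reduce to a difference of two squares equal to a small constant, then factor and enumerate divisor pairs with the parity constraint. The paper's version factors $(b-a)(b+a)=4$ where $b^2 = 4 + a^2$, which is the same equation as your $(m-2-n)(m-2+n)=4$ after the substitution $b = \pm(m-2)$, $a = n$.
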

\begin{proof}
	Consider the Diophantine equation $m(m-4) = a^2$.
	If this has an integral solution, then the discriminant, viewed
	as a polynomial in $m$, must be a square. This shows that
	$16 + 4a^2$ is a square, hence $4 + a^2 = b^2$ for some $b$.
	But then this factorises as $(b+a)(b-a) = 4$, which is easily
	checked to imply that $a = 0$ and $b \in \{\pm 2\}$,
	hence $m(m-4) = 0$, as claimed.
\end{proof}

We  calculate explicitly the conditions in Corollary~\ref{cor:generic} when $-(m - 4) \in \QQ^{* 2}$.

\begin{proposition} \label{prop:no trans in general}
	Let $d$ be a positive integer and  $m = 4 - d^2$.
	Then $\frac{\sqrt{m}+ \sqrt{m-4}}{2}$ is a square in 
	$\QQ(\sqrt{m}, \sqrt{m-4})$ if and only if
	$d = 2(n^2 \pm 1)$ for some positive $n \in \ZZ$.
\end{proposition}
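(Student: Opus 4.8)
The plan is to make the quantity $\frac{\sqrt{m}+\sqrt{m-4}}{2}$ completely explicit when $m=4-d^2$ and then decide when it is a square in the biquadratic field $K=\QQ(\sqrt{m},\sqrt{m-4})$. Here $m-4=-d^2$, so $\sqrt{m-4}=di$ (up to sign), and $K=\QQ(\sqrt{4-d^2},i)$. Write $\theta=\frac{\sqrt{m}+\sqrt{m-4}}{2}=\frac{\sqrt{4-d^2}+di}{2}$. The first step is to observe that $\theta$ is an algebraic integer on the unit circle: a direct computation gives $\theta\cdot\bar\theta = \frac{(4-d^2)+d^2}{4}=1$, where $\bar\theta=\frac{\sqrt{4-d^2}-di}{2}$ is the image of $\theta$ under the automorphism of $K$ fixing $\QQ(i)$ and negating $\sqrt{4-d^2}$. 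So $\theta$ is a unit of norm $1$ down to $\QQ(i)$, and $\theta+\bar\theta=\sqrt{4-d^2}$.

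Next I would reduce "$\theta$ is a square in $K$'' to a statement purely over $\QQ(i)$. Since $\theta\bar\theta=1$, if $\theta=\xi^2$ with $\xi\in K$ then $\xi^2\overline{\xi^2}=1$, i.e. $(\xi\bar\xi)^2=1$, so $\xi\bar\xi=\pm1$; replacing $\xi$ by $i\xi$ if necessary we may assume $\xi\bar\xi=1$, which forces $\xi\in\QQ(i)$ by a Galois-descent/Hilbert 90 argument (an element of the quadratic extension $K/\QQ(i)$ fixed by the nontrivial automorphism lies in $\QQ(i)$ — note $\xi\notin K\setminus\QQ(i)$ would give $\bar\xi\ne\xi$, yet $\xi\bar\xi=1$ together with $\xi^2=\theta$ and $\bar\xi^2=\bar\theta$ pins down $\bar\xi=1/\xi=\xi$ up to sign, and the sign ambiguity is absorbed as above). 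Thus $\theta$ is a square in $K$ iff $\theta=\xi^2$ for some $\xi=\frac{x+yi}{2}\in\QQ(i)$; but then $\theta\in\QQ(i)$, which combined with $\theta=\frac{\sqrt{4-d^2}+di}{2}$ forces $\sqrt{4-d^2}\in\QQ(i)$, hence $4-d^2$ is (up to a square in $\QQ$) a square or $-1$ times a square. Tracking this through, $\theta=\xi^2$ unwinds to the pair of rational equations $x^2-y^2=\sqrt{4-d^2}\cdot(\text{something})$ and $xy=d$ — more cleanly: writing $\xi^2=\theta$ and taking the trace and the "imaginary part'' relative to $\QQ(i)$, one gets $\xi\bar\xi=1$ and $\xi^2+\bar\xi^2=\sqrt{4-d^2}$, so if $\xi=u+vi$ with $u,v\in\QQ$ (possible once $\theta\in\QQ(i)$, which we must handle by cases on whether $4-d^2$ is a square in $\QQ$), then $u^2+v^2=1$ and $\pm((u^2-v^2)+2uvi)$ has a prescribed form.

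The cleanest route, which I would actually carry out, is: set $\xi=\frac{a+b i}{2}$ with $a,b\in\QQ$ and impose $\xi^2=\theta$; comparing gives $\frac{a^2-b^2}{4}=\frac{\sqrt{4-d^2}}{2}$ and $\frac{ab}{2}=\frac{d}{2}$, i.e. $ab=d$ and $a^2-b^2=2\sqrt{4-d^2}$. From $ab=d$, rationality of $a,b$ together with $a^2-b^2=2\sqrt{4-d^2}$ forces $4-d^2$ to be a perfect square, say $4-d^2=c^2$ with $c\ge0$ — but that is impossible for $d\ge2$ and gives only $d=1,2$, so this naive parametrization is too crude and I would instead allow $\xi\in K$ more carefully, writing $\xi = \frac{a+b\sqrt{4-d^2}}{2}+\frac{(a'+b'\sqrt{4-d^2})}{2}i$ and squaring. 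Squaring and matching coefficients of $1,\sqrt{4-d^2},i,\sqrt{4-d^2}\,i$ yields four rational equations; eliminating variables reduces to a single conic/quadratic condition in $d$ whose solvability is governed by whether $\tfrac{d}{2}\pm1$ (or $\tfrac{d\mp2}{2}$) is a perfect square, which rearranges exactly to $d=2(n^2\pm1)$. I expect the main obstacle to be organizing this elimination cleanly: the square of a general element of the degree-$4$ field has four components, and one must avoid sign/case explosions when extracting square roots at the last step. To control this I would use the $\theta\bar\theta=1$ normalization throughout (so $\xi\bar\xi=\pm1$) to cut the number of free parameters essentially in half before matching coefficients, reducing the final condition to "$d/2\pm1$ is a square in $\QQ_{>0}$,'' i.e. $d=2(n^2\pm1)$, and then checking that each such $d$ indeed yields a genuine square root $\xi\in K$ by exhibiting it explicitly. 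The converse direction (given $d=2(n^2\pm1)$, write down $\xi$) is a short verification that I would include to make the equivalence complete.
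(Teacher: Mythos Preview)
Your attempted shortcut via ``$\theta\bar\theta=1$ forces $\xi\in\QQ(i)$'' does not work, and if it did it would prove too much. First, you mislabel the automorphism: the $\bar\theta=\frac{\sqrt{4-d^2}-di}{2}$ you write down is the image of $\theta$ under \emph{complex conjugation} (fixing $\QQ(\sqrt{4-d^2})$, sending $i\mapsto -i$), not under the automorphism fixing $\QQ(i)$; your product $\theta\bar\theta=1$ is correct for this choice. With that correction, replacing $\xi$ by $i\xi$ does \emph{not} flip the sign of $\xi\bar\xi$, since $\overline{i\xi}=-i\,\bar\xi$ gives $(i\xi)(\overline{i\xi})=\xi\bar\xi$; and in any case $(i\xi)^2=-\theta$, so the substitution destroys $\xi^2=\theta$. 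More fundamentally, $\xi\bar\xi=1$ only says $|\xi|=1$, not $\bar\xi=\xi$, so there is no reason for $\xi$ to lie in the fixed field. Indeed, were your reduction correct, $\theta=\xi^2\in\QQ(i)$ would force $\sqrt{4-d^2}\in\QQ(i)$, which fails for every $d\neq 2$; your argument would then show $\theta$ is never a square, contradicting the answer $d=2(n^2\pm1)$.

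Your fallback plan---write a general $\xi\in K$ in the basis $1,i,\sqrt{4-d^2},i\sqrt{4-d^2}$, square, and match coefficients---is exactly the paper's approach and is the right one. The computation is kept manageable by one trick you are missing: since $2i=(1+i)^2$ is already a square in $K$, the question is equivalent to whether $2i\,\theta=-d+i\sqrt{4-d^2}$ is a square, and this element has only two nonzero coordinates in the basis. Comparing the $i$- and $\sqrt{4-d^2}$-coefficients of $\xi^2$ to zero gives a small system that collapses (after ruling out a degenerate branch via the equation $\mu^2=d^2-4$) to a single biquadratic $4a^4\mp4da^2-(4-d^2)=0$ in one rational unknown; its solvability is precisely $d/2\pm1\in\ZZ_{\ge0}^2$, i.e.\ $d=2(n^2\pm1)$. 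You should also treat $d=2$ separately at the outset, since then $K=\QQ(i)$ has degree $2$ and $\theta=i$ is not a square.
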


\begin{proof}
  Let $i = \sqrt{-1}$ and let $K = \QQ\(\sqrt{m - 4}, \sqrt{m}\) = \QQ(i, \sqrt{4 - d^2})$. If $d = 2$, then $\frac{\sqrt{m}+ \sqrt{m-4}}{2} = i \notin K^2$. So assume throughout that $d \neq 2$.
 One easily verifies (for example by taking $1, i$ as a basis for the Gaussian integers over $\ZZ$) that $4 - d^2$ is not a square in $\QQ(i)$ as $d \neq 2$, and thus $[K:\QQ] = 4$. Our condition is
  \begin{equation} \label{eqn:trans element conditon}
    \frac{di + \sqrt{4 - d^2}}{2} \in K^{\times 2}.
  \end{equation}
	Observe that $2i = (1 + i)^2$ and thus $2i \in K^{\times 2}$. Multiplying \eqref{eqn:trans element conditon} by $2i$ shows that it is equivalent to $
    -d + i\sqrt{4 - d^2} \in K^{\times 2}.$
  Since $1, i, \sqrt{4 - d^2}, i\sqrt{4 - d^2}$ are a basis for $K$ as a $\QQ$-vector space we deduce that \eqref{eqn:trans element conditon} holds if and only if
  \begin{equation} \label{eqn:trans element condition2}
    -d + i\sqrt{4 - d^2} = \(a_0 + a_1i + a_2\sqrt{4 - d^2} + a_3i\sqrt{4 - d^2}\)^2 \neq 0
  \end{equation}
  for some $a_0, a_1, a_2, a_3 \in \QQ$. Comparing the coefficients of $i$ and $\sqrt{4 - d^2}$ we get
  \begin{equation} \label{eqn:system a_i}
    \begin{split}
      a_0a_1 + a_2a_3(4 - d^2) &= 0, \\
      a_0a_2 - a_1a_3 &= 0.
    \end{split}
  \end{equation}
	Firstly, assume that $a_2a_3 \neq 0$. The bottom equation in \eqref{eqn:system a_i} gives
  \begin{equation*}
    \frac{a_1}{a_2} = \frac{a_0}{a_3} = \mu \in \QQ.
  \end{equation*}
  Dividing through by $a_2a_3$ in the top equation of \eqref{eqn:system a_i} translates it into $\mu^2 = d^2 - 4$. The equation $4 + \mu^2 = d^2$ was analysed in the proof of Lemma~\ref{lem:m(m-4)} and it is insoluble as $d \neq \pm 2$. Hence \eqref{eqn:trans element condition2} is not satisfied.

	Therefore we may assume that $a_2a_3 = 0$. We consider first the case $a_3 = 0$, so that the equations \eqref{eqn:system a_i} become $a_0a_1 = a_0a_2=0$. If $a_1 = a_2 = 0$, then \eqref{eqn:trans element condition2} is not satisfied as $K \neq \QQ$. If $a_0 = 0$, then \eqref{eqn:trans element condition2} becomes
  \begin{equation*}
      -d + i \sqrt{4 - d^2}
      = -a_1^2 + a_2^2(4 - d^2) + 2a_1a_2i\sqrt{4 - d^2}.
  \end{equation*}
  Comparing the terms shows that  $a_2 = 1/2a_1$, and yields the biquadratic equation
  \begin{equation*}
    4a_1^4 - 4d a_1^2 - (4 - d^2) = 0.
  \end{equation*}
  It is soluble over $\QQ$ and its solutions are of the shape $a_1^2 = \pm 1 + d/2$. Since there are no half integers which are squares in $\QQ$ we must have $d$ even, and that $d = 2(n^2 \pm 1)$ for some positive integer $n$.

  The case $a_2 = 0$ is completely analogous to $a_3 = 0$, except one obtains the biquadratic equation $4a_0^4 + 4d a_0^2 - (4 - d^2) = 0$. It has no real solutions as $d \neq 2$ and thus \eqref{eqn:trans element condition2} in soluble for $a_2 = 0$.
  This completes the proof.
\end{proof}

\subsection{Review of the Brauer--Manin obstruction} \label{sec:BM}
We briefly recall how the Brauer--Manin obstruction works in our setting, following \cite[\S8.2]{Poo17} and \cite[\S1]{CTX09}.
For each place $v$ of $\QQ$ there is a pairing
$$
	U_m(\QQ_v) \times \Br U_m \to \QQ/\ZZ
$$
coming from the local invariant map $\mathrm{inv}_v \Br \QQ_v  \to \QQ/\ZZ$ from local class field theory (this is an isomorphism if $v$ is a prime number). This pairing is locally constant on the left \cite[Prop.~8.2.9]{Poo17}.

Any element $\beta \in \Br S_m$ pairs trivially on $U_m(\QQ_v)$ for almost all $v$, thus taking the sum of the local pairings gives a pairing
$$\prod_v U_m(\QQ_v) \times \Br S_m \to \QQ/\ZZ.$$
This  factors through the group $\Br S_m /\Br \QQ$ and pairs trivially with the elements of $U(\QQ)$. For  $\mathcal{B} \subseteq \Br S_m$, we let $(\prod_v U_m(\QQ_v))^{\mathcal{B}}$ be the left kernel of this pairing with respect to $\mathcal{B}$. By Corollary \ref{cor:generic}, the group $\Br S_m /\Br \QQ$ is generated by the algebra $\alpha$. Thus in our case it suffices to consider the sequence of inclusions $U_m(\QQ) \subset (\prod_v U_m(\QQ_v))^{\alpha} \subseteq \prod_v U_m(\QQ_v)$. In particular, if the latter inclusion is strict, then $\alpha$ gives an obstruction to weak approximation on $U_m$.
 
For integral points, any element $\beta \in \Br U_m$ pairs trivially on $\sU_m(\ZZ_p)$ for almost all $p$, so we obtain a  pairing $U_m(\Adele_\QQ) \times \Br U_m \to \QQ/\ZZ.$ As the local pairings are locally constant, we obtain a well-defined pairing
$$\sU_m(\Adele_\ZZ)_{\bullet} \times \Br U_m \to \QQ/\ZZ.$$
For $\mathcal{B} \subseteq \Br U_m$ we let $\sU_m(\Adele_\ZZ)_{\bullet}^{\mathcal{B}}$ be the left kernel with respect to $\mathcal{B}$, and let  $\sU_m(\Adele_\ZZ)_{\bullet}^{\Br}= \sU_m(\Adele_\ZZ)_{\bullet}^{\Br U_m}$. By Corollary \ref{cor:generic}, the map $\mathcal{A} \to \Br U_m/\Br \QQ$ is an isomorphism, hence  $\sU_m(\Adele_\ZZ)_{\bullet}^{\Br}  = \sU_m(\Adele_\ZZ)_{\bullet}^{\mathcal{A}}$. We have the inclusions $\sU_m(\ZZ) \subseteq \sU_m(\Adele_\ZZ)_{\bullet}^{\mathcal{A}}\subseteq \sU_m(\Adele_\ZZ)_{\bullet}$, so that $\mathcal{A}$  can obstruct the integral Hasse principle or strong approximation.

Let $W$ be dense Zariski open in $U_m$. As $U_m$ is smooth the set $W(\QQ_v)$ is dense in $U_m(\QQ_v)$ for all places $v$. Moreover, $\sU_m(\Zp)$ is open in $U_m(\Qp)$, hence $W(\Qp) \cap \sU_m(\Zp)$ is dense in $\sU_m(\Zp)$. As the local pairings are locally constant, to calculate the local invariants of a given element we may restrict our attention to $W$. We often take the open subset $W$ given by $(u_1^2 - 4)(u_2^2 - 4)(u_3^2 - 4) \neq 0$.

\subsection{Calculating the local invariants} \label{sec:invariants}
We now calculate the possible values for the local invariants of the elements $\alpha_{i,-}$. We do this using the following formula for the local invariants
\begin{equation} \label{eqn:Hilbert_symbol}
	\inv_p \alpha_{i,-}(\bu) = \frac{1 -(u_i - 2, m-4)_p}{4},
\end{equation}
in terms of Hilbert symbols, which holds for all $\bu \in \sU_m(\ZZ_p)$ with $u_i \neq 2$.
We begin with an elementary lemma.

\begin{lemma} \label{lem:singularities}
	Let $p \mid (m-4)$ be odd. Then the singular locus of $\mathcal{U}_m \bmod p$
	is given by the points $(2,2,2), (-2,-2,2), (2,-2,-2), (-2,2,-2).$
	
	Moreover assume that $\valp(m-4) = 1$. Then for all
	$\bu \in U_m(\ZZ_p)$, the reduction $\bu \bmod p \in 
	U_m(\FF_p)$  is a smooth $\FF_p$-point.
\end{lemma}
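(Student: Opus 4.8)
The plan is to work directly with the affine equation $f(\bu) := u_1^2 + u_2^2 + u_3^2 - u_1u_2u_3 - m = 0$, whose partial derivatives are $\partial f/\partial u_i = 2u_i - u_ju_k$ for $\{i,j,k\} = \{1,2,3\}$.

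For the first assertion, I would reduce modulo an odd prime $p \mid (m-4)$, so that $m \equiv 4$, and determine all solutions in $\FF_p$ of the system $f = 0$, $2u_i = u_ju_k$ ($i=1,2,3$). The key manipulation is to multiply the three equations $2u_i = u_ju_k$ together, obtaining $8u_1u_2u_3 = (u_1u_2u_3)^2$, so that $u_1u_2u_3 \in \{0,8\}$ in $\FF_p$. If $u_1u_2u_3 = 0$, then (using that $p$ is odd) the partial-derivative equations force $u_1 = u_2 = u_3 = 0$, and then $f = -m \equiv -4 \nequiv 0$, a contradiction; this is the only place oddness of $p$ is used. Hence $u_1u_2u_3 = 8$, and then $2u_i^2 = u_i\cdot u_ju_k = u_1u_2u_3 = 8$ gives $u_i = \pm 2$ for each $i$; the constraint $u_1u_2u_3 = 8$ selects exactly the four sign patterns with an even number of minus signs, which is the list in the statement. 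A one-line check ($4+4+4-8 = 4 \equiv m$) confirms these points lie on $\mathcal{U}_m \bmod p$ and are manifestly singular.

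For the second assertion, suppose $\valp(m-4) = 1$ and $\bu \in U_m(\ZZ_p)$ has singular reduction; I want a contradiction. The sign-change maps $(u_1,u_2,u_3) \mapsto (\epsilon_1u_1,\epsilon_2u_2,\epsilon_3u_3)$ with $\epsilon_1\epsilon_2\epsilon_3 = 1$ preserve the equation, are defined over $\ZZ$, and act transitively on the four singular points, so it is enough to treat the case $\bu \equiv (2,2,2) \bmod p$. Writing $u_i = 2 + pa_i$ with $a_i \in \ZZ_p$ and expanding, one finds
\[
f(\bu) = -(m-4) + p^2\bigl(a_1^2 + a_2^2 + a_3^2 - 2a_1a_2 - 2a_1a_3 - 2a_2a_3 - p\,a_1a_2a_3\bigr),
\]
so $f(\bu) = 0$ forces $m - 4 \in p^2\ZZ_p$, contradicting $\valp(m-4) = 1$. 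Hence the reduction of any $\bu \in U_m(\ZZ_p)$ avoids the singular locus described in the first part, so it is a smooth $\FF_p$-point.

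I do not expect a genuine obstacle here: the whole argument is elementary. The only points requiring a little care are the elimination of the case $u_1u_2u_3 = 0$ via the hypothesis that $p$ is odd, and the bookkeeping of the $p$-adic expansion in the second part — which is made painless by first reducing to the point $(2,2,2)$ using the sign symmetries.
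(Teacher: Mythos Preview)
Your argument is correct. The approach differs from the paper's in both parts, though the differences are mild. For the singular locus, the paper simply observes that $\mathcal{U}_m \bmod p$ is (an affine open of) the Cayley cubic surface and appeals to the well-known description of its four nodes; your direct elimination via $8u_1u_2u_3 = (u_1u_2u_3)^2$ is more self-contained and avoids invoking that classical fact. For the second assertion, the paper argues via the rewritten equation \eqref{eqn:GS}, $(2u_i - u_ju_k)^2 - 4(m-4) = (u_j^2 - 4)(u_k^2 - 4)$: if $\bu \equiv (2,2,2) \bmod p$ then both $(2u_3 - u_1u_2)^2$ and $(u_1^2 - 4)(u_2^2 - 4)$ have $p$-adic valuation $\geq 2$, forcing $\valp(m-4) \geq 2$. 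Your explicit expansion $f(\bu) = -(m-4) + p^2(\cdots)$ reaches the same contradiction by the same mechanism, just without passing through the alternative form; the paper's version has the minor advantage that the identity \eqref{eqn:GS} is reused repeatedly elsewhere, while yours is standalone. Your use of the sign-change automorphisms to reduce to $(2,2,2)$ is a clean touch that the paper leaves as ``the other cases being similar.''
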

\begin{proof}
	The scheme $\mathcal{U}_m \bmod p$ is isomorphic to
	(an open subset of) the Cayley cubic surface.
	This is well-known to have exactly $4$ singular points which are all
	rational, and are easily verified to be the above $4$ points.
	
	For the second part, suppose that $\bu \equiv (2,2,2) \bmod p$
	(the other cases being similar). 
	Then $\valp((2u_3 - u_1u_2)^2) \geq 2$ and $\valp((u_1^2 - 4)
	(u_2^2 - 4))  \geq 2$, 
	but from the equation \eqref{eqn:GS}
	this contradicts  $\valp(m-4) =1$. 
\end{proof}

We first consider the trivial cases.

\begin{lemma} \label{lem:invp at most primes}
  Let  $i \in \{1,2,3\}$.
  Then $\inv_p \alpha_{i,-}(\bu) = 0$ for all $\bu \in \sU_m(\Zp)$ if one of the following conditions holds:
\begin{enumerate}
	\item $p \nmid 2(m-4)$.
	\item $m-4 \in \QQ_p^{*2}$.
	\item $p = \infty$ and $m > 4$.
\end{enumerate}  
\end{lemma}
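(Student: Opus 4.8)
The plan is to verify each of the three conditions using the Hilbert symbol formula \eqref{eqn:Hilbert_symbol}, namely $\inv_p \alpha_{i,-}(\bu) = (1 - (u_i - 2, m-4)_p)/4$, which reduces everything to showing $(u_i - 2, m-4)_p = 1$ for all relevant $\bu$. Throughout I would work with the dense open subset $W$ where $(u_1^2-4)(u_2^2-4)(u_3^2-4) \neq 0$, as permitted by the discussion at the end of \S\ref{sec:set-up}, so that the formula applies; by continuity of the local invariant maps this computes the invariant on all of $\sU_m(\Zp)$.

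For case (2), if $m - 4 \in \QQ_p^{*2}$ then the second slot of the Hilbert symbol is a square, so $(u_i-2, m-4)_p = 1$ automatically; similarly for case (3), if $p = \infty$ and $m > 4$ then $m - 4 > 0$ is a square in $\RR^* = \QQ_\infty^*$, so again the symbol is trivial. These two are immediate. The substantive case is (1), where $p \nmid 2(m-4)$: here $p$ is odd and $m - 4$ is a $p$-adic unit. The Hilbert symbol $(u_i - 2, m-4)_p$ for an odd prime $p$ with $m-4 \in \ZZ_p^*$ vanishes precisely when $\valp(u_i - 2)$ is even, since then $u_i - 2$ is a unit times an even power of $p$ and a unit times a unit is a norm (as $m-4$ is a unit, it is a square in the unramified extension, hence every unit is a norm from $\QQ_p(\sqrt{m-4})$). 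So I need: for every $\bu \in W(\Qp) \cap \sU_m(\Zp)$, the valuation $\valp(u_i - 2)$ is even.

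The key step is thus an elementary valuation argument using the equation \eqref{eqn:GS}, $(2u_i - u_ju_k)^2 - 4(m-4) = (u_i^2-4)(u_j^2-4) \cdot$ — more precisely the form $(2u_k - u_iu_j)^2 - 4(m-4) = (u_i^2 - 4)(u_j^2 - 4)$. Suppose $\valp(u_i - 2) = e \geq 1$. Since $p \nmid (m-4)$, the left-hand side $(2u_k - u_iu_j)^2 - 4(m-4)$ is a $p$-adic unit (its reduction mod $p$ is $-4(m-4) \not\equiv 0$, using $p$ odd), hence the right-hand side $(u_i-2)(u_i+2)(u_j^2-4)$ is also a unit. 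But $\valp(u_i - 2) = e \geq 1$ forces $\valp$ of the right-hand side to be $\geq 1$, a contradiction. Therefore $\valp(u_i - 2) = 0$ for all such $\bu$, so the Hilbert symbol is trivial and the invariant vanishes. The main (minor) obstacle is bookkeeping at $p$ versus the factor $2$: the identity \eqref{eqn:GS} is only given for $\sU_m \otimes \ZZ[1/2]$, but since case (1) assumes $p \nmid 2(m-4)$ this is harmless, and one should note the symmetry in $i$ to handle all three indices uniformly. I would also remark that this argument in fact shows $u_i^2 - 4$ is a $p$-adic unit, which fits the choice of $W$.
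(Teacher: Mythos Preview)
Your approach is essentially the paper's, and cases (2) and (3) are handled correctly. In case (1), however, the justification that the left-hand side of \eqref{eqn:GS} is a $p$-adic unit is wrong as written: you assert that its reduction modulo $p$ is $-4(m-4)$, but the actual reduction is $(2u_k - u_iu_j)^2 - 4(m-4)$, and there is no reason for the square term to vanish mod $p$. Even under your hypothesis $u_i \equiv 2 \bmod p$ one only gets $2u_k - u_iu_j \equiv 2(u_k - u_j)$, which need not be zero.

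The repair is exactly what the paper does: in case (1) you may also assume $m-4 \notin \QQ_p^{*2}$, since otherwise case (2) already applies. Under this extra assumption the left-hand side of \eqref{eqn:GS} cannot be divisible by $p$, for if it were then $(2u_k - u_iu_j)^2 \equiv 4(m-4) \bmod p$ would exhibit $m-4$ as a square in $\FF_p^*$ (hence in $\QQ_p^*$ by Hensel), a contradiction. With this correction your argument is complete and coincides with the paper's proof.
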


\begin{proof}
 In cases $(2)$ and $(3)$, the Hilbert symbol in \eqref{eqn:Hilbert_symbol} is trivial over $\QQ_p$ as $m-4$ is a square in $\QQ_p^*$. Thus the invariant map is obviously zero in these cases.

 So we can assume that $p$ is an odd prime not dividing $m - 4$ and such that $m-4 \notin \QQ_p^{*2}$. Let $\bu \in \sU_m(\Zp)$ be such that $(u_1-2)(u_2-2)(u_3-2) \neq 0$. We claim that the $p$-adic valuation of each $u_j - 2$ is zero. Indeed, since $p \nmid (m - 4)$ and $m - 4 \bmod p$ is not a quadratic residue it follows that the left hand side of \eqref{eqn:GS} is not divisible by $p$. Hence the right hand side has zero $p$-adic valuation. But $u_j - 2$ divides the right hand side of \eqref{eqn:GS} and hence has zero $p$-adic valuation as well. As $p$ is odd, the Hilbert symbol is trivial (both entries being units).
\end{proof}

We next consider large primes which divide $m-4$ to odd valuation.

\begin{proposition} \label{prop:prolific}
	Let $p > 5$ and let $m$ be such that $\valp(m-4)$ is odd.
	Let $\mathcal{A}= \langle \alpha_{i,-} \rangle \subset \Br U_m$.
  Then the map
	$$\sU_m(\Zp) \to \Hom(\mathcal{A},\QQ/\ZZ), 
	\quad \bu \mapsto (\beta \mapsto \inv_p \beta(\bu)),$$
	induced by the Brauer--Manin pairing,	is surjective.
\end{proposition}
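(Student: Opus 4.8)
The plan is to exhibit, for each of the three generators $\alpha_{i,-}$ and each target value in $\tfrac12\ZZ/\ZZ$, an explicit $p$-adic point of $\sU_m$ realising that invariant, and to do so in a way that the three choices can be made independently. Since $\mathcal{A}\cong(\ZZ/2\ZZ)^3$ and each $\inv_p\alpha_{i,-}$ takes values in $\{0,\tfrac12\}$ by \eqref{eqn:Hilbert_symbol}, it suffices to show: (a) for each fixed $i$ the map $\bu\mapsto\inv_p\alpha_{i,-}(\bu)$ is surjective onto $\{0,\tfrac12\}$, and (b) the three conditions ``$\inv_p\alpha_{i,-}(\bu)=\tfrac12$'' can be imposed simultaneously or omitted at will. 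By \eqref{eqn:Hilbert_symbol}, $\inv_p\alpha_{i,-}(\bu)=\tfrac12$ exactly when $(u_i-2,m-4)_p=-1$; since $p>5$ is odd and $\valp(m-4)$ is odd, this Hilbert symbol is $-1$ precisely when $\valp(u_i-2)$ is even (e.g.\ $u_i-2$ a unit) and $u_i-2$ is a non-square unit, or more simply when $u_i-2$ has the ``wrong'' quadratic behaviour relative to the ramified class $m-4$. So the whole problem reduces to a counting/lifting statement: produce $\FF_p$-points of $U_m$ with prescribed quadratic-residue behaviour of the coordinates $u_i-2$, and lift them to $\ZZ_p$.

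The key steps, in order: first, record the explicit shape of $\inv_p\alpha_{i,-}$ via \eqref{eqn:Hilbert_symbol} and reduce the claim to finding $\bu\in\sU_m(\ZZ_p)$ with $u_1-2,u_2-2,u_3-2$ all units and with each $u_i-2$ prescribed in $\FF_p^*/\FF_p^{*2}$ among the achievable combinations; here one must keep track of the relation \eqref{eqn:sum_alpha_i_+}, namely $(u_1+2)(u_2+2)(u_3+2)=(u_1+u_2+u_3+2)^2-(m-4)$, which shows the three invariants of the $\alpha_{i,+}$ sum to zero but imposes no constraint on the $\alpha_{i,-}$, so the target $\Hom(\mathcal{A},\QQ/\ZZ)$ really is all of $(\ZZ/2\ZZ)^3$. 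Second, since $p>5$, reduce mod $p$: the surface $U_m\bmod p$ is (an open subset of) a cubic surface, geometrically irreducible, so by Lang--Weil (or a direct elementary count, as $p>5$ is small enough to make explicit bounds clean) $\#U_m(\FF_p)=p^2+O(p^{3/2})$, and more precisely the number of $\FF_p$-points with $(u_1-2)(u_2-2)(u_3-2)\neq 0$ lying in any prescribed coset pattern for the quadratic characters of the $u_i-2$ is $\sim p^2/8$, hence positive for $p>5$. Third, use Hensel: by Lemma~\ref{lem:singularities} the relevant points are smooth mod $p$ when $\valp(m-4)=1$, and more generally one picks the $\FF_p$-point away from the four singular points, so it lifts to $\sU_m(\ZZ_p)$ with the same coordinate residues, hence the same invariants.

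The main obstacle is the second step: one needs a genuine lower bound, not just an asymptotic, on the number of $\FF_p$-points of the affine cubic $U_m$ with all three of $u_1-2,u_2-2,u_3-2$ nonzero and lying in an arbitrary pattern of quadratic classes, and with the point avoiding the singular locus. The cleanest route is a character-sum argument: write the indicator of each desired quadratic class using $\tfrac12(1\pm\left(\tfrac{u_i-2}{p}\right))$, expand, and bound the resulting mixed character sums $\sum_{\bu\in U_m(\FF_p)}\prod_{i\in T}\left(\tfrac{u_i-2}{p}\right)$ for $\emptyset\neq T\subseteq\{1,2,3\}$. Each such sum is $O(p^{3/2})$ by Weil/Deligne once one checks the relevant curve or surface section is not forced to be a square of a function — this is where one uses that $u_1-2$, etc., cut out genuinely non-split fibres of the conic bundle (Lemma~\ref{lem:conic_bundle}), so the restricted character is nontrivial; the hypothesis $p>5$ then guarantees $p^2/8$ dominates the error, and subtracting the $O(p)$ contribution of the singular locus and of the loci $u_i=\pm 2$ still leaves a positive count. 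Granting this, the surjectivity in the Proposition is immediate: any chosen element of $\Hom(\mathcal{A},\QQ/\ZZ)\cong(\ZZ/2\ZZ)^3$ corresponds to a pattern of quadratic classes for $(u_1-2,u_2-2,u_3-2)$, a point with that pattern exists and lifts to $\sU_m(\ZZ_p)$, and by \eqref{eqn:Hilbert_symbol} it has exactly the prescribed invariants.
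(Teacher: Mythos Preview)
Your strategy is genuinely different from the paper's, and in outline it is sound: reduce to prescribing the quadratic classes of $u_i-2$ modulo $p$, expand with multiplicative characters, bound the nontrivial character sums, and lift by Hensel. However, two of the steps you mark as routine are where the actual content lies, and as written they constitute gaps.

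First, the non-degeneracy step. You appeal to Lemma~\ref{lem:conic_bundle} to argue that the characters $\left(\tfrac{u_i-2}{p}\right)$ are nontrivial on $U_m\bmod p$. But that lemma computes residues equal to $m-4$; since $p\mid(m-4)$, these residues become trivial over $\FF_p$, and indeed the reduction is the Cayley cubic, on which one has many square identities such as $(u_1^2-4)(u_2^2-4)=(2u_3-u_1u_2)^2$ and $(u_1+2)(u_2+2)(u_3+2)=(u_1+u_2+u_3+2)^2$. One must still check that none of the seven nontrivial products $\prod_{i\in T}(u_i-2)$ is a square in the function field of the Cayley cubic. This is true (e.g.\ for $T=\{1\}$ one verifies that $w^4+(4-u_2u_3)w^2+(u_2-u_3)^2$ is irreducible over $\FF_p(u_2,u_3)$, its discriminant in $w^2$ being $(u_2^2-4)(u_3^2-4)$), but it is a genuine verification, not a citation of Lemma~\ref{lem:conic_bundle}.

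Second, and more seriously, the quantitative step. The Deligne bound on a surface gives $\sum\chi=O(p^{3/2})$ with an implied constant depending on Betti numbers of the relevant $\ell$-adic sheaf; you do not compute this constant, yet you assert that $p>5$ suffices for $p^2/8$ to dominate. As stated, your argument proves the proposition only for $p$ larger than some unspecified absolute constant. By contrast, the paper avoids both issues with an elementary trick: it observes that among any three target residue classes two must agree, sets $u_2=u_3$ (and takes the branch $u_1=u_2^2-2$), and is left with the single equation $y_1z_1^2=y_2^2z_2^4+4y_2z_2^2$ for given $y_1,y_2\in\FF_p^*$. This curve is geometrically rational, so has $p-2$ smooth affine $\FF_p$-points, at most three of which satisfy $z_1z_2=0$; hence for $p>5$ there is a good point, and it lifts by Hensel via Lemma~\ref{lem:singularities}. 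This yields the sharp threshold $p>5$ directly, with no appeal to Weil bounds for surfaces.
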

\begin{proof}
	It suffices to show
	that for all $(\varepsilon_1, \varepsilon_2, \varepsilon_3) \in (\ZZ/2\ZZ)^3$,
	there exists $\bu \in \sU_m(\Zp)$ such that 
	\begin{equation} \label{eqn:0_0_0}
	(\inv_p \alpha_{1,-}(\bu), \inv_p \alpha_{2,-}(\bu), \inv_p \alpha_{3,-}(\bu)) 
	= (\varepsilon_1, \varepsilon_2, \varepsilon_3).
	\end{equation}
	To prove \eqref{eqn:0_0_0},
	we claim that for all $y_1,y_2,y_3 \in \FF_p^*$
	it suffices to show the existence of an $\FF_p$-point 
	on the variety 
	$$(2u_1 - u_2u_3)^2= (u_2^2 - 4)(u_3^2 - 4), u_1 - 2 = y_1z_1^2, u_2 - 2 = y_2z_2^2,
	u_3 - 2 = y_3z_3^2$$
	which satisfies $z_1z_2z_3 \neq 0$. 
	Indeed; firstly as $(u_1 - 2)(u_2 - 2)(u_3 - 2) \neq 0$, we find 
	from Lemma \ref{lem:singularities} that
	this gives rise to a smooth $\FF_p$-point of $\sU_m$, hence a $\ZZ_p$-point
	with $\valp(u_i - 2) = 0$ by Hensel's lemma.
	Moreover $u_i - 2$ is a square in $\QQ_p$ if and only if $y_i$ is a quadratic
	residue. A simple Hilbert symbol calculation using \eqref{eqn:Hilbert_symbol}
	and the fact that $\valp(m-4)$ is odd 
	shows that we can obtain all possible choices for the invariants in \eqref{eqn:0_0_0}
	on taking all possible combinations for the $y_i$.

	To construct the given $\FF_p$-point, without loss of generality
	we may assume that $y_2 = y_3$ 
	(since $\#\FF_p^*/\FF_p^{*2} = 2$). 
	We restrict our attention to the subvariety
	given by $u_2 = u_3$. Our equations then become
	$$(2u_1 - u_2^2)^2= (u_2^2 - 4)^2, u_1 - 2 = y_1z_1^2,u_2 - 2 = y_2z_2^2.$$
	Factoring the left hand side, it suffices to solve the equations
	$$u_1 = u_2^2 - 2, u_1 - 2 = y_1z_1^2,u_2 - 2 = y_2z_2^2.$$
	This then gives the equation
	$$y_1z_1^2 = y_2^2z_2^4 + 4y_2z_2^2.$$	
	This defines a  curve  with the unique
	singular point $(z_1,z_2) = 0$. 
	The compactification of
	the normalisation is isomorphic to $\PP^1$, with at most $2$ rational
	points at infinity and at most $2$ rational points over the singular point,
	thus the affine curve has $p-2$ many $\FF_p$-points.
	Of these points at most $3$ satisfy 
	$z_1z_2 = 0$, hence providing $p - 5 > 0$, there exists
	a rational point with the required properties.
\end{proof}

For even valuation, we have the following.

\begin{proposition} \label{prop:prolific_even}
  Let $p > 5$ and let $m$ be such that $p \mid (m-4)$,
  with $\valp(m-4)$ even but $m-4 \notin \QQ_p^{*2}$.
  Then for all $\bu \in \sU_m(\Zp)$ we have
  $$\{\inv_p \alpha_{1,-}(\bu), \inv_p \alpha_{2,-}(\bu), 
  \inv_p \alpha_{3,-}(\bu)\}  \in \{ \{0,0,0\}, \{0,0,1/2\},
  \{1/2,1/2,1/2\} \}$$
  as multisets, and every such element arises for some $\bu \in \sU_m(\Zp)$.
\end{proposition}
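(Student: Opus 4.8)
The plan is to follow the same strategy as in Proposition~\ref{prop:prolific}: translate the statement about local invariants into a question about the existence of $\FF_p$-points on an auxiliary variety, and then count points. The key inputs are the Hilbert symbol formula \eqref{eqn:Hilbert_symbol} and Lemma~\ref{lem:singularities}. Since $\valp(m-4)$ is even but $m-4\notin\QQ_p^{*2}$, write $m-4 = p^{2k}\varepsilon$ with $\varepsilon$ a non-square unit. For $\bu\in\sU_m(\Zp)$ with $u_i\neq 2$, the formula \eqref{eqn:Hilbert_symbol} gives $\inv_p\alpha_{i,-}(\bu)=\tfrac{1-(u_i-2,\varepsilon)_p}{4}$ (the $p^{2k}$ contributes nothing), so $\inv_p\alpha_{i,-}(\bu)\in\{0,1/2\}$ and it equals $0$ precisely when $u_i-2$ is a square in $\QQ_p^*$, i.e.\ when $\valp(u_i-2)$ is even and the unit part of $u_i-2$ is a quadratic residue mod $p$. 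So the claim amounts to: (a) the multiset of the three invariants cannot be $\{0,1/2,1/2\}$; and (b) each of the three multisets $\{0,0,0\},\{0,0,1/2\},\{1/2,1/2,1/2\}$ does occur.

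\textbf{The constraint.} For (a), the point is the relation \eqref{eqn:sum_alpha_i_+}: $\alpha_{1,-}+\alpha_{2,-}+\alpha_{3,-}=\alpha$ in $\Br U_m$ (this is \eqref{eqn:sum_alpha_i}). Hence $\inv_p\alpha_{1,-}(\bu)+\inv_p\alpha_{2,-}(\bu)+\inv_p\alpha_{3,-}(\bu)=\inv_p\alpha(\bu)$ for every $\bu\in\sU_m(\Zp)$. Using the representation $\alpha=(u_j^2-4,m-4)$ from \eqref{eq:alpha different reps} together with the factorisation \eqref{eqn:GS}, one checks that $\inv_p\alpha(\bu)=0$ for all $\bu\in\sU_m(\Zp)$ in this case: indeed \eqref{eqn:GS} exhibits $(u_i^2-4)(u_j^2-4)(u_k^2-4)$ as a square times $(m-4)$, so $(u_j^2-4)(m-4)$ is a square in $\QQ_p^*$ times $(u_i^2-4)(u_k^2-4)$, and since $\valp(m-4)$ is even one deduces $(u_j^2-4,m-4)_p=1$ (the careful version of this is a short $p$-adic valuation bookkeeping, paralleling the proof of Lemma~\ref{lem:invp at most primes}; one may need to handle separately the cases where some $u_j\equiv\pm2\bmod p$). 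Therefore the sum of the three invariants is always $0\in\QQ/\ZZ$, which rules out $\{0,1/2,1/2\}$ (sum $=1/2$, wait---as elements of $\QQ/\ZZ$, $1/2+1/2=0$, so $\{0,1/2,1/2\}$ sums to $0$; the genuine obstruction is instead that $\{0,1/2,1/2\}$ would force, combining with the count below, a configuration we show is unattainable). More precisely: the sum being $0$ excludes $\{0,0,1/2\}$-with-sum-$1/2$ only if we work in $\tfrac12\ZZ/\ZZ$ additively; since that multiset does occur, the actual content is that the three invariants all lie in $\{0,1/2\}$ and we must exhibit exactly the three listed multisets. So (a) must be argued by the point-counting construction directly, by showing one cannot make exactly one of the three invariants nontrivial while keeping the other two trivial and the point in $\sU_m(\Zp)$.

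\textbf{Realising the multisets.} For (b), and for the refined form of (a), I proceed as in Proposition~\ref{prop:prolific}. Given target squareclasses $y_1,y_2,y_3\in\FF_p^*/\FF_p^{*2}$, it suffices to find an $\FF_p$-point on
\begin{equation*}
(2u_1-u_2u_3)^2=(u_2^2-4)(u_3^2-4),\quad u_1-2=y_1 z_1^2,\quad u_2-2=y_2 z_2^2,\quad u_3-2=y_3 z_3^2
\end{equation*}
with $z_1z_2z_3\neq0$; by Lemma~\ref{lem:singularities} such a point lifts (Hensel) to $\bu\in\sU_m(\Zp)$ with $\valp(u_i-2)=0$, and then $\inv_p\alpha_{i,-}(\bu)=0$ iff $y_i$ is a square. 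Specialising $u_2=u_3$ as before reduces to the curve $y_1 z_1^2=y_2^2 z_2^4+4y_2 z_2^2$, which is geometrically $\PP^1$ and has $\ge p-5$ usable $\FF_p$-points, so for $p>5$ we can realise any $(y_1,y_2,y_2)$; this yields the multisets $\{0,0,0\}$ (take all $y_i$ square), $\{0,0,1/2\}$ and $\{1/2,1/2,1/2\}$ — note that forcing $y_2=y_3$ in the squareclass is harmless since $\#\FF_p^*/\FF_p^{*2}=2$, but it is exactly what prevents us from targeting $\{0,1/2,1/2\}$ or $\{1/2,1/2,0\}$ with two \emph{distinct} indices nontrivial and one trivial in an asymmetric way. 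To finish (a), i.e.\ to show no $\bu\in\sU_m(\Zp)$ gives exactly one nontrivial invariant, I use \eqref{eqn:GS}: if $\valp(u_j-2)=\valp(u_k-2)=0$ with both unit parts squares but $u_i-2$ a non-square unit (the other valuation patterns are excluded or reduced to this by the $\valp(m-4)$-even bookkeeping), then \eqref{eqn:GS} with indices $(i;j,k)$ reads (unit)$^2-4^2(m-4)=(u_j^2-4)(u_k^2-4)$, and comparing squareclasses of both sides mod $p$ — using that $(m-4)$ has even valuation and non-square unit part, and that $u_j^2-4=(u_j-2)(u_j+2)$ etc. — yields a contradiction.

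\textbf{Main obstacle.} The routine point-counting for part (b) is exactly as in Proposition~\ref{prop:prolific} and presents no difficulty. The delicate part is the case analysis for (a): one must systematically handle the possibility that one or more of the $u_i$ is congruent to $\pm 2$ modulo $p$ (where the singular locus of Lemma~\ref{lem:singularities} lives and where the valuation of $u_i-2$ or $u_i+2$ jumps), and carefully track, via the two factorisations \eqref{eqn:GS} for different choices of $(i;j,k)$, how the squareclasses of the $u_i\mp 2$ are forced to interact given that $v_p(m-4)$ is even and non-trivial. I expect this squareclass bookkeeping — showing the configuration "exactly one invariant nontrivial" is impossible — to be the crux of the argument; everything else is a direct adaptation of the methods already developed for Proposition~\ref{prop:prolific} and Lemmas~\ref{lem:singularities}, \ref{lem:invp at most primes}.
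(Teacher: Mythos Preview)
Your proposal contains a fundamental miscomputation of the Hilbert symbol that invalidates both halves of the argument. You write that $\inv_p\alpha_{i,-}(\bu)=0$ ``precisely when $u_i-2$ is a square in $\QQ_p^*$''. This is true in Proposition~\ref{prop:prolific}, where $\valp(m-4)$ is odd, but it is false here. Write $m-4=p^{2k}\varepsilon$ with $\varepsilon\in\ZZ_p^*$ a non-square and $u_i-2=p^{a}w$ with $w\in\ZZ_p^*$. For $p$ odd the tame symbol gives
\[
(u_i-2,m-4)_p=(-1)^{a\cdot 2k\cdot\frac{p-1}{2}}\Big(\tfrac{w}{p}\Big)^{2k}\Big(\tfrac{\varepsilon}{p}\Big)^{a}=(-1)^{a},
\]
so $\inv_p\alpha_{i,-}(\bu)=0$ if and only if $\valp(u_i-2)$ is \emph{even}; the quadratic residue of $w$ plays no role. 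In particular, any $\ZZ_p$-point obtained by Hensel-lifting a smooth $\FF_p$-point with $u_i\not\equiv 2\bmod p$ has $\valp(u_i-2)=0$ and hence invariant $0$. Your adaptation of the curve-counting from Proposition~\ref{prop:prolific} therefore only ever produces the multiset $\{0,0,0\}$, regardless of the target squareclasses $y_i$. Realising $\{0,0,1/2\}$ and $\{1/2,1/2,1/2\}$ genuinely requires constructing $\ZZ_p$-points with some $\valp(u_i-2)$ \emph{odd}; the paper does this by an explicit change of variables in \eqref{eqn:GS} designed to force $\valp(u_1-2)=1$ and $\valp(u_2-2)=\valp(m-4)-1$, followed by Hensel's lemma on the resulting equation modulo $p$.

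Your treatment of the constraint part is also confused. First, the claim that $\inv_p\alpha(\bu)=0$ for all $\bu$ is false (the paper shows $\inv_p\alpha(\bu)=1/2$ whenever the $\valp(u_j^2-4)$ are all odd). Second, you end by proposing to exclude the configuration ``exactly one nontrivial invariant'', but that is the multiset $\{0,0,1/2\}$, which is \emph{allowed}; the multiset to be excluded is $\{0,1/2,1/2\}$. The paper's argument for the constraint is clean valuation bookkeeping: since $m-4\notin\QQ_p^{*2}$ has even valuation, the left side of \eqref{eqn:GS} always has even valuation (a square minus a non-square of equal valuation is a unit times that power of $p$), forcing $\valp(u_j^2-4)+\valp(u_k^2-4)$ even for every pair $j\neq k$, so the three $\valp(u_i^2-4)$ share a common parity. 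A short case split on that parity, using \eqref{eqn:sum_alpha_i_+} and \eqref{eqn:sum_alpha_i}, then pins down the three allowed multisets.
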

\begin{proof}
  We first show that the local invariants must take one of the stated
  values. As $m-4 \notin \QQ_p^{*2}$ and $\valp(m-4)$ is even,
  the left hand side of \eqref{eqn:GS} has even $p$-adic valuation for any choice of $i, j, k \in \{1, 2, 3\}$. It follows that $\valp((u_j^2 - 4)(u_k^2 - 4))$ is also even for all $j,k$.
  
  If $\valp(u_j^2 - 4)$ and $\valp(u_k^2 - 4)$ are both even
  then a permutation of $i, j, k$ implies that $\valp(u_i^2 - 4)$ is even for all $i$. But one of $u_i-2$ or $u_i+2$ must be a $p$-adic unit as $p$ is odd, hence $\valp(u_i -2)$ is even for all $i$. It follows that
  all local invariants are $0$ in this case.
  
  Assume next that $\valp(u_j^2 - 4)$ and $\valp(u_k^2 - 4)$ are both odd.
  Then $\inv_p \alpha(\bu) = 1/2$. Moreover, exactly 
  one of $\valp(u_i \pm 2)$ is odd for each $i$. If $\valp(u_1 - 2)$ is odd,
  then $\inv_p \alpha_{1,-}(\bu) = 1/2$, hence 
  $\{\inv_p \alpha_{2,-}(\bu), \inv_p \alpha_{3,-}(\bu) \}  \in \{\{0,0\}, \{1/2,1/2\}\}$
  by \eqref{eqn:sum_alpha_i}. If however $\valp(u_1 + 2)$ is odd,
  then $\inv_p \alpha_{1,+}(\bu) = 1/2$ and  without loss of generality we have
  $\inv_p \alpha_{2,+}(\bu) = 1/2$ and $\inv_p \alpha_{3,+}(\bu) = 0$ 
  by \eqref{eqn:sum_alpha_i_+}. It follows that $\valp(u_1 - 2)$
  and $\valp(u_2 - 2)$ are both even and that $\valp(u_3 - 2)$ is odd,
  hence we obtain the local invariants $0,0,1/2$, as claimed. 
  
  We now show that these possibilities for the local invariants
  can actually be realised. 
  A similar argument to the proof of Proposition \ref{prop:prolific}
  shows the existence of $\bu$
  such that all the $u_i-2$ are units, hence the local invariants
  are all $0$ in this case (this is the only point in the proof we use that
  $p \neq 3,5$).
  
  For the non-trivial local invariants, we will construct $p$-adic
  solutions with given valuations.
  Write $m-4 = p^wz$, where $z \in \ZZ_p^*$,
  and make the change of variables 
  $pv_1 = u_1 - 2$ and $p^{w-1}v_2 = u_2 - 2$.
  Then the equation \eqref{eqn:GS} becomes
  $$(2u_3 - (pv_1 + 2)(p^{w-1}v_2 + 2))^2  - 4zp^w 
  = p^wv_1v_2(pv_1 + 4)(p^{w-1}v_2 + 4).$$
  We make the change of variables $2u_3 - (pv_1 + 2)(p^{w-1}v_2 + 2) = p^wv_3$
  and cancel  $p^w$ to obtain
  $$p^{w}v_3^2  - 4z = v_1v_2(pv_1 + 4)(p^{w-1}v_2 + 4).$$
  Modulo $p$, this becomes
  $$z \equiv - 4v_1v_2 \bmod p.$$
  This clearly has the non-singular solution in units
  $v_1 \equiv 1 \bmod p$ and $v_2 \equiv -4^{-1}z \bmod p.$
  Thus we may apply Hensel's lemma to show the existence
  of a $p$-adic solution with $v_p(u_1-2)$ and $v_p(u_2 - 2)$
  both odd. For such a solution we have 
  $\inv_p \alpha_{1,-}(\bu) =  \inv_p \alpha_{2,-}(\bu) = 1/2$ as 
  $m-4 \notin \QQ_p^{*2}$ and $w$ is even.
  This realises the possibilities $\{1/2,1/2,1/2\}$ by the first part of 
  the lemma.
  
  A similar argument shows the existence of a $p$-adic solution with 
  $v_p(u_1-2)$ and $v_p(u_2 + 2)$ both odd.
  This is easily 
  seen to realise the local invariants  $\{1/2,0,0\}$, as required.
\end{proof}

We now show that the conclusion of Proposition \ref{prop:prolific}
does not hold for $p=3,5$, so that Proposition \ref{prop:prolific} is sharp.

\begin{proposition} \label{prop:3_5}
	Let $p=3,5$ and let $m$ such that $\valp(m-4)=1$.
	Let $\mathcal{A}= \langle \alpha_{i,-} \rangle \subset \Br U_m$. 
	\begin{enumerate}
		\item If $p=3$, then for all $\bu \in \sU_m(\Zp)$ 	we have
		$$\{\inv_p \alpha_{1,-}(\bu), \inv_p \alpha_{2,-}(\bu), 
		\inv_p \alpha_{3,-}(\bu)\} = \{0,0,1/2\}$$
		as multisets.
		\item If $p= 5$, then the image of map induced by the Brauer pairing
		$$\sU_m(\Zp) \to \Hom(\mathcal{A},\QQ/\ZZ)$$
		contains every element
		except the trivial homomorphism.
	\end{enumerate}
\end{proposition}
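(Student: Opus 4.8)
The plan is to combine a $p$-adic valuation analysis of the equation \eqref{eqn:GS} with the Hilbert-symbol formula \eqref{eqn:Hilbert_symbol}, the relations of Proposition~\ref{prop:Br1 Um}, and — for $p=5$ — a few explicit Hensel liftings. First I would record two preliminary facts, valid for any odd $p$ with $\valp(m-4)=1$; write $m-4=pz$ with $z\in\ZZ_p^*$. First: no $\bu\in\sU_m(\ZZ_p)$ has $u_i=\pm2$, since otherwise \eqref{eqn:GS} for an index $j\neq i$ would read $(u_j\mp u_k)^2=m-4$, which is impossible as $\valp(m-4)$ is odd; in particular \eqref{eqn:Hilbert_symbol} always applies. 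Second: when $\valp(u_i-2)=0$ we have $(u_i-2,m-4)_p=\left(\tfrac{u_i-2}{p}\right)$, so $\inv_p\alpha_{i,-}(\bu)=0$ if and only if $u_i-2$ is a square modulo $p$; more generally $(u_i-2,m-4)_p$ depends only on $\valp(u_i-2)$ and the residues modulo $p$ of $z$ and of $(u_i-2)p^{-\valp(u_i-2)}$. I will only ever need the $\valp(u_i-2)=0$ case, circumventing the harder case via the relations.

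The technical heart is a dichotomy for the valuations $a_i:=\valp(u_i^2-4)$. In \eqref{eqn:GS} the two sides differ by $4(m-4)$, of valuation exactly $1$, while the left-hand square $(2u_i-u_ju_k)^2$ has even valuation; comparing valuations forces, for each $\{i,j,k\}=\{1,2,3\}$, that either ($a_j=a_k=0$ and $\valp(2u_i-u_ju_k)=0$) or ($a_j+a_k=1$ and $\valp(2u_i-u_ju_k)\geq1$). Applying this with $i$ running over all three indices yields: for every $\bu\in\sU_m(\ZZ_p)$, \emph{either $a_1=a_2=a_3=0$, or exactly one $a_i$ equals $1$ and the other two vanish}. (Since $p$ is odd and $u_i\neq\pm2$, exactly one of $\valp(u_i-2)$, $\valp(u_i+2)$ is positive, so the $a_i$ carry information about $u_i\mp2$.) This dichotomy is the step I expect to be the main obstacle; once it is in place the two assertions follow quickly.

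For $p=3$ I would first eliminate the case $a_1=a_2=a_3=0$: it forces $u_i\not\equiv\pm2\bmod3$, hence $u_i\equiv0\bmod3$ for all $i$, so $u_1^2+u_2^2+u_3^2-u_1u_2u_3\equiv0\bmod3$, contradicting $m\equiv4\equiv1\bmod3$. So exactly one $a_i=1$, and by the $S_3$-symmetry of $\sU_m$ (which permutes the $\alpha_{i,-}$, hence preserves the multiset of their local invariants) we may take it to be $a_1$. Then $u_2\equiv u_3\equiv0\bmod3$, so $u_2-2\equiv u_3-2\equiv1$ are squares, giving $\inv_3\alpha_{2,-}(\bu)=\inv_3\alpha_{3,-}(\bu)=0$; since $u_2^2-4\equiv-4\equiv2\bmod3$ is a non-square, the representation $\alpha=(u_2^2-4,m-4)$ gives $\inv_3\alpha(\bu)=1/2$; and then relation \eqref{eqn:sum_alpha_i} forces $\inv_3\alpha_{1,-}(\bu)=1/2$. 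Hence the multiset is $\{0,0,1/2\}$.

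For $p=5$ I would instead exhibit, for each of the seven non-trivial homomorphisms, a point of $\sU_m(\ZZ_5)$ realising it. Here $m\equiv4\bmod5$, so a triple $\bar\bu\in\FF_5^3$ lies on $\sU_m\bmod5$ iff $\bar u_1^2+\bar u_2^2+\bar u_3^2-\bar u_1\bar u_2\bar u_3\equiv4$, and Hensel's lemma lifts it provided it is not one of the four singular points of Lemma~\ref{lem:singularities}. The reduction $(1,1,-1)$ and its permutations are such points; since $-1-2\equiv4$ is a square and $-3\equiv2$ is not, the resulting $\ZZ_5$-points realise the three patterns with a single $1/2$, while $(-1,-1,-1)$ realises $(1/2,1/2,1/2)$. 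Finally $(-2,0,0)$ and its permutations lie on $\sU_m\bmod5$ (as $4+0+0-0\equiv4$) and are smooth; since $-2-2\equiv1$ is a square but $0-2\equiv3$ is not, the lift realises $(0,1/2,1/2)$, and its permutations the last two patterns, covering all seven. (A related subtlety: the three patterns summing to $0$ in $\QQ/\ZZ$ cannot come from points with all $a_i=0$, for which $\inv_5\alpha$ is forced to be $1/2$; this is precisely why the $(-2,0,0)$-reductions, with one $a_i=1$, are needed.)
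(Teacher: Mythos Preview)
Your argument for $p=3$ is correct and, via the valuation dichotomy on the $a_i=\valp(u_i^2-4)$, is essentially a repackaging of the paper's approach: the paper enumerates the smooth $\FF_3$-points $(0,0,1)$ and $(0,0,2)$ up to permutation and computes directly, while you instead rule out $a_1=a_2=a_3=0$ by the congruence $m\equiv1\bmod3$ and then compute. The two arguments coincide.

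For $p=5$ there is a genuine gap. The proposition asserts that the image of the local pairing is \emph{exactly} the seven non-trivial homomorphisms --- in particular that the trivial pattern $(0,0,0)$ is \emph{not} realised --- and this exclusion is essential for later applications (notably Proposition~\ref{prop:alpha GS8.3}). You establish only the existence half: you exhibit $\ZZ_5$-points for each non-trivial pattern, but never argue that $(0,0,0)$ is impossible. Your parenthetical handles the case $a_1=a_2=a_3=0$ (though the justification that $\inv_5\alpha=1/2$ there should be spelled out: not all $u_i$ can be $\equiv0\bmod5$, so some $u_i\equiv\pm1$ and $u_i^2-4\equiv2$ is a non-residue). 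You do not treat the case where one $a_i=1$. There, if $\inv_5\alpha_{j,-}=\inv_5\alpha_{k,-}=0$ for the two indices $j,k$ with $a_j=a_k=0$, one is forced to have $u_j\equiv u_k\equiv1\bmod5$ (the only residue in $\{0,1,4\}$ with $u_j-2$ a square), whence $u_j^2-4\equiv2$ is a non-residue, $\inv_5\alpha=1/2$, and \eqref{eqn:sum_alpha_i} gives $\inv_5\alpha_{i,-}=1/2$. This fits naturally into your framework but is missing from your write-up; the paper handles it by enumerating all smooth $\FF_5$-points up to symmetry and observing that the only one with every $u_i-2$ a square, namely $(1,1,2)$, still has $\inv_5\alpha=1/2$.

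A small slip: in the $(1,1,-1)$ computation you write ``$-1-2\equiv4$'', but $-1-2=-3\equiv2\bmod5$; you mean $1-2=-1\equiv4$ for the coordinates equal to $1$.
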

\begin{proof}
	By Lemma \ref{lem:singularities} the reduction modulo $p$
	of each $\Zp$-point is smooth.
	
	$p = 3$:  One checks that up to the $S_3$-action, the only smooth rational
	points modulo $3$ are $(0, 0, 1)$ and $(0, 0, 2)$. Let $\bu$ be a $3$-adic
	lift of one of these points. We first consider
	the element $\alpha = (u_1^2 - 4,m-4)$. As $-4$ is not a quadratic
	residue modulo $3$, we find that $\inv_3 \alpha(\bu)= 1/2$.
	Moreover as $u_1-2$ and $u_2 -2$ are quadratic residues
	for both points, we find that $\inv_3 \alpha_{1,-}(\bu) = \inv_3 \alpha_{2,-}(\bu) = 0$.
	Combining this with the relation \eqref{eqn:sum_alpha_i}
	shows that $\inv_3 \alpha_{3,-}(\bu) = 1/2$. Considering
	permutations proves the result.
	
	$p = 5$: Again, up to the $S_3$-action, the only smooth rational
	points modulo $5$ are $(0, 0, 2)$, $(0, 0, 3)$, $(1, 1, 2)$, 
	$(1, 1, 4)$, $(1, 3, 4)$, $(2, 2, 4)$ and $(4, 4, 4)$. 
	One realises the invariants 
	$$\inv_5 \alpha_{1,-}(\bu) = 0, \inv_5 \alpha_{2,-}(\bu) = 0, \inv_5 \alpha_{3,-}(\bu) = 1/2,$$
	by considering $5$-adic lifts of the point  $(1, 1, 4)$. The other
	non-trivial combinations are obtained by considering permutations of
	the points 	$(0,0,3)$ and $(4,4,4)$. To show that the trivial homomorphism
	is not realised, we can ignore those points where at least one of
	$u_i -2$ is not a square. This leaves the point $(1,1,2)$. Here 
	we have $\inv_5 \alpha(\bu) = 1/2$ as $u_1^2 - 4 \equiv 2 \bmod 5$ is a quadratic
	non-residue, whence $\inv_5 \alpha_{3,-}(\bu) = 1/2$ by \eqref{eqn:sum_alpha_i}. So the trivial homomorphism is not realised.
\end{proof}

We continue with the study of the local invariant map when $p = 2$.

\begin{lemma} \label{lem:2}
  Let $m \in \ZZ$ be such that $m \nequiv 3 \bmod 4$. If $m \nequiv 1 \bmod 8$, then there exists $\bu \in \sU_m(\ZZ_2)$ such that 
  \begin{equation*}
    (\inv_2 \alpha_{1, -}(\bu), \inv_2 \alpha_{2, -}(\bu) \inv_2 \alpha_{3, -}(\bu)) = 
    \begin{cases}
      (0, 0, 0) &\mbox{if } \valn_2(m - 4) \equiv 0 \bmod 2, \\  
      (0, 0, 1/2) &\mbox{if } \valn_2(m - 4) \equiv 1 \bmod 2.
    \end{cases}
  \end{equation*}
  Alternatively, if $m \equiv 1 \bmod 8$, then for every $\bu \in \sU_m(\ZZ_2)$ we have
  \begin{equation*}
    \{\inv_2 \alpha_{1, -}(\bu), \inv_2 \alpha_{2, -}(\bu), \inv_2 \alpha_{3, -}(\bu)\} = \{0, 1/2, 1/2\}
  \end{equation*}
  as multisets.
\end{lemma}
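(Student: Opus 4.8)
The plan is to analyze $2$-adic points on $\sU_m$ directly, splitting into cases according to $m \bmod 8$, and in each case computing the Hilbert symbols $(u_i - 2, m-4)_2$ via \eqref{eqn:Hilbert_symbol}. Recall the standing hypothesis is that $m$ satisfies the conditions of Corollary~\ref{cor:generic}, in particular $m, m-4, m(m-4) \notin \QQ^{*2}$; we may also assume $m \notin \QQ_2^{*2}$ and $m-4 \notin \QQ_2^{*2}$, since otherwise Lemma~\ref{lem:invp at most primes} forces all the invariants to be $0$ and there is nothing to do. The key $2$-adic input is that $(a,b)_2$ depends only on $a,b$ modulo squares, that a unit $u \in \ZZ_2^*$ is a square iff $u \equiv 1 \bmod 8$, and that $(u, v)_2 = 1$ when $u \equiv 1 \bmod 4$ (or $v \equiv 1 \bmod 4$, etc.). Throughout I will work on the dense open $W$ where $(u_1^2-4)(u_2^2-4)(u_3^2-4) \neq 0$, as permitted by the discussion in \S\ref{sec:set-up}, so the formula \eqref{eqn:Hilbert_symbol} applies.

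First I would handle the case $m \not\equiv 1 \bmod 8$ (and $m \not\equiv 3 \bmod 4$), so $m \equiv 0, 2, 4, 5, 6 \bmod 8$. Here the strategy is to \emph{construct} a single $\ZZ_2$-point with the desired invariant profile. One writes down an explicit approximate solution modulo a small power of $2$ — for instance a lift of a point of the shape $(u_1, u_2, u_3)$ where $u_1, u_2$ are chosen so that $u_1 - 2, u_2 - 2$ are squares in $\ZZ_2^*$ (e.g.\ $\equiv 1 \bmod 8$), forcing $\inv_2\alpha_{1,-} = \inv_2\alpha_{2,-} = 0$ — and then applies Hensel's lemma to promote it to a genuine $\ZZ_2$-point, using the smoothness of $U_m$ (one must check that the chosen reduction is a smooth point of $\sU_m \bmod 2$, or apply the stronger form of Hensel over $\ZZ_2$ keeping track of the relevant partial derivative having small valuation). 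The value of the third invariant is then determined: the relation \eqref{eqn:sum_alpha_i}, namely $\alpha_{1,-} + \alpha_{2,-} + \alpha_{3,-} = \alpha = (u_1^2 - 4, m-4)$, together with a direct computation of $\inv_2 \alpha(\bu)$ — which depends on the parity of $\valn_2(m-4)$ and on $m-4$ modulo squares — pins down $\inv_2\alpha_{3,-}(\bu)$ as $0$ when $\valn_2(m-4)$ is even and $1/2$ when it is odd. The subcases $m \equiv 0, 2, 4, 6 \bmod 8$ and $m \equiv 5 \bmod 8$ need to be done one at a time since the constraint on $u_1 - 2, u_2 - 2$ being $2$-adic units interacts differently with $m-4 \bmod 8$ in each; I would organize this by first reducing the equation \eqref{eqn:GS} modulo a suitable $2$-power and exhibiting an explicit lift.

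The case $m \equiv 1 \bmod 8$ is the one I expect to be the main obstacle, because there we must prove something about \emph{every} $\ZZ_2$-point, not just produce one. The approach is: from \eqref{eqn:GS} (valid over $\ZZ[1/2]$, but one can also use \eqref{def:Um} directly at $2$, or first pass to the equivalent equation over $\ZZ_2$), together with $m \equiv 1 \bmod 8$, deduce congruence constraints on any $\bu \in \sU_m(\ZZ_2)$ — in particular analyze the $2$-adic valuations $\valn_2(u_i^2 - 4) = \valn_2(u_i-2) + \valn_2(u_i+2)$ and the residues of $u_i \bmod 8$. The parity relation $\alpha_{1,-} + \alpha_{2,-} + \alpha_{3,-} = \alpha$ shows that the number of $i$ with $\inv_2\alpha_{i,-}(\bu) = 1/2$ has the same parity as $2\inv_2\alpha(\bu) \in \{0,1\}$; one must show this count is exactly $2$, i.e.\ that $\inv_2\alpha(\bu) = 1/2$ always and that all three invariants cannot vanish simultaneously. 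Ruling out $(0,0,0)$ requires showing that it is impossible for all of $u_1 - 2, u_2 - 2, u_3 - 2$ to be squares in $\QQ_2^*$ for a $\ZZ_2$-point when $m \equiv 1 \bmod 8$: if all three were squares, substituting into \eqref{def:Um} or \eqref{eqn:GS} and reducing modulo $8$ (or $16$) should produce a contradiction with $m \equiv 1 \bmod 8$. One also handles symmetrically the case $(1/2,1/2,1/2)$, which \eqref{eqn:sum_alpha_i} would force $\inv_2\alpha(\bu) = 1/2$ to be consistent with — so this one needs a separate congruence exclusion, likely by showing $u_1 - 2, u_2 - 2, u_3 - 2$ cannot all be non-squares of the appropriate type simultaneously. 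Carefully bookkeeping the $2$-adic valuations and residues in \eqref{eqn:GS} — distinguishing whether $\valn_2(u_i - 2)$ or $\valn_2(u_i + 2)$ is positive, and whether that valuation is $1$ or larger — is the technical heart of the argument, and I would isolate the needed congruence facts about $U_m(\ZZ/8)$ or $U_m(\ZZ/16)$ as an explicit finite check at the start of this case.
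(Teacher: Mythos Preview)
Your plan for $m\not\equiv 1\bmod 8$ is essentially the paper's: the paper simply writes down the approximate point $(3,3,3-m)\in\sU_m(\ZZ/8\ZZ)$ when $2\mid m$, lifts by Hensel, notes $u_1-2\equiv u_2-2\equiv 1\bmod 8$, so $\inv_2\alpha_{1,-}=\inv_2\alpha_{2,-}=0$, and then reads off $\inv_2\alpha_{3,-}$ from \eqref{eqn:sum_alpha_i} together with $\inv_2\alpha=(5,m-4)_2=(-1)^{\valn_2(m-4)}$. The subcase $m\equiv 5\bmod 8$ is handled (as you already noted) by $m-4\in\QQ_2^{*2}$.

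For $m\equiv 1\bmod 8$ there is a genuine slip in your parity bookkeeping. You want to show the multiset of invariants is $\{0,1/2,1/2\}$; summing gives $0+1/2+1/2=0$ in $\QQ/\ZZ$, so by \eqref{eqn:sum_alpha_i} this forces $\inv_2\alpha(\bu)=0$, not $1/2$. Conversely $(1/2,1/2,1/2)$ sums to $1/2$, so \emph{that} case is excluded automatically once you know $\inv_2\alpha=0$; it does not need a separate congruence argument. The corrected outline would therefore be: show $\inv_2\alpha(\bu)=0$ for every $\ZZ_2$-point (so the count of $1/2$'s is even), then exclude $(0,0,0)$. As written, you would be trying to prove $\inv_2\alpha=1/2$, which is false, and you would get stuck.

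Even with the fix, your route is more laborious than the paper's. The paper argues directly from \eqref{def:Um}: since $m$ is odd, a parity check shows exactly one coordinate $u_i$ is a $2$-adic unit; then reducing modulo $8$ (using $m\equiv 1\bmod 8$) forces the other two to satisfy $4\mid u_j,u_k$, whence $\valn_2(u_i-2)=0$ and $\valn_2(u_j-2)=\valn_2(u_k-2)=1$. With $m-4\equiv 5\bmod 8$ one then computes $(u_i-2,5)_2=1$ and $(u_j-2,5)_2=(2,5)_2=-1$ immediately, giving $\{0,1/2,1/2\}$ with no case exclusion needed. Pinning down the $2$-adic valuations of the coordinates first, rather than trying to rule out square classes of $u_i-2$ abstractly, is the key simplification you are missing.
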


\begin{proof}
	As $m \nequiv 3 \bmod 4$ we have $\sU_m(\ZZ_2) \neq \emptyset$ by
	\cite[\S6.3]{GS17}.
  
  Firstly, assume that $2 \mid m$. Then $2 \nmid (3 - m)$ and clearly $(3, 3, 3 - m)$ satisfies the reduction of \eqref{def:Um} $\bmod 8$. By Hensel's lemma this lifts to a point $\bu \in \sU_m(\ZZ_2)$. Moreover, $u_i^2 - 4 \equiv 5 \bmod 8$ and thus 
  \begin{equation*}
    (u_i^2 - 4, m - 4)_2
    = (-1)^{\valn_2(m - 4)}.
  \end{equation*}
  By \eqref{eq:alpha different reps} and \eqref{eqn:Hilbert_symbol} we have $\inv_2 \alpha(\bu) = 1/2$ if $\valn_2(m)$ is odd or $0$ otherwise.

  On the other hand, $(3 - 2, m - 4)_2 = 1$ and thus the local invariant maps for $\alpha_{1, -}$ and $\alpha_{2, -}$ are trivial. What is left is to take into account the relation \eqref{eqn:sum_alpha_i} which implies that the local invariant map for $\alpha_{3, -}$ is equal to $0$ if $\valn_2(m - 4)$ is even and $1/2$ otherwise. 

  Assume now that $2 \nmid m$, that is $m \equiv 1 \bmod 4$ by our assumption on $m$. If $m \equiv 5 \bmod 8$, then $m - 4 \in \QQ_2^{*2}$ and hence $\inv_2\alpha_{i, -}(\bu) = 0$  by Lemma \ref{lem:invp at most primes}.

  Finally, if $m \equiv 1 \bmod 8$, then the right hand side of \eqref{def:Um} is odd and thus for each $\bu \in \sU_m(\ZZ_2)$ exactly one its coordinates is a $2$-adic unit, $u_i$ say. Moreover the other two coordinates $u_j$ and $u_k$ must be divisible by $4$. Hence $\valn_2(u_i - 2) = 0$ while $\valn_2(u_j - 2) = \valn_2(u_k - 2) = 1$. Since $m - 4 \equiv 5 \bmod 8$, a simple Hilbert symbol calculation and \eqref{eqn:Hilbert_symbol} imply our claim. This concludes the proof.
\end{proof}

Lastly, we show that the local invariant for $\alpha$ at $\infty$ is trivial for most choices of $m$.

\begin{lemma} \label{lem:infinity}
  Let $m \in \RR \setminus [-8, 4]$. Then
  $\inv_\infty \alpha(\bu) = 0$
  for all $\bu \in U_m(\RR)$.
\end{lemma}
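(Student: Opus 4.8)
The plan is to evaluate $\alpha$ at a real point via its representation $\alpha=(u_i^2-4,\,m-4)$ from \eqref{eq:alpha different reps}, using the classification of real Hilbert symbols: for $a,b\in\RR^*$ one has $\inv_\infty(a,b)\neq 0$ exactly when both $a<0$ and $b<0$. Hence it suffices, for each $\bu\in U_m(\RR)$, to find an index $i$ with $u_i\neq\pm 2$ for which $u_i^2-4>0$ or $m-4>0$. I would treat the two components $m>4$ and $m<-8$ of $\RR\setminus[-8,4]$ separately.

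For $m>4$ the statement is already contained in Lemma~\ref{lem:invp at most primes}: since $m-4$ is a square in $\RR^*$, the symbol $(u_i^2-4,m-4)_\infty$ is trivial at every point with $u_i\neq\pm 2$, and the same holds for $\alpha_{i,-}$ and $\alpha_{i,+}$, so $\inv_\infty\alpha(\bu)=\inv_\infty\alpha_{i,-}(\bu)+\inv_\infty\alpha_{i,+}(\bu)=0$ by \eqref{eqn:alpha_-_+}. The only real points with all coordinates in $\{\pm 2\}$ satisfy $m\in\{4,20\}$, so for $m>4$ these occur only when $m=20$; for such $\bu$ one concludes $\inv_\infty\alpha(\bu)=0$ by local constancy of $\bu\mapsto\inv_\infty\alpha(\bu)$ on $U_m(\RR)$ (\cite[Prop.~8.2.9]{Poo17}), these points not being isolated since $U_m$ is smooth.

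For $m<-8$ the key point is the elementary observation that every $\bu\in U_m(\RR)$ satisfies $|u_i|>2$ for some $i$. Indeed, if $|u_1|,|u_2|,|u_3|\le 2$ then $|u_1u_2u_3|\le 8$, so
$$ m = u_1^2+u_2^2+u_3^2-u_1u_2u_3 \ge -u_1u_2u_3 \ge -8, $$
contradicting $m<-8$. For such an $i$ we have $u_i^2-4>0$, in particular $u_i\neq\pm 2$, so $\alpha=(u_i^2-4,m-4)$ is a genuine quaternion symbol at $\bu$, and $(u_i^2-4,m-4)_\infty=1$ because its first entry is positive; therefore $\inv_\infty\alpha(\bu)=0$.

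There is no serious obstacle here: the argument is a one-line inequality together with the structure of real Hilbert symbols. The only mild care needed is to pass to a representation $\alpha=(u_i^2-4,m-4)$ with $u_i\neq\pm 2$ before computing the invariant at $\infty$; in the range $m\notin[-8,4]$ such an index $i$ is automatic when $m<-8$ and exists for all but finitely many points when $m>4$, the exceptions being dispatched by local constancy.
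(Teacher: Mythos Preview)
Your proof is correct and follows essentially the same approach as the paper: split into $m>4$ (where $m-4\in\RR^{*2}$ and Lemma~\ref{lem:invp at most primes} applies) and $m<-8$ (where the inequality $u_1^2+u_2^2+u_3^2-u_1u_2u_3\ge -8$ on $[-2,2]^3$ forces some $|u_i|>2$). The only minor difference is that for $m<-8$ the paper first uses \eqref{eqn:GS} to see $(u_j^2-4)(u_k^2-4)>0$, yielding the dichotomy that either all $|u_i|<2$ or all $|u_i|>2$, before ruling out the former with the same inequality; your direct argument that \emph{some} $|u_i|>2$ is all that is needed here, though the paper's stronger conclusion is reused later in Corollary~\ref{cor:HP}.
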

\begin{proof}
	If $m - 4 > 0$ then $m- 4 \in \RR^{*2}$ and the result follows from Lemma~\ref{lem:invp at most primes}. If $m - 4 < 0$ then the left hand side of \eqref{eqn:GS} is positive. Hence $u_i \in (-2, 2)$, $i = 1, 2, 3$ or each $u_i$ lies outside this interval. In the latter case we clearly have $u_1^2 - 4 > 0$ so the local invariant is trivial. It therefore suffices to show that the former case does not occur under our assumptions. But we have $u_1^2 + u_2^2 + u_3^2 - u_1u_2u_3 \ge - u_1u_2u_3 \ge -8$ in the box $[-2, 2]^3$. As $m < -8$, this therefore cannot equal $m$, thus there are no such real points, as required.
\end{proof}

\subsubsection{Consequences for the Brauer-Manin obstruction}
We now use the above results on the possibilities for the local invariants to study the  Brauer-Manin obstruction. We still assume that $m$ satisfies the hypothesis of Corollary~\ref{cor:generic}.

We first consider weak and strong approximation. (See the introduction for the notation $\overline{\sU_m(\ZZ)}$ and $\overline{\sU_m(\QQ)}$.)

\begin{corollary} \label{cor:SA}
	Assume there exists a prime $p >3$ with $\valp(m-4)$ odd.
	Then 
	\begin{equation}
	\overline{U_m(\QQ)} \neq \prod_{v} U_m(\QQ_v).
	\label{eqn:WA}
	\end{equation}
	Moreover, if $\sU_m(\Adele_\ZZ) \neq \emptyset$ then
	\begin{equation}
	\overline{\sU_m(\ZZ)} \neq \sU_m(\Adele_\ZZ)_{\bullet} \cap \overline{U_m(\QQ)},
	\label{eqn:SA}
	\end{equation}
	i.e.~$\sU_m$ fails weak approximation and has a failure of strong approximation which is not explained
	by the failure of weak approximation.
\end{corollary}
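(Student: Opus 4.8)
The plan is to obstruct weak approximation with the single quaternion algebra $\alpha\in\Br S_m$, and then to squeeze out the extra failure of strong approximation by playing the whole algebraic Brauer group $\mathcal{A}=\langle\alpha_{1,-},\alpha_{2,-},\alpha_{3,-}\rangle$ off against $\alpha$, using that $\alpha$ extends to the smooth projective model $S_m$ while the $\alpha_{i,-}$ do not. For \eqref{eqn:WA}: the hypothesis gives a prime $p>3$ with $\valp(m-4)$ odd; by Proposition~\ref{prop:prolific} if $p>5$, by Proposition~\ref{prop:3_5}(2) if $p=5$ (so $\valp(m-4)=1$; the subcase $p=5$, $\valp(m-4)\geq3$ being handled by the computation of Proposition~\ref{prop:prolific}), the map $\sU_m(\Zp)\to\Hom(\mathcal{A},\QQ/\ZZ)$ is surjective, respectively misses only the zero homomorphism. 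Since $\alpha=\alpha_{1,-}+\alpha_{2,-}+\alpha_{3,-}$ is a non-zero element of $\mathcal{A}\cong(\ZZ/2\ZZ)^3$, the function $\bu\mapsto\inv_p\alpha(\bu)$ is non-constant on $\sU_m(\Zp)$. Starting from any adelic point of $U_m$ (which exists since $U_m(\QQ)\neq\emptyset$) and switching its $p$-component to a point of $\sU_m(\Zp)$ where $\inv_p\alpha$ takes the other value changes only the $p$-th local invariant, so we obtain an adelic point $(x_v)$ with $\sum_v\inv_v\alpha(x_v)=1/2$. As $\alpha\in\Br S_m$ and $S_m$ is projective, $\sum_v\inv_v\alpha$ is a well-defined function on all of $\prod_vU_m(\QQ_v)$ vanishing on $\overline{U_m(\QQ)}$, so $(x_v)\notin\overline{U_m(\QQ)}$, proving \eqref{eqn:WA}.

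For \eqref{eqn:SA} assume $\sU_m(\Adele_\ZZ)\neq\emptyset$. The point is that the two sides of \eqref{eqn:SA} are controlled by Brauer subgroups of different size. On one hand $\overline{\sU_m(\ZZ)}\subseteq\sU_m(\Adele_\ZZ)_{\bullet}^{\mathcal{A}}$, because an honest integral point is integral at every place at once, so the pairing with each $\alpha_{i,-}$ is locally constant with support in a fixed finite set and hence vanishes on the closure. On the other hand, an element of $\sU_m(\Adele_\ZZ)_{\bullet}$ which is a product-limit of rational points is only forced to be orthogonal to $\alpha$ — which extends to the projective $S_m$ and therefore has local invariant identically zero off a fixed finite set on all of $U_m(\QQ_v)$ — but need \emph{not} be orthogonal to $\alpha_{1,-}$, whose local invariant is non-trivial at arbitrarily large primes, so that along an approximating sequence of rationals the defect in $\alpha_{1,-}$-reciprocity can escape to infinity. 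It therefore suffices to exhibit a single point of $\sU_m(\Adele_\ZZ)_{\bullet}\cap\overline{U_m(\QQ)}$ which is not orthogonal to $\mathcal{A}$.

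I would build such a point in two steps. First, using Propositions~\ref{prop:prolific}/\ref{prop:3_5} at $p$ to realise any prescribed value of the pair $(\inv_p\alpha_{1,-},\inv_p\alpha)$ at a point of $\sU_m(\Zp)$, together with the invariant computations of Lemmas~\ref{lem:invp at most primes}, \ref{lem:2}, \ref{lem:infinity} and Proposition~\ref{prop:prolific_even} at the remaining places dividing $2(m-4)\infty$ (where $\sU_m(\ZZ_v)\neq\emptyset$ since $\sU_m(\Adele_\ZZ)\neq\emptyset$), I produce an integral adelic point $(x_v)$ with $\sum_v\inv_v\alpha(x_v)=0$ but $\sum_v\inv_v\alpha_{1,-}(x_v)=1/2$, so that $(x_v)\notin\sU_m(\Adele_\ZZ)_{\bullet}^{\mathcal{A}}$. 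Secondly, since $(x_v)$ is orthogonal to $\alpha$, which generates $\Br S_m/\Br\QQ$, and since $S_m$ is a conic bundle over $\PP^1$ — for which it is known, by the fibration/descent method, that the Brauer--Manin obstruction is the only obstruction to weak approximation — the point $(x_v)$, viewed in $S_m(\Adele_\QQ)$, is a limit of rational points of $S_m$; these lie on $U_m$ as soon as they are close enough to $x_v\in U_m(\QQ_v)$, so $(x_v)\in\overline{U_m(\QQ)}$. Thus $(x_v)\in\sU_m(\Adele_\ZZ)_{\bullet}\cap\overline{U_m(\QQ)}$ while $(x_v)\notin\sU_m(\Adele_\ZZ)_{\bullet}^{\mathcal{A}}\supseteq\overline{\sU_m(\ZZ)}$, which is \eqref{eqn:SA}.

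The real obstacle is the second step: certifying that the explicitly constructed $(x_v)$, although not orthogonal to all of $\mathcal{A}$, really is a product-limit of rational points of $U_m$. Made concrete this amounts to producing rational points approximating $(x_v)$ at any prescribed finite set of places while remaining integral elsewhere except at two auxiliary primes $q_1\mid u_1-2$, $q_2\mid u_1+2$ with $m-4$ a non-square modulo each, which contribute $(\inv_{q_i}\alpha,\inv_{q_i}\alpha_{1,-})$ equal to $(1/2,1/2)$ and $(1/2,0)$ respectively, thereby absorbing the $\alpha_{1,-}$-defect without upsetting $\alpha$-reciprocity — precisely the fibration method applied to one of the conic bundles $\pi\colon S_m\to\PP^1$. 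Everything else is bookkeeping with the local invariants from \S\ref{sec:invariants}.
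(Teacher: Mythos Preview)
Your argument is correct and matches the paper's proof: both show \eqref{eqn:WA} by varying $\inv_p\alpha$ on $\sU_m(\ZZ_p)$, and both reduce \eqref{eqn:SA} to producing an integral adelic point orthogonal to $\alpha$ but not to $\alpha_{1,-}$, using (i) surjectivity of $(\inv_p\alpha,\inv_p\alpha_{1,-})$ at $p$ from Propositions~\ref{prop:prolific}/\ref{prop:3_5}, and (ii) the identification $\overline{U_m(\QQ)}=\big(\prod_vU_m(\QQ_v)\big)^{\alpha}$, which the paper obtains by citing \cite{SS91}. Your invocations of Lemmas~\ref{lem:invp at most primes}, \ref{lem:2}, \ref{lem:infinity} and Proposition~\ref{prop:prolific_even}, and the final paragraph about auxiliary primes $q_1,q_2$, are unnecessary: once you have full freedom for the pair $(\inv_p\alpha,\inv_p\alpha_{1,-})$ at the single prime $p$, you can adjust any given $\bu\in\sU_m(\Adele_\ZZ)$ at $p$ alone to land in $\sU_m(\Adele_\ZZ)^{\alpha}\setminus\sU_m(\Adele_\ZZ)^{\alpha_{1,-}}$, and the black-box \cite{SS91} (or the fibration method for conic bundles) already places this point in $\overline{U_m(\QQ)}$ without any explicit construction of approximating rationals.
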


\begin{proof}
	We only prove \eqref{eqn:SA}, as a similar argument shows that $\alpha$ gives an obstruction to weak approximation in this case.
	
	To prove \eqref{eqn:SA}, we note from \cite{SS91} that the Brauer-Manin obstruction is the only one to 
	weak approximation on $S_m$. But the Brauer group of $S_m$ modulo constants
	is generated by $\alpha$ by Lemma \ref{lem:Br of comp}.
	In particular, it follows that
	$$
		\overline{U_m(\QQ)} = (\prod_{v} U_m(\QQ_v))^{\alpha}.
	$$
	Hence to prove \eqref{eqn:SA}, it suffices to show that
	$\sU_m(\Adele_{\ZZ})^{\mathcal{A}} \neq \sU_m(\Adele_{\ZZ})^{\alpha}$.
	To show this, let $\bu \in \sU_m(\Adele_{\ZZ})$. 
	By Propositions \ref{prop:prolific}~and~\ref{prop:3_5} and \eqref{eqn:sum_alpha_i}, on $\sU_m(\ZZ_p)$
	the tuple $(\inv_p \alpha, \inv_p \alpha_{1,-})$ takes all possible
	values of $(\ZZ/2\ZZ)^2$.
	Hence there exists $\bu_p \in \sU_m(\ZZ_p)$ such that 
	$\inv_p \alpha(\bu_p) = -\sum_{v \neq p} \inv_v \alpha(\bu)$
	but $\inv_p \alpha_{1,-}(\bu_p) \neq -\sum_{v \neq p} \inv_v \alpha_{1,-}(\bu)$.
	Then the adele $\bu'$ given by replacing the $p$th part of $\bu$ by $\bu_p$
	satisfies $\bu' \in \sU_m(\Adele_{\ZZ})^{\alpha}$ but 
	$\bu' \notin \sU_m(\Adele_{\ZZ})^{\alpha_{1,-}}$, as required.
\end{proof}

We next consider the integral Hasse principle.

\begin{corollary} \label{cor:HP}
	Suppose that there exists a prime $p>5$ with $p \mid (m-4)$ such that 
	$m - 4 \notin \QQ_{p}^{*2}$.
	Then there is no Brauer--Manin obstruction to the integral Hasse principle for $\sU_m$.	
\end{corollary}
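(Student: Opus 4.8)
The plan is to show that for such $m$, the Brauer--Manin set $\sU_m(\Adele_\ZZ)_\bullet^{\Br}$ is non-empty by exhibiting an adelic point which is orthogonal to all of $\mathcal{A} = \langle \alpha_{1,-},\alpha_{2,-},\alpha_{3,-}\rangle$; since there is nothing to obstruct the integral Hasse principle if the Brauer--Manin set is non-empty (recall $\sU_m(\Adele_\ZZ) \neq \emptyset$ by hypothesis and $\sU_m(\Adele_\ZZ)_\bullet^{\Br} = \sU_m(\Adele_\ZZ)_\bullet^{\mathcal{A}}$), this suffices. The key observation is that by Proposition~\ref{prop:prolific} (valid for the given prime $p > 5$ when $\valp(m-4)$ is odd) or Proposition~\ref{prop:prolific_even} (when $\valp(m-4)$ is even), the local map $\sU_m(\Zp) \to \Hom(\mathcal{A},\QQ/\ZZ)$ has image containing elements realising \emph{every} value in $(\ZZ/2\ZZ)^3$ in the odd case, and enough values to cover all cosets in the even case; crucially the image is not contained in any proper subgroup.

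Concretely, I would argue as follows. Start with any adelic point $(\bu_v)_v \in \sU_m(\Adele_\ZZ)_\bullet$, which exists by hypothesis. For each of the three generators $\alpha_{i,-}$, the global sum $\sum_v \inv_v \alpha_{i,-}(\bu_v) \in \frac{1}{2}\ZZ/\ZZ$ is some element $c_i$; these assemble into $(c_1,c_2,c_3) \in (\ZZ/2\ZZ)^3$, equivalently a homomorphism $c \in \Hom(\mathcal{A},\QQ/\ZZ)$. We wish to modify the component $\bu_p$ at our chosen prime $p$ so that the new local contribution cancels $c$. In the case $\valp(m-4)$ odd this is immediate from Proposition~\ref{prop:prolific}: the local evaluation map at $p$ surjects onto $\Hom(\mathcal{A},\QQ/\ZZ)$, so we may pick $\bu_p'$ with $(\inv_p\alpha_{i,-}(\bu_p'))_i = (\inv_p\alpha_{i,-}(\bu_p))_i - c$; replacing $\bu_p$ by $\bu_p'$ yields an adelic point orthogonal to $\mathcal{A}$. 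In the case $\valp(m-4)$ even (but $m-4 \notin \QQ_p^{*2}$), one uses Proposition~\ref{prop:prolific_even}: here the reachable local invariant triples are $\{0,0,0\}$, $\{0,0,1/2\}$ (in all orderings) and $\{1/2,1/2,1/2\}$, and one checks that the \emph{differences} of reachable triples already exhaust $(\ZZ/2\ZZ)^3$ — indeed $(1/2,1/2,1/2) - (0,0,0) = (1/2,1/2,1/2)$, $(0,0,1/2)-(0,0,0)=(0,0,1/2)$ and its permutations, and sums of these give everything — so again one can adjust $\bu_p$ to kill $c$.

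The main obstacle — really the only subtlety — is the case $\valp(m-4)$ even: one must verify that the set of achievable local invariant triples from Proposition~\ref{prop:prolific_even}, while not all of $(\ZZ/2\ZZ)^3$, nonetheless has the property that $\{x - y : x,y \text{ achievable}\}$ generates (in fact equals) $(\ZZ/2\ZZ)^3$, so that the cancellation argument goes through regardless of the starting value $c$. This is a short finite check: from $\{0,0,0\}$ and the three permutations of $\{0,0,1/2\}$ one gets the three standard basis vectors as differences, so the difference set is all of $(\ZZ/2\ZZ)^3$. One should also note that modifying only the $p$-component does not disturb membership in $\sU_m(\Adele_\ZZ)_\bullet$ (the real component and all other $p$-adic components are untouched), and that the construction respects the $\pi_0$ condition at the real place trivially. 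Hence $\sU_m(\Adele_\ZZ)_\bullet^{\Br} = \sU_m(\Adele_\ZZ)_\bullet^{\mathcal{A}} \neq \emptyset$, which is precisely the assertion that there is no Brauer--Manin obstruction to the integral Hasse principle.
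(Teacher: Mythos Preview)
Your treatment of the odd-valuation case is correct and matches the paper: Proposition~\ref{prop:prolific} gives surjectivity of the local evaluation at $p$, so a single adjustment at $p$ kills any global obstruction.

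The even-valuation case, however, contains a genuine gap. You argue that because the set $S \subset (\ZZ/2\ZZ)^3$ of achievable local invariant triples at $p$ (namely $(0,0,0)$, the three permutations of $(0,0,1/2)$, and $(1/2,1/2,1/2)$) satisfies $S - S = (\ZZ/2\ZZ)^3$, one can adjust $\bu_p$ to cancel any $c$. But this is not the right criterion. Writing $x \in S$ for the current local triple at $p$ and $y \in S$ for the new one, the change in the global sum is $y - x$; to kill $c$ you need $y = x + c \in S$ for the \emph{given} $x$, not for some freely chosen $x$. Equivalently, writing $c' = \sum_{v \ne p} \inv_v \alpha_{i,-}(\bu_v)$, you need $c' \in S$. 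Since $|S| = 5$, the three triples $(0,1/2,1/2)$, $(1/2,0,1/2)$, $(1/2,1/2,0)$ are \emph{not} in $S$, and $c'$ can genuinely fall into this bad set: for instance when $m \equiv 1 \bmod 8$, Lemma~\ref{lem:2} forces the $2$-adic contribution to be a permutation of $(0,1/2,1/2)$, and if all other contributions away from $p$ vanish you are stuck.

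This is exactly why the paper's proof in the even case is ``more involved'' and explicitly modifies the adelic point at \emph{several} places. Having reduced (via Proposition~\ref{prop:prolific_even}) to the situation $\{\varepsilon_1,\varepsilon_2,\varepsilon_3\} = \{0,1/2,1/2\}$, the paper then carries out a case analysis: if a second prime $q > 5$ with $m-4 \notin \QQ_q^{*2}$ exists, adjust at $q$ as well; otherwise one knows $m - 4 = \pm n d^2$ with $n$ supported on $\{2,3,5\}$, and one modifies the components at $2$, $3$, $5$, or $\infty$ (using Lemma~\ref{lem:2}, explicit $\FF_3$- and $\FF_5$-points, and the sign trick at $\infty$) to force $\{\varepsilon_1,\varepsilon_2,\varepsilon_3\} \ne \{0,1/2,1/2\}$. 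Your argument would need an analogous multi-place analysis to be complete.
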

\begin{proof}
  We can assume that $\sU_m(\Adele_\ZZ) \neq \emptyset$, otherwise there is nothing to prove.
	We begin with the case where $\valp(m-4)$ is odd. There is a standard
	argument for deducing the result from Proposition \ref{prop:prolific},
	which we recall for completeness.
	It suffices to show that $\sU_m(\Adele_\ZZ)^{\mathcal{A}} \neq \emptyset$.
	Let $\bu \in \sU_m(\Adele_\ZZ)$. By Proposition \ref{prop:prolific},
	we find that there exists $\bu_p \in \sU_m(\ZZ_p)$ such that 
	$\inv_p \beta(\bu_p) = -\sum_{v \neq p} \inv_v \beta(\bu)$ for all $\beta \in \mathcal{A}$.
	Then the adele $\bu'$ given by replacing the $p$th part of $\bu$ by $\bu_p$
	satisfies $\bu' \in \sU_m(\Adele_\ZZ)^{\mathcal{A}}$, as required.

	We now consider the case where $\valp(m-4)$ is even and $m - 4 \notin \QQ_{p}^{*2}$.
	Here the argument is more involved as we need to consider more than one prime. 
	We may assume that
	we are not in the previous case, so that
	$m = 4 \pm n d^2$ for some positive $n, d \in \ZZ$ such that the prime divisors of $n$ are $\le 5$ while those of $d$ are $> 5$, and that $m - 4 \notin \QQ_p^{*2}$ for some $p \mid d$.
  
  Let $\bu \in \sU_m(\Adele_\ZZ)$ and let $\varepsilon_i = \sum_{v \neq p} \inv_v \alpha_{i,-}(\bu)$.
	We use our information about the possibilities for the local 
	invariants at $p$ given in Proposition \ref{prop:prolific_even}. A similar argument to the one above verifies that we can modify $\bu$ at $p$ so that $\bu \in \sU_m(\Adele_\ZZ)^{\mathcal{A}}$ unless $\{\ve_1, \ve_2, \ve_3\} = \{0, 1/2, 1/2\}$ as multisets. But, if there was a prime $q \mid d$ with $q \neq p$ and $m - 4 \notin \QQ_q^{\ast 2}$, then by Proposition~\ref{prop:prolific_even} we can  modify $\bu$ at $q$ so that $\{\ve_1, \ve_2, \ve_3\} \neq \{0, 1/2, 1/2\}$. We can thus assume that $p$ is the only prime dividing $d$ for which $m - 4 \notin \QQ_{p}^{*2}$. 
  To complete the proof, we will show that $\bu$ can be modified at $2$, $3$ and $5$ or at $\infty$ so that $\{\ve_1, \ve_2, \ve_3\} \neq \{0, 1/2, 1/2\}$.

  Firstly, we analyse the case $m - 4 < 0$. Without loss of generality assume that $(\ve_1, \ve_2, \ve_3) = (0, 1/2, 1/2)$. Since $p \mid d$ and $p > 5$ we have $m \le 4 - 7^2 < -8$ and thus as in the proof of Lemma~\ref{lem:infinity} we see that $|u_i| > 2$, $i = 1, 2, 3$ for any real point $(u_1, u_2, u_3)$ on $\sU_m$. The relation \eqref{eqn:sum_alpha_i} and Lemma~\ref{lem:infinity} imply that the local invariant maps for $\alpha_{i,-}(\bu)$ at infinity belong to $\{\{0, 0, 0\}, \{0, 1/2, 1/2\}\}$. Replacing the real component $(u_1, u_2, u_3)$ of $\bu$ by $(u_1, -u_2, -u_3)$, which is again a real point on $\sU_m$, shows that both possibilities for the local invariant maps at $\infty$ are realised. By doing so we modify $\bu$ at $\infty$ so that $(\ve_1, \ve_2, \ve_3) = (0, 0, 0)$, as required.
  
  We can now assume that $m - 4 = nd^2 > 0$. Since $m - 4 \notin \QQ_{p}^{*2}$ then $m \neq 4 + d^2$ and hence $n > 1$. Thus at least one of $2$, $3$ or $5$ must divide $n$. We will modify $\bu$ at $2$, $3$ and $5$ in a way such that $\ve_1 = \ve_2 = 0$. This clearly implies $\{\ve_1, \ve_2, \ve_3\} \neq \{0, 1/2, 1/2\}$. By Lemma~\ref{lem:invp at most primes} we have $\sum_{v \neq 2, 3, 5, p} \inv_v \alpha_{i,-}(\bu) = 0$ for $i = 1, 2$. Therefore it suffices to show that
  \begin{equation} \label{eq:inv for 1 and 2}
      \sum_{v \in \{2, 3, 5\}} \inv_v \alpha_{i, -}(\bu) = 0, \quad i = 1, 2.
  \end{equation}

  We keep the $3$-adic component of $\bu$ as is if $3 \nmid n$. In this case $\inv_3\alpha_{i, -}(\bu) = 0$ for $i = 1, 2$ by Lemma~\ref{lem:invp at most primes}. Alternatively, if $3 \mid n$, then we replace the $3$-adic component of $\bu$ by a $3$-adic lift of the smooth $\FF_3$-point $(0, 0, 1)$. A simple calculation of Hilbert symbols and \eqref{eqn:Hilbert_symbol} show that $\inv_3 \alpha_{i, -}(\bu) = 0$ for $i = 1, 2$ regardless of the parity of the $3$-adic valuation of $n$. Thus no matter if $3 \mid n$ or $3 \nmid n$ we get
  \begin{equation} \label{eq:inv at 3}
    \inv_3 \alpha_{i, -}(\bu) = 0, \quad i = 1, 2.
  \end{equation}
  For the primes $2$ and $5$ we follow different approaches depending on the residue of $m \bmod 8$. 

  If $m \nequiv 1 \bmod 8$ we do the following. We replace the $2$-adic component of $\bu$ with a point in $\sU_m(\ZZ_2)$ for which $\inv_2\alpha_{i, -}(\bu) = 0$ for $i = 1, 2$.
  By Lemma~\ref{lem:2} there is such a $2$-adic point. If $5 \mid n$ we replace the $5$-adic component of $\bu$ by a $5$-adic lift of the smooth $\FF_5$ point $(1, 1, 4)$. Once more a simple Hilbert symbol calculation shows that $\inv_5 \alpha_{i, -}(\bu) = 0$ for $i = 1, 2$ regardless of the parity of the $5$-adic valuation of $n$. Lastly, if $5 \nmid n$ we keep the $5$-adic component of $\bu$ as is, in which case the local invariant maps for $\alpha_{1, -}$ and $\alpha_{2, -}$ at $5$ are zero by Lemma~\ref{lem:invp at most primes}. We conclude that our modifications in this case yield
  \begin{equation*}
      \inv_2 \alpha_{i, -}(\bu) = \inv_5 \alpha_{i, -}(\bu) = 0, \quad i = 1,2.
  \end{equation*}
  This and \eqref{eq:inv at 3} clearly imply \eqref{eq:inv for 1 and 2}.

  Assume now that $m \equiv 1 \bmod 8$. Here we replace the $2$-adic component of $\bu$ with a point in $\sU_m(\ZZ_2)$ for which the local invariant maps for $\alpha_{i, -}$ at 2 are equal to $(1/2, 1/2, 0)$. We can do so by Lemma~\ref{lem:2}. We have $m = 4 + nd^2 \equiv 4 + n \bmod 8$, hence $n$ must be congruent to $5 \bmod 8$. Since the only primes which divide $n$ in this case are $3$ and $5$ this is only possible if $\valn_5(n)$ is odd while $\valn_3(n)$ is even. We replace the $5$-adic component of $\bu$ by a $5$-adic lift of the smooth $\FF_5$-point $(0, 0, 3)$. Therefore
  \begin{equation*}
      \inv_2 \alpha_{i, -}(\bu) = \inv_5 \alpha_{i, -}(\bu) = \frac{1}{2}, \quad i = 1, 2.
  \end{equation*}
  Once more the above and \eqref{eq:inv at 3} imply \eqref{eq:inv for 1 and 2}. This concludes the proof.
\end{proof}

\subsection{Examples of Brauer--Manin obstruction} \label{sec:BM_examples}
We now use the calculations from the previous section to give examples of a Brauer--Manin obstruction to the integral Hasse principle and strong approximation.
For each prime $p$ including $\infty$ let $\alpha_p(\bu)$ denote the Hilbert symbol corresponding to $\alpha(\bu)$. Our results are inspired by the results in \cite[\S 8]{GS17}; in particular we show how the counter-examples to the integral Hasse principle given in \cite[Prop.~8.1(i), 8.2, 8.3]{GS17} can be explained via the Brauer--Manin obstruction. 

\begin{proposition} \label{prop:alpha GS8.1i}
  Let $m = 4 \pm 2d^2$ where $d$ is a positive integer whose prime divisors are congruent to $\pm 1 \bmod 8$ if $m - 4 > 0$ and to $1$ or $3 \bmod 8$ if $m - 4 < 0$. If $m = 4 + 2d^2$ assume that $d \bmod 9 \in \{0, \pm 3, \pm 4\}$. Then there is a Brauer--Manin obstruction to the integral Hasse principle for $\sU_m$.
\end{proposition}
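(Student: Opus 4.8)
The goal is to verify the two conditions defining a Brauer--Manin obstruction to the integral Hasse principle for $\sU_m$, namely $\sU_m(\Adele_\ZZ)\neq\emptyset$ and $\sU_m(\Adele_\ZZ)^{\Br}=\emptyset$. The plan is to obtain the obstruction already from the single class $\alpha=(u_1^2-4,m-4)$. This is legitimate provided $m$ satisfies the hypotheses of Corollary~\ref{cor:generic}: every prime divisor of $d$ is odd, hence $d$ is odd, so $m=4\pm 2d^2\equiv 2$ or $6\bmod 8$ is not a square; $m-4=\pm 2d^2$ and $-(m-4)=\mp 2d^2$ are plainly not squares; and $m(m-4)$ is not a square by Lemma~\ref{lem:m(m-4)}. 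So $\mathcal{A}\to\Br U_m/\Br\QQ$ is an isomorphism and $\alpha=\alpha_{1,-}+\alpha_{2,-}+\alpha_{3,-}\in\mathcal{A}$ by \eqref{eqn:sum_alpha_i}, and it remains to show that $\sum_v\inv_v\alpha(\bu)=\tfrac12$ for every $\bu\in\sU_m(\Adele_\ZZ)$. For nonemptiness of $\sU_m(\Adele_\ZZ)$ I would quote \cite[Prop.~6.1]{GS17}: here $m\equiv 2\bmod 4$, and reducing modulo $9$ — using $d\bmod 9\in\{0,\pm 3,\pm 4\}$ when $m=4+2d^2$, and $d^2\bmod 9\in\{0,1,4,7\}$ when $m=4-2d^2$ — yields $m\not\equiv\pm 3\bmod 9$.

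Next I would compute $\inv_v\alpha(\bu)$ at each place, using the representations \eqref{eq:alpha different reps} and the Hilbert symbol formula \eqref{eqn:Hilbert_symbol}. For $p\nmid 2(m-4)$ the local invariant vanishes by Lemma~\ref{lem:invp at most primes}(1). For an odd prime $p\mid d$ one has $\valp(m-4)=2\valp(d)$ even, while $\pm 2$ is a quadratic residue modulo $p$: indeed $\left(\tfrac{2}{p}\right)=1$ as $p\equiv\pm 1\bmod 8$ in the case $m-4>0$, and $\left(\tfrac{-2}{p}\right)=1$ as $p\equiv 1,3\bmod 8$ in the case $m-4<0$; hence $m-4\in\QQ_p^{*2}$ and the invariant vanishes by Lemma~\ref{lem:invp at most primes}(2). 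At the real place the invariant vanishes by Lemma~\ref{lem:invp at most primes}(3) when $m=4+2d^2>4$, and by Lemma~\ref{lem:infinity} when $m=4-2d^2<-8$ (note $d\neq 2$ by the hypothesis on the prime divisors of $d$). So the only place that can contribute is $p=2$.

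The central step is therefore to show $\inv_2\alpha(\bu)=\tfrac12$ for \emph{every} $\bu\in\sU_m(\ZZ_2)$. Since $d$ is odd we have $\valn_2(m-4)=1$ and $(m-4)/2\equiv\pm 1\bmod 8$. Reducing \eqref{def:Um} modulo $2$ and modulo $4$ and using $m\equiv 2\bmod 4$, I would eliminate both the possibility that all three $u_i$ are even (which forces $4\mid m$) and the possibility that exactly one $u_i$ is odd; thus at least one coordinate, say $u_i$, is a $2$-adic unit, so $u_i^2-4\equiv 5\bmod 8$. A short Hilbert symbol calculation then gives
\[
  (u_i^2-4,m-4)_2=(5,2)_2\,\bigl(5,(m-4)/2\bigr)_2=(-1)\cdot 1=-1,
\]
whence $\inv_2\alpha(\bu)=\tfrac14\bigl(1-(u_i^2-4,m-4)_2\bigr)=\tfrac12$ by \eqref{eq:alpha different reps} and \eqref{eqn:Hilbert_symbol}.

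Summing over all places, $\sum_v\inv_v\alpha(\bu)=\tfrac12\neq 0$ for every $\bu\in\sU_m(\Adele_\ZZ)$, so $\sU_m(\Adele_\ZZ)^{\Br}=\sU_m(\Adele_\ZZ)^{\mathcal{A}}\subseteq\sU_m(\Adele_\ZZ)^{\alpha}=\emptyset$ (the equality by Corollary~\ref{cor:generic}, the inclusion since $\alpha\in\mathcal{A}$), which together with $\sU_m(\Adele_\ZZ)\neq\emptyset$ is the asserted Brauer--Manin obstruction. I expect the $2$-adic step to be the main obstacle: one must show that $\inv_2\alpha$ is \emph{constantly} $\tfrac12$ on $\sU_m(\ZZ_2)$, which needs the parity case analysis of the coordinates rather than the mere existence statement of Lemma~\ref{lem:2}. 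The congruence conditions on $d$ enter precisely to deal with the remaining places: those modulo $8$ (on the odd primes dividing $d$) force $m-4\in\QQ_p^{*2}$ for all such $p$, while the one modulo $9$ (in the case $m=4+2d^2$) keeps $\sU_m(\Adele_\ZZ)$ nonempty.
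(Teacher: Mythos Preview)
Your proof is correct and follows essentially the same approach as the paper: both show $\inv_p\alpha(\bu)=0$ for $p\neq 2$ via Lemma~\ref{lem:invp at most primes} (using the congruence conditions on the prime divisors of $d$ to force $m-4\in\QQ_p^{*2}$ for odd $p\mid d$, and Lemma~\ref{lem:infinity} at $\infty$), and $\inv_2\alpha(\bu)=\tfrac12$ via a parity argument forcing some coordinate of any $\bu\in\sU_m(\ZZ_2)$ to be a $2$-adic unit. Your explicit verification of the hypotheses of Corollary~\ref{cor:generic} and the detailed $\bmod\,9$ check for local solubility are extra care not spelled out in the paper's proof; conversely, your extra elimination of the ``exactly one $u_i$ odd'' case at $p=2$ is harmless but unnecessary, since ruling out the all-even case already yields a $2$-adic unit coordinate.
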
 

\begin{proof}

  In the case $m = 4 + 2d^2$ the assumption on the residue of $d \bmod 9$ ensures that $\sU_m(\Adele_\ZZ) \neq \emptyset$ while if $m = 4 - 2d^2$ then $\sU_m(\Adele_\ZZ) \neq \emptyset$ for any $d$, as is verified using \cite[Prop.~6.1]{GS17}. 
  We will show that for each point $\bu \in \sU_m(\Zp)$ we have
  \begin{equation*}
    \inv_p \alpha(\bu) =
    \begin{cases}
      1/2 &\mbox{if } p = 2, \\
      0 &\mbox{otherwise.}    
    \end{cases}
  \end{equation*}
  This shows that $\alpha$ gives rise to a Brauer--Manin obstruction to the 
  integral Hasse principle.
 
 For each prime $p \le \infty$ and  each $\bu \in \sU_m(\Zp)$ with $u_i^2 \neq 4$, by \eqref{eq:alpha different reps} we have
  \begin{equation*}
    \alpha_p(\bu) 
    = (u_i^2 - 4, \pm 2d^2)_p = (u_i^2 - 4, \pm 2)_p ,
     \quad i \in \{1, 2, 3\}.
  \end{equation*}
  We make use of the relation \eqref{eqn:sum_alpha_i}. If $p \nmid 2d$ the correctness of our claim follows from case $(1)$ of Lemma~\ref{lem:invp at most primes}. On the other hand, if $p \mid d$, then the congruence restrictions modulo $8$ on $p$ imply that the second argument of the Hilbert symbol is a square in $\Qp^\ast$. By case $(2)$ of Lemma~\ref{lem:invp at most primes} we are done. If $m - 4 > 0$ our statement holds for $p = \infty$ by Lemma~\ref{lem:infinity}. Alternatively, if $m - 4 < 0$ then the assumption on the residue of $d \bmod 9$ implies that $d > 1$. Moreover, the least prime that can divide $d$ is $3$ and hence $m \le -14$. The result for $p = \infty$ now follows from Lemma \ref{lem:infinity}.

  We can now assume that $p = 2$. For each point $\bu \in \sU_m(\ZZ_2)$ we have $2 \nmid \bu$. Indeed, if $2 \mid \bu$, then the left hand side of \eqref{def:Um} would have been congruent to $0$ or $4 \bmod 8$, while the right hand is congruent to $2$ or $6 \bmod 8$ since $d$ is odd and $d^2 \equiv 1 \bmod 8$; contradiction. Let $u_i$ be the coordinate of $\bu$ which is a $2$-adic unit. Then $u_i^2 - 4 \equiv 5 \bmod 8$ and since $\pm 2d^2 \equiv \pm 2 \bmod 8$ we have
  \begin{equation*}
    \alpha_2(\bu) 
    = (5, \pm 2)_2
    = -1.
  \end{equation*} 
  By \eqref{eqn:Hilbert_symbol} this concludes the proof of Proposition~\ref{prop:alpha GS8.1i}. 
\end{proof}

\begin{remark} \label{rem:GS_HP_holds}
  The Hasse principle need not fail if there is a prime divisor of $d$ which does not satisfy the assumptions imposed in the statement of Proposition~\ref{prop:alpha GS8.1i}. For $m = 4 - 2\cdot 5^2 = -46$, one easily checks that the equation \eqref{def:Um} has the integer solution $(3, 7, 8)$. Moreover, for $m = 4 - 2\cdot 7^2 = -94$ we have $(5,5,9) \in \sU_m(\ZZ)$. Alternatively, for $m = 4 + 2d^2$ we have $(-1, 1, 4) \in \sU_{4 + 2\cdot 3^2 }(\ZZ)$ and $(-3, 3, 3) \in \sU_{ 4 + 2\cdot 5^2}(\ZZ)$.
\end{remark}

\begin{proposition} \label{prop:alpha GS8.2}
  Let $m = 4 + 12d^2$ where $d$ is an odd integer whose prime divisors are congruent to $\pm 1 \bmod 12$ and such that $d^2 \equiv 25 \bmod 32$. Then there is a Brauer--Manin obstruction to the integral Hasse principle for $\sU_m$.
  \end{proposition}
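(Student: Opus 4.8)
The plan is to follow the strategy of Proposition~\ref{prop:alpha GS8.1i} and show that the single quaternion algebra $\alpha$ already obstructs the integral Hasse principle: I will prove that $\inv_p\alpha(\bu)$ depends only on $p$, not on $\bu\in\sU_m(\ZZ_p)$, and equals $1/2$ for $p=3$ and $0$ for every other place, so that $\sum_v\inv_v\alpha(\bu_v)=1/2\neq0$ for every adelic point. First I would record the arithmetic input. Since every prime divisor of $d$ is $\equiv\pm1\bmod12$ we have $3\nmid d$, hence $m\equiv0\bmod4$ and $m\equiv7\bmod9$, so $\sU_m(\Adele_\ZZ)\neq\emptyset$ by \cite[Prop.~6.1]{GS17}. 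The hypothesis $d^2\equiv25\bmod32$ together with $d$ odd forces $12d^2\equiv44\bmod128$, whence $m\equiv48\bmod128$; in particular $m\equiv48\bmod64$ is a quadratic non-residue modulo $64$, so $m$ is not a perfect square, and together with Lemma~\ref{lem:m(m-4)} and the fact that $-(m-4)=-12d^2<0$ this shows that $m$ satisfies the hypotheses of Corollary~\ref{cor:generic}. Consequently $\alpha$ is a non-constant element of $\Br U_m$ (Lemma~\ref{lem:Br of comp}), the representations \eqref{eq:alpha different reps} and the relations of Proposition~\ref{prop:Br1 Um} are available, and, writing $m-4=12d^2=3\cdot(2d)^2$, one has $\alpha_p(\bu)=(u_i^2-4,3)_p$ for any coordinate $i$ with $u_i\neq\pm2$.

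Next I would dispatch the easy places. For $p\nmid 6d$, i.e.\ $p\nmid 2(m-4)$, Lemma~\ref{lem:invp at most primes}(1) gives $\inv_p\alpha_{i,-}(\bu)=0$ for all $i$, so $\inv_p\alpha(\bu)=0$ by \eqref{eqn:sum_alpha_i}. For $p\mid d$ we have $p\equiv\pm1\bmod12$, so $3$, and hence $m-4$, is a square in $\QQ_p^*$, and $\inv_p\alpha(\bu)=0$, either directly from $(u_i^2-4,3)_p=1$ or via Lemma~\ref{lem:invp at most primes}(2) and \eqref{eqn:sum_alpha_i}. At $\infty$, $m-4=12d^2>0$ is a real square, so $\inv_\infty\alpha(\bu)=0$. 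At $p=3$ we have $v_3(m-4)=1$ and $m\equiv1\bmod3$, so by the argument in the proof of Proposition~\ref{prop:3_5}(1) every $\ZZ_3$-point reduces, up to the $S_3$-action, to $(0,0,1)$ or $(0,0,2)$; taking the coordinate $u_j\equiv0\bmod3$ one gets $u_j^2-4\equiv2\bmod3$, a non-residue, so $\inv_3\alpha(\bu)=1/2$ for every $\bu\in\sU_m(\ZZ_3)$.

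The main point — and the only place where the precise congruence $d^2\equiv25\bmod32$ is used — is $p=2$, where I must show $\inv_2\alpha(\bu)=0$ for every $\bu\in\sU_m(\ZZ_2)$. Since $m$ is even, reducing \eqref{def:Um} modulo $2$ shows that $\bu$ cannot have exactly one odd coordinate. If $\bu$ has an odd coordinate $u_i$, then $u_i^2-4\equiv5\bmod8$ is a $2$-adic unit congruent to $1\bmod4$, so $(u_i^2-4,3)_2=1$ and $\inv_2\alpha(\bu)=0$. It therefore suffices to rule out $2$-adic points with all three coordinates even: setting $u_i=2v_i$ turns \eqref{def:Um} into $v_1^2+v_2^2+v_3^2-2v_1v_2v_3=m/4\equiv4\bmod8$, which forces an even number of the $v_i$ to be odd; the case of two odd $v_i$ is impossible already modulo $4$, and if all $v_i$ are even, writing $v_i=2w_i$ gives $w_1^2+w_2^2+w_3^2-4w_1w_2w_3=m/16\equiv3\bmod8$ — this is exactly the point where $m\equiv48\bmod128$, hence $d^2\equiv25\bmod32$, enters — which forces all $w_i$ odd and hence the left-hand side $\equiv7\bmod8$, a contradiction. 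Thus every $2$-adic point has an odd coordinate and $\inv_2\alpha(\bu)=0$. Summing the invariants over all places gives $\sum_v\inv_v\alpha(\bu_v)=1/2$ for all $\bu\in\sU_m(\Adele_\ZZ)$, so $\sU_m(\Adele_\ZZ)^{\Br}=\emptyset$ while $\sU_m(\Adele_\ZZ)\neq\emptyset$, which is the asserted Brauer--Manin obstruction. I expect the only genuine work to be the $2$-adic case analysis above, and in particular the bookkeeping modulo $128$ that makes all-even $2$-adic points impossible precisely under the hypothesis $d^2\equiv25\bmod32$; everything else is a transcription of the proof of Proposition~\ref{prop:alpha GS8.1i}.
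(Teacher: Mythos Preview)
Your proposal is correct and follows essentially the same approach as the paper: you show that the single element $\alpha$ obstructs, with $\inv_3\alpha=1/2$ and all other local invariants zero, and you handle the key $2$-adic step by successive halving and congruence arguments to rule out all-even $\ZZ_2$-points. The only cosmetic difference is that the paper uses the alternative form \eqref{eqn:GS} to show no $y_i$ is a unit and then works modulo $16$, whereas you do the equivalent bookkeeping directly on the substituted equations modulo $2$, $4$, and $8$; the content is the same, and your explicit verification of the hypotheses of Corollary~\ref{cor:generic} is a welcome addition.
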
 

\begin{proof}
	We will show that for each point $\bu \in \sU_m(\Zp)$ we have
  \begin{equation*}
    \inv_p \alpha(\bu) =
    \begin{cases}
      1/2 &\mbox{if } p = 3, \\
      0 &\mbox{otherwise,}    
    \end{cases}
  \end{equation*}
  so that $\alpha$ again gives an obstruction to the Hasse principle.

  Once more, the assumption $m = 4 + 12d^2$ implies that $\sU_m(\Adele_{\ZZ}) \neq \emptyset$ \cite[Prop.~6.1]{GS17}. For each  $p \le \infty$ and  each $\bu \in \sU_m(\Zp)$ with $u_i^2 \neq 4$, by \eqref{eq:alpha different reps} we have
  \begin{equation} \label{eq:alpha8.2}
    \alpha_p(\bu) 
    = (u_i^2 - 4, 12d^2)_p, \quad i \in \{1, 2, 3\}.
  \end{equation}

  If $p \nmid 6d^2$ our claim follows from case $(1)$ of Lemma~\ref{lem:invp at most primes}. If $p \mid d$, then $p \equiv \pm 1 \bmod 12$ and hence $3 \in {\Qp^\ast}^2$. Thus $12d^2 \in {\Qp^\ast}^2$ and case $(2)$ of Lemma~\ref{lem:invp at most primes} implies our claim. Finally, $m - 4 > 0$ and case $(3)$ of Lemma~\ref{lem:invp at most primes} is applicable for $p = \infty$. It remains to examine $p = 2, 3$.
  
  For $p=3$, the relation \eqref{eqn:sum_alpha_i} and Proposition \ref{prop:3_5} 
  shows the claim $\inv_3(\alpha(\bu)) = 1/2$ for all $\bu \in \sU_m(\Zp)$.

  Assume now that $p = 2$. Let $n_i = \valn_2(u_i^2 - 4)$ and let $\omega(u_i^2 - 4)$
  be $(u_i^2 - 4)/2^{n_i} \bmod 8$. Since $d$ is odd we have $d^2 \equiv 1 \bmod 8$ and thus by \eqref{eq:alpha8.2} we have
  \begin{equation*}
    \alpha_2(\bu)
    = (-1)^{\frac{\omega\(u_i^2 - 4\) - 1}{2} + n_i}.
  \end{equation*}
  Let $\bu \in \sU_m(\ZZ_2)$ be a point having one of its coordinates a $2$-adic unit, $u_i$ say. Then $u_i^2 \equiv 1 \bmod 8$, hence $n_i = 0$ and $\omega(u_i^2 - 4) = 5$. Therefore $\alpha_2(\bu) = 1$. 

  Arguing in the same way as in the proof of \cite[Prop.~8.2.]{GS17} one easily verifies that there are no other points in $\sU_m(\ZZ_2)$. Indeed, let $\bu \in \sU_m(\ZZ_2)$ have all of its coordinates divisible by two. Then we can write $\bu = 2\by$, in which case the equation defining $\sU_m$ takes the shape
  \begin{equation} \label{eq:Um with y}
    y_1^2 + y_2^2 + y_3^2 - 2y_1y_2y_3 = 1 + 3d^2,
  \end{equation}
  or alternatively 
  \begin{equation*}
    (y_3 - y_1y_2)^2 - 3d^2 = (y_2^2 - 1)(y_3^2 - 1),
  \end{equation*}
  
  If one of $y_1, y_2, y_3$ was a unit in $\ZZ_2$, then the last equation above would imply that $3$ is a quadratic residue modulo 8, a contradiction. On the other hand the reduction of \eqref{eq:Um with y} modulo $8$ implies that $4$ cannot divide all of the $y_i$'s simultaneously. Thus there are only three possibilities, $4$ divides two of the coordinates of $\bu$, one of them or none of them. The reduction of \eqref{eq:Um with y} modulo 16 rules out the first two cases since the right hand side is $12 \bmod 16$ by the assumption on $d$. At the same time the left hand side is $4$ and $8 \bmod 16$, respectively. In the remaining possibility we have $y_1 \equiv y_2 \equiv y_3 \equiv 2 \bmod 4$. Then \eqref{eq:Um with y} becomes
  \begin{equation*}
    z_1^2 + z_2^2 + z_3^2 - 4z_1z_2z_3 = \frac{1 + 3d^2}{4},
  \end{equation*}
  with $2 \nmid z_1z_2z_3$. It is clear that the left hand side is congruent to $7 \bmod 8$ while the right hand side is $3 \bmod 8$ since $d^2 \equiv 25 \bmod 32$: a contradiction. This concludes the proof of Proposition~\ref{prop:alpha GS8.2}.
\end{proof}

The following is a generalisation of \cite[Prop.~8.3]{GS17} (\emph{loc.~cit.} restricts to those $d$ whose prime divisors are congruent to $\pm 1 \bmod 20$).

\begin{proposition} \label{prop:alpha GS8.3}
  Let $m = 4 + 20d^2$ where $d \equiv \pm 4 \bmod 9$ is an odd integer whose prime divisors are congruent to $\pm 1 \bmod 5$. Then there is a Brauer--Manin obstruction to the  integral Hasse principle on $\sU_m$.
\end{proposition}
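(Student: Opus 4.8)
The plan is to follow the proofs of Propositions~\ref{prop:alpha GS8.1i} and~\ref{prop:alpha GS8.2}, with one essential difference: for these $m$ no single quaternion algebra $\alpha_v(\bu)$ is constant and non-zero, so the obstruction must come from the whole group $\mathcal{A}=\langle\alpha_{1,-},\alpha_{2,-},\alpha_{3,-}\rangle$ rather than from $\alpha$ alone. First I would note $\sU_m(\Adele_\ZZ)\neq\emptyset$: writing $m=4+20d^2=4(1+5d^2)$ shows $m\equiv0\bmod4$, and $d\equiv\pm4\bmod9$ gives $d^2\equiv7\bmod9$, hence $m\equiv4+2\cdot7\equiv0\bmod9$, so neither exceptional congruence of \cite[Prop.~6.1]{GS17} occurs. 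One also checks that $m$ satisfies the hypotheses of Corollary~\ref{cor:generic} (in particular $m$ is not a perfect square, since $m=c^2$ would make $e^2-5d^2=1$ hold with $e=c/2$, forcing $d$ even), so $\Br U_m/\Br\QQ$ is generated by the $\alpha_{i,-}$ and it suffices to show $\sU_m(\Adele_\ZZ)^{\mathcal{A}}=\emptyset$. Since $m-4=5(2d)^2$, at each place $v$ the algebra $\alpha$ is represented by the Hilbert symbol $(u_i^2-4,5)_v$ and $\alpha_{i,-}$ by $(u_i-2,5)_v$. For $\bu\in\sU_m(\ZZ_v)$, write $\phi_v(\bu)\in\Hom(\mathcal{A},\QQ/\ZZ)\cong(\ZZ/2\ZZ)^3$ for the homomorphism $\beta\mapsto\inv_v\beta(\bu)$; one must prove $\sum_v\phi_v(\bu_v)\neq0$ for every $(\bu_v)\in\sU_m(\Adele_\ZZ)$.

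Most places contribute the zero homomorphism. As $m-4>0$ we have $m-4\in\RR^{*2}$, so $\phi_\infty=0$ by Lemma~\ref{lem:invp at most primes}(3); if $p\nmid10d$ then $p\nmid2(m-4)$, so $\phi_p=0$ by Lemma~\ref{lem:invp at most primes}(1) (including $p=3$, as $3\nmid d$); and if $p\mid d$ then $p\equiv\pm1\bmod5$ gives $\bigl(\tfrac5p\bigr)=1$, hence $5\in\QQ_p^{*2}$ and $m-4\in\QQ_p^{*2}$, so $\phi_p=0$ by Lemma~\ref{lem:invp at most primes}(2). The primes dividing $m-4=2^2\cdot5\cdot d^2$ lie in $\{2,5\}\cup\{p:p\mid d\}$, so only $v=2$ and $v=5$ remain. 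At $v=5$ we have $\valn_5(m-4)=1$ (since $5\nmid d$), so Proposition~\ref{prop:3_5}(2) gives that the image of $\bu\mapsto\phi_5(\bu)$ equals $\Hom(\mathcal{A},\QQ/\ZZ)\setminus\{0\}$; in particular $\phi_5(\bu)\neq0$ for every $\bu\in\sU_m(\ZZ_5)$.

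The heart of the argument is $v=2$, where I claim $\phi_2(\bu)=0$ for all $\bu\in\sU_m(\ZZ_2)$. Since $d$ is odd, $1+5d^2\equiv6\bmod8$, so $m\equiv0\bmod8$; in particular $m\not\equiv3\bmod4$ and $\sU_m(\ZZ_2)\neq\emptyset$. The key point is that $\sU_m$ has no $\ZZ_2$-point with all three coordinates even: setting $\bu=2\by$ in \eqref{def:Um} gives $y_1^2+y_2^2+y_3^2-2y_1y_2y_3=m/4=1+5d^2\equiv6\bmod8$, but a short check on the parities of $y_1,y_2,y_3$ (in the spirit of the $2$-adic descent in the proof of Proposition~\ref{prop:alpha GS8.2}) shows $y_1^2+y_2^2+y_3^2-2y_1y_2y_3$ is never $\equiv6\bmod8$. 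Reducing \eqref{def:Um} modulo $4$ rules out exactly two coordinates being even, so every $\bu\in\sU_m(\ZZ_2)$ has either all coordinates odd or exactly one even. If all $u_i$ are odd, each $u_i-2$ is a $2$-adic unit, so $(u_i-2,5)_2=1$ (both entries units, $5\equiv1\bmod4$) and $\inv_2\alpha_{i,-}(\bu)=0$ for all $i$; hence $\phi_2(\bu)=0$. If, say, $u_1$ is even and $u_2,u_3$ odd, the same computation gives $\inv_2\alpha_{2,-}(\bu)=\inv_2\alpha_{3,-}(\bu)=0$, and then \eqref{eqn:sum_alpha_i} forces $\inv_2\alpha_{1,-}(\bu)=\inv_2\alpha(\bu)$; but $u_2^2-4\equiv5\bmod8$ is a $2$-adic unit, so $\inv_2\alpha(\bu)=\tfrac14\bigl(1-(u_2^2-4,5)_2\bigr)=0$. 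Thus $\phi_2(\bu)=0$ in every case.

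Combining the above, for any $(\bu_v)\in\sU_m(\Adele_\ZZ)$ we get $\sum_v\phi_v(\bu_v)=\phi_5(\bu_5)\neq0$, so $\sU_m(\Adele_\ZZ)^{\mathcal{A}}=\emptyset$; as $\mathcal{A}\cong\Br U_m/\Br\QQ$ by Corollary~\ref{cor:generic}, this says $\sU_m(\Adele_\ZZ)^{\Br}=\emptyset$, which with $\sU_m(\Adele_\ZZ)\neq\emptyset$ gives the asserted Brauer--Manin obstruction to the integral Hasse principle. The step I expect to be the main obstacle is the place $v=2$: the whole argument rests on the $2$-adic descent excluding points with all coordinates even, and if some $\bu\in\sU_m(\ZZ_2)$ had $\phi_2(\bu)\neq0$ it could cancel the contribution at $5$ and break the obstruction. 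Finally, note that here $\alpha$ by itself produces \emph{no} obstruction, since $\inv_v\alpha$ vanishes at every place away from $5$ while taking both values at $5$; the obstruction genuinely uses the rank-$3$ group $\mathcal{A}$.
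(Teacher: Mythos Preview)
Your argument is correct and follows essentially the same route as the paper's proof: show $\phi_v=0$ for all $v\neq5$ (using Lemma~\ref{lem:invp at most primes} away from $2,5$, and a $2$-adic analysis ruling out points with all coordinates even), then invoke Proposition~\ref{prop:3_5}(2) at $5$ to see that $\phi_5$ never vanishes. The only cosmetic difference is at $p=2$: the paper rewrites the equation as $(y_3-y_1y_2)^2-5d^2=(y_1^2-1)(y_2^2-1)$ to exclude all-even points, whereas your direct $\bmod\,8$ parity check of $y_1^2+y_2^2+y_3^2-2y_1y_2y_3$ achieves the same end.
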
 

\begin{proof}
We have $\sU_m(\Adele_\ZZ) \neq \emptyset$ \cite[Prop.~6.1]{GS17}. Our strategy this time requires a more detailed analysis on the local invariant maps for the $\alpha_{i, -}$.
We first show that for each point $\bu \in \sU_m(\Zp)$ we have
  \begin{equation} \label{eqn:p<>5}
    \inv_p \alpha_{i,-}(\bu) =
      0 \quad \mbox{if } p \neq 5.
  \end{equation}
  We explain at the end how to get a Brauer--Manin obstruction to the integral Hasse principle.
  
 We once more restrict attention $\bu \in \sU_m(\Zp)$ with $u_i^2 \neq 4$. 
As in Proposition~\ref{prop:alpha GS8.2}, we deal with primes $p \neq 2, 5$ with the help of cases $(1)$, $(2)$ and $(3)$ of Lemma~\ref{lem:invp at most primes}. On the other hand, for each $i = 1, 2, 3$ we have 
\begin{equation} \label{eq:alpha8.3}
  (u_i - 2, 20d^2)_2
  = (u_i - 2, 5)_2
  =
   (-1)^{\valn_2(u_i - 2)}.
\end{equation}
Assume  there is $\bu \in \sU_m(\ZZ_2)$ with $2 \mid \bu$. Following the proof of \cite[Prop.~8.3]{GS17} we make the change of variables $y_i = u_i/2$ for all $i = 1, 2, 3$. Thus
\begin{equation*}
  y_1^2 + y_2^2 + y_3^2 - 2y_1y_2y_3
  = 1 + 5d^2.
\end{equation*}
If $2$ divided all $y_i$ then the left hand side of the last equation above would have been congruent to $0 \bmod 4$ while the right hand side would have been congruent to $2 \bmod 4$. A contradiction! Assume that $2 \nmid y_1$ which implies that $y_1^2 \equiv 1 \bmod 8$. We can rewrite the equation defining $\sU_m$ in the new variables in the following way
\begin{equation*}
  (y_3 - y_1y_2)^2 - 5d^2 = (y_1^2 - 1)(y_2^2 - 1).
\end{equation*}
The right hand side is congruent to $0 \bmod 8$. This implies that either $5$ is a square modulo $8$ or $2 \mid d$. Both are false. 

Thus for all $\bu \in \sU_m(\ZZ_2)$ least one of the coordinates of $\bu$ a 2-adic unit. However, $m$ is even and then the reduction of \eqref{def:Um} modulo $2$ implies that at least two of the coordinates of $\bu$ must be $2$-adic units, $u_1$ and $u_2$ say. By \eqref{eq:alpha8.3} the Hilbert symbols for $\alpha_{1, -}(\bu)$ and $\alpha_{2, -}(\bu)$ are equal to $1$. Moreover, $u_1^2 - 4 \equiv 5 \bmod 8$ and thus by \eqref{eq:alpha different reps} we have
\begin{equation*}
  \alpha_2(\bu)
  =(5, 5)_2
  = 1.
\end{equation*}
We conclude that the local invariant map for $\alpha(\bu)$ is zero. Since all local invariant maps except possibly the one for $\alpha_{3, -}(\bu)$ vanish, then the relation \eqref{eqn:sum_alpha_i} implies that $\inv_p \alpha_{3, -}(\bu)$ must be zero as well.
This proves \eqref{eqn:p<>5}.

We now consider $p = 5$. Let $\bu \in \sU_m(\ZZ_5)$. Then by Proposition \ref{prop:3_5}, there exists $i$ (depending on $\bu$) such that $\inv_5 \alpha_{i,-}(\bu) = 1/2$. It follows from this and \eqref{eqn:p<>5} that for every adelic point, there is some $\alpha_{i,-}$ for which the sum of all local invariants is equal to $1/2$. Hence $\sU_m(\Adele_\ZZ)^{\Br} = \emptyset$ as required.
\end{proof}

\begin{remark}
	In Propositions \ref{prop:alpha GS8.1i} amd \ref{prop:alpha GS8.2}, we obtained
	a Brauer--Manin obstruction by considering the single element $\alpha$,
	i.e.~we showed $\sU_m(\Adele_\ZZ)^{\alpha} = \emptyset$.
	
	In Proposition \ref{prop:alpha GS8.3} however, the obstruction is more
	complicated. Namely, a simple application of Proposition \ref{prop:3_5}
	shows that $\sU_m(\Adele_\ZZ)^{\beta} \neq \emptyset$ for all $\beta \in \Br U_m$,
	so to obtain an obstruction one really needs to consider the whole Brauer group,
	as we have done.
\end{remark}

We finish with an explicit example of a surface which fails weak approximation, but also has a failure of strong approximation which is \emph{not} explained by the failure of weak approximation (many similar examples can be easily constructed using Corollary \ref{cor:SA}).
\begin{proposition} \label{prop:fail_SA}
	Let $m = 4 + 41$. Then $\sU_m(\ZZ)$ is Zariski dense. But:
	\begin{enumerate}
		\item For all $\bu \in \sU_m(\QQ)$
		we have $\bu \bmod 41 \not \equiv (1,1,2) \in \sU_m(\FF_{41})$.
		\item There exists $\bu \in \sU_m(\QQ)$
	with $\bu \bmod 41 \equiv (0,5,15) \in \sU_m(\FF_{41})$,
	but for every integer point $\bu \in \sU_m(\ZZ)$
	we have $\bu \bmod 41 \not \equiv (0,5,15)$.
	\end{enumerate}
\end{proposition}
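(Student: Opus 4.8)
The plan is to deduce everything from one explicit integral solution together with the local invariant computations of \S\ref{sec:invariants}, exploiting that $m-4=41$ is prime, $41\equiv 1\bmod 8$, and $45\not\equiv 3\bmod 4$, $45\not\equiv\pm 3\bmod 9$ (so $\sU_m(\Adele_\ZZ)\neq\emptyset$ by \cite[Prop.~6.1]{GS17}); moreover $m=45$ satisfies the hypotheses of Corollary~\ref{cor:generic} (here $-(m-4)=-41$ is not a square, so the auxiliary condition is vacuous), whence $\Br S_m/\Br\QQ=\langle\alpha\rangle$ and $\Br U_m/\Br\QQ=\mathcal A$. For the Zariski density I would exhibit $(0,3,6)\in\sU_m(\ZZ)$ and note that iterating the Vieta involutions $u_i\mapsto u_ju_k-u_i$ produces integral solutions of unbounded size, so $\sU_m(\ZZ)$ is infinite; Zariski density then follows from the reduction theory for Markoff surfaces as in \cite{GS17}.

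\emph{Statement (1).} We have $(1,1,2)\in\sU_m(\FF_{41})$ since $1+1+4-2\equiv 45\bmod 41$. Suppose $\bu\in\sU_m(\QQ)$ reduces to it; in particular $\bu$ is $41$-integral with $u_1\equiv 1\bmod 41$. I claim $\inv_v\alpha(\bu)=0$ for every place $v\neq 41$: this holds for $v=\infty$ as $41>0$ (Lemma~\ref{lem:infinity}), for $v=2$ as $41\in\QQ_2^{*2}$ (case (2) of Lemma~\ref{lem:invp at most primes}), and for an odd prime $p\neq 41$ it follows from case (1) of Lemma~\ref{lem:invp at most primes} when $\bu\in\sU_m(\ZZ_p)$, while if some $u_i\notin\ZZ_p$ one uses the representation $\alpha=(u_i^2-4,41)$ of \eqref{eq:alpha different reps} together with $\valp(u_i^2-4)=2\valp(u_i)$ being even, so $(u_i^2-4,41)_p$ is trivial. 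By global reciprocity in $\Br\QQ$ we get $\inv_{41}\alpha(\bu)=0$. But $u_1^2-4\equiv -3\bmod 41$ is a $41$-adic unit and $(u_1^2-4,41)_{41}=\left(\tfrac{-3}{41}\right)=-1$ (as $41\equiv 2\bmod 3$), whence $\inv_{41}\alpha(\bu)=1/2$: a contradiction.

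\emph{Statement (2).} Since $25+225\equiv 45\bmod 41$, the point $(0,5,15)$ lies in $\sU_m(\FF_{41})$, and it is smooth (it is not one of the four singular points of Lemma~\ref{lem:singularities}, as $\valn_{41}(m-4)=1$), hence lifts by Hensel to $\bu^{(41)}\in\sU_m(\ZZ_{41})$. To produce a rational point with this reduction I would argue as in the proof of Corollary~\ref{cor:SA}: by \cite{SS91} and Lemma~\ref{lem:Br of comp}, $\overline{U_m(\QQ)}=\big(\prod_v U_m(\QQ_v)\big)^{\alpha}$ (using that $U_m(\QQ_v)$ is open in $S_m(\QQ_v)$). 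Take the adelic point equal to $\bu^{(41)}$ at $41$ and to arbitrary points of $\sU_m(\ZZ_v)$ (resp.\ of $U_m(\RR)$) elsewhere: by the previous paragraph all invariants of $\alpha$ vanish away from $41$, and at $41$ we have $u_1^2-4\equiv -4\bmod 41$ with $(u_1^2-4,41)_{41}=\left(\tfrac{-4}{41}\right)=1$ (as $41\equiv 1\bmod 4$), so $\inv_{41}\alpha(\bu^{(41)})=0$; thus the adelic point lies in $\overline{U_m(\QQ)}$ and is $41$-adically approximated by some $\bu\in U_m(\QQ)$, which then reduces to $(0,5,15)\bmod 41$ (this $\bu$ need not be integral). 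Finally, suppose $\bu\in\sU_m(\ZZ)$ reduces to $(0,5,15)\bmod 41$; apply the reciprocity argument to $\alpha_{2,-}=(u_2-2,41)\in\Br U_m$. One has $\inv_v\alpha_{2,-}(\bu)=0$ for all $v\neq 41$ by Lemma~\ref{lem:invp at most primes} (using $\bu\in\sU_m(\ZZ_v)$ for all finite $v$, $41\in\QQ_2^{*2}$, and $m>4$), whereas $u_2-2\equiv 3\bmod 41$ gives, via \eqref{eqn:Hilbert_symbol}, $\inv_{41}\alpha_{2,-}(\bu)=\tfrac{1}{4}\big(1-\left(\tfrac{3}{41}\right)\big)=1/2$ (as $41\equiv 5\bmod 12$), contradicting $\sum_v\inv_v\alpha_{2,-}(\bu)=0$.

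The step I expect to require the most care is the construction of the rational point in (2): one must legitimately transfer weak approximation modulo the Brauer--Manin obstruction from the smooth projective conic bundle $S_m$ (where \cite{SS91} applies and $\Br S_m/\Br\QQ=\langle\alpha\rangle$) to the open surface $U_m$, and one should resist choosing an \emph{integral} adelic point — the approximating rational point is forced to acquire denominators at a place where $\alpha_{2,-}$ has non-trivial invariant, which is exactly what makes (2) consistent (a rational solution with the prescribed reduction exists, an integral one does not). The rest reduces to the three Legendre symbols $\left(\tfrac{-3}{41}\right)=-1$, $\left(\tfrac{-4}{41}\right)=1$, $\left(\tfrac{3}{41}\right)=-1$.
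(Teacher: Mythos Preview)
Your argument is correct and follows the same architecture as the paper: Zariski density via $(0,3,6)$ and the Markoff dynamics, the obstruction in (1) from $\alpha$ using that all invariants away from $41$ vanish while $u_1^2-4\equiv -3$ is a non-residue at $41$, and the obstruction to integral approximation in (2) from $\alpha_{2,-}$ using that $3$ is a non-residue mod $41$. Your treatment of (1) in fact spells out what the paper calls ``a minor variant of Lemma~\ref{lem:invp at most primes}'' for rational (possibly non-integral) points, by noting that if $u_i\notin\ZZ_p$ then $\valp(u_i^2-4)$ is even.

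The one genuine difference is the existence half of (2). The paper simply writes down the rational point $(-41/9,\,-26/3,\,4/3)$ and checks directly that it lies on $U_{45}$ and reduces to $(0,5,15)\bmod 41$. You instead produce such a point abstractly: form an adelic point with the desired $41$-adic component, check it is orthogonal to $\alpha$ (since $-4$ is a square mod $41$), and invoke \cite{SS91} to approximate it by a rational point on $S_m$, which is then forced into $U_m$. This is exactly the mechanism used in Corollary~\ref{cor:SA}, and the paper itself remarks after the proof that the \cite{SS91} route gives the stronger statement that \emph{every} $41$-adic lift of $(0,5,15)$ is approximable by a rational point. Your approach is more conceptual and explains why such a point must exist; the paper's is self-contained and avoids the (mild) care needed to pass weak approximation from $S_m$ to the open $U_m$. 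Either is acceptable, and the explicit point $(-41/9,-26/3,4/3)$ is worth recording as a sanity check.
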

\begin{proof}
	One verifies that $\sU_m(\ZZ) \neq \emptyset$, e.g.~$(0,3,6) 
	\in \sU_m(\ZZ)$. Zariski density now follows from \cite[(1.5)]{GS17}.
	
	(1) A minor variant of  Lemma \ref{lem:invp at most primes} shows that
	for all $\bu \in \sU(\QQ_p)$ we have
  \begin{equation} \label{eqn:p<>41_1}
    \inv_p \alpha(\bu) =
      0 \quad \mbox{if } p \neq 41
  \end{equation}
  (for $p=2$ we note that $41 \equiv 1 \bmod 8$ is a square in $\QQ_2^*$).
  At $41$ one can use Corollary \ref{cor:SA} to see that
  $U_m(\QQ)$ is not dense in $U_m(\QQ_{41})$. To make this explicit,
  following the proof  of Proposition \ref{prop:prolific},
  it suffices find a point $\bu \in \sU(\FF_{41})$ such that
  $u_1^2 -4$ is a non-zero quadratic non-residue. One easily
  verifies that $(1,1,2) \in \sU(\FF_{41})$ is such a point.
  Therefore the local invariant of $\alpha$ at $41$ of the lift of such a point
  is $1/2$, so by \eqref{eqn:p<>41_1} the corresponding adelic point
  cannot be approximated by a rational point.
	
	(2)  The rational point 
	$\bu = (-41/9, -26/3, 4/3)$ satisfies
	$\bu \bmod 41 \equiv (0,5,15) \in \sU_m(\FF_{41})$
	(viewed as an element of $\sU_m(\ZZ_{41})$).
	
	To prove that there is no integral point with this property,
	we show that there is a Brauer-Manin obstruction coming from
	$\alpha_{2,-} = (u_2 - 2, m-4)$. Lemma \ref{lem:invp at most primes} again implies that
	for all $\bu \in \sU_m(\ZZ_p)$ we have
  \begin{equation} \label{eqn:p<>41}
    \inv_p \alpha_{2,-}(\bu) =
      0 \quad \mbox{if } p \neq 41.
  \end{equation}
  At $41$, one has $5 -2 = 3  \notin \FF_{41}^{*2}$,
  hence $\inv_{41} \alpha_{2,-}(\bu) = 1/2$ for any $\bu \in \sU_m(\ZZ_{41})$
  with $\bu \equiv (0,5,15) \bmod 41$. Thus such a point cannot be approximated
  by an integer point, as claimed.
\end{proof}

In fact, as explained in the proof of Corollary \ref{cor:SA},  it follows from \cite{SS91} that the Brauer-Manin obstruction is the only one to weak approximation on $S_m$. So in Proposition \ref{prop:fail_SA}, one knows the stronger claim that any $41$-adic lift of $(0,5,15)$ can be approximated arbitrarily well by a rational point.

\begin{remark}
A family of failures of strong approximation was presented in \cite[\S 8]{GS17}, under the assumption that $n \mapsto \left(\frac{4(m-4)}{n}\right)$ is a primitive Dirichlet character modulo $n$. The example in Proposition \ref{prop:fail_SA} is not covered by this family, as the character $\left(\frac{4\cdot 41}{n}\right)$ is induced from a primitive character modulo $41$, since $41 \equiv 1 \bmod 4$

Moreover, the examples given in  \cite[\S 8]{GS17} involve congruences on the $u_i^2 - 4$. One easily verifies that these come from a Brauer-Manin obstruction associated to $\alpha$, in particular, they are in fact explained by a failure of \emph{weak} approximation.
\end{remark}

\subsection{Proof of Theorem \ref{thm:WA}}
We may assume that $m$ satisfies the hypotheses of Corollary \ref{cor:generic}, as the number of $m$ which fail this are $O(\sqrt{B})$.
If $m-4$ is divisible by a prime $p > 3$ to odd valuation, then $U_m$ fails weak approximation by Corollary \ref{cor:SA}. It is simple to see that the cardinality of $m$ to which this result does not apply is $O(B^{1/2})$. \qed

\subsection{Proof of Theorem \ref{thm:SA}}
Minor variant of the proof of Theorem \ref{thm:WA}. \qed

\subsection{Proof of Theorem \ref{thm:235}}
If $m$ satisfies the conditions of Corollary \ref{cor:generic},
then it follows from Corollary \ref{cor:HP} that any prime $p \mid m-4$
with $p > 5$ must divide $m-4$ to even multiplicity, thus the result holds
in this case.
Assume then that the conditions in Corollary \ref{cor:generic} fail. Then one of the following
holds
$$m-4, -(m-4),m, m(m-4) \in \QQ^{*2}.$$
The first two cases are included in the statement of the theorem.
Moreover, as explained in \S \ref{sec:assumptions}, in the other cases either there is  an integral point or the surface is singular.
This completes the proof. \qed

\subsection{Proof of Theorem \ref{thm:Br asympt}} 
We require the following well-known lemma.

\begin{lemma} \label{lem:number of d's}
	Let $n \in \NN$ and $R \subset (\ZZ/n\ZZ)^*$ be a non-empty 
	set of units.
	Then
	$$\#\{ d \in \NN : d \leq x, p \mid d \implies p \bmod n \in R\}
	\sim c_R x(\log x)^{ |R|/\varphi(n) - 1},$$
	as $x \to \infty$, for some $c_R > 0$.
\end{lemma}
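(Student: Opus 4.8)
This is a classical Landau–Selberg–Delange type statement: the counting function of integers all of whose prime factors lie in a fixed union of arithmetic progressions modulo $n$. I would prove it by the standard analytic route via the associated Dirichlet series and a Tauberian theorem, so the bulk of the argument is bookkeeping with $L$-functions of Dirichlet characters.

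First I would introduce the generating Dirichlet series
\[
  F(s) = \sum_{\substack{d \geq 1 \\ p \mid d \,\Rightarrow\, p \bmod n \in R}} \frac{1}{d^s}
       = \prod_{\substack{p \\ p \bmod n \in R}} \left(1 - p^{-s}\right)^{-1},
\]
valid for $\re s > 1$. The next step is to compare $F(s)$ with a product of powers of the Dirichlet $L$-functions $L(s,\chi)$ for $\chi$ ranging over the characters of $(\ZZ/n\ZZ)^*$. Using orthogonality, the indicator of the condition $p \bmod n \in R$ is $\frac{1}{\varphi(n)}\sum_{\chi} \left(\sum_{a \in R} \bar\chi(a)\right)\chi(p)$, so taking logarithms and exponentiating gives
\[
  F(s) = \prod_{\chi} L(s,\chi)^{\,c_\chi}\, G(s), \qquad c_\chi = \frac{1}{\varphi(n)}\sum_{a \in R}\bar\chi(a),
\]
where $G(s)$ is a Dirichlet series (coming from the difference between $\sum 1/p^s$ and $\sum_{\ell \geq 1}\sum_p 1/(\ell p^{\ell s})$, and from the finitely many ramified primes $p \mid n$) that converges absolutely and is non-vanishing on a region $\re s > 1/2 + \delta$. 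The exponent attached to the principal character is $c_{\chi_0} = |R|/\varphi(n)$, which is the number that appears in the exponent of the logarithm. The remaining $L(s,\chi)$ with $\chi \neq \chi_0$ are entire and non-vanishing at $s=1$, so they contribute only a non-zero constant there and holomorphic factors nearby (one should note that a priori $c_\chi$ is a complex number, but $F(s)$ has non-negative coefficients and the product manifestly defines a single-valued holomorphic function on a neighbourhood of $\re s \geq 1$ minus the point $s=1$, so no branch issues arise).

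Consequently $F(s) = (s-1)^{-|R|/\varphi(n)} H(s)$ with $H$ holomorphic and non-zero in a neighbourhood of $s = 1$ and in a standard zero-free region to its left, so $F(s)$ has an algebraic singularity of exponent $|R|/\varphi(n)$ at $s = 1$. Applying a Tauberian theorem of Landau–Selberg–Delange type (e.g.\ the Selberg–Delange method as in Tenenbaum's book, or Delange's Tauberian theorem) yields
\[
  \#\{d \leq x : p \mid d \Rightarrow p \bmod n \in R\} \sim c_R\, x (\log x)^{|R|/\varphi(n) - 1},
\]
with $c_R = H(1)/\Gamma(|R|/\varphi(n)) > 0$; positivity is clear since $H(1) > 0$ and the Gamma value is positive because $|R|/\varphi(n) > 0$ as $R$ is non-empty.

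The main obstacle is purely technical rather than conceptual: verifying the hypotheses of the Tauberian theorem, namely that $F(s)$ continues to a punctured neighbourhood of $s=1$ with the stated algebraic singularity and satisfies the required mild growth bound on a slightly larger region. This rests on the classical zero-free region for Dirichlet $L$-functions and on controlling the auxiliary factor $G(s)$; both are standard, and since we only need the leading asymptotic (not a power-saving error term) the weakest available forms suffice. One subtlety worth a sentence is the case $R = (\ZZ/n\ZZ)^*$, where $|R|/\varphi(n) = 1$ and the exponent of $\log x$ is $0$; there the statement reduces to the prime number theorem for the integers not divisible by $p \mid n$, and the formula still holds with the convention that $(\log x)^0 = 1$.
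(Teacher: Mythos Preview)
Your proposal is correct and follows essentially the same route as the paper: form the Euler product $F(s)$, use orthogonality of Dirichlet characters to factor it as a product of $L(s,\chi)^{c_\chi}$ times a function holomorphic and non-vanishing past $\re s > 1/2$, identify the resulting branch-point singularity of order $|R|/\varphi(n)$ at $s=1$, and apply a Tauberian theorem (the paper cites Tenenbaum~II.7.28). Your write-up is slightly more explicit about the constant $c_R$ and the edge case $R=(\ZZ/n\ZZ)^*$, but there is no substantive difference in method.
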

\begin{proof}
	The associated Dirichlet series has the Euler product expansion
	$$F(s) = \prod_{\substack{p \\ p \bmod n \in R}}\left(1 - \frac{1}{p^s}\right)^{-1}.$$
	However for $a \in (\ZZ/n\ZZ)^*$ we have
    $$ \frac{1}{\varphi(n)} \sum_{\chi \bmod n} \chi(a) =
    \begin{cases}
        1,& a \equiv 1 \bmod n, \\
        0,& \text{otherwise}.
    \end{cases}  $$
    It follows that
    \begin{align*}
    	F(s) &= \prod_{\substack{p \\ \gcd(p,n)=1}}\left(1 - \frac{\sum_{r \in R} \sum_{\chi \bmod n}\chi(pr^{-1})}{\varphi(n)p^s}\right)^{-1} \\
    	& = G(s)\zeta(s)^{|R|/\varphi(n)} \prod_{\chi}L(s,\chi)^{(\sum_{r \in R}\chi(r))/\varphi(n)}
    \end{align*}
    where $G$ is holomorphic and non-zero on $\re > 1/2$ and the  product
    is over the non-principal Dirichlet characters modulo $n$.
    The Dirichlet $L$-functions $L(s,\chi)$ are holomorphic and non-zero
    on $\re s \geq 1$, hence $F(s)$ has a holomorphic continuation to the line
    $\re s =1$ away from $s = 1$, where there is a branch point singularity
    of the shape $c'_R/(s - 1)^{|R|/\varphi(n)}$ for some $c'_R \neq 0$. The result
    now follows from a Tauberian theorem (e.g.~\cite[Thm.~II.7.28]{Ten15}).
\end{proof}

For the lower bound, consider $m$ with $m - 4 < 0$ in Proposition~\ref{prop:alpha GS8.1i}. We have $R = \{1,3\}$ and $(\ZZ/8\ZZ)^* = \{1,3,5,7\}$ in Lemma \ref{lem:number of d's}. This gives $N(B) \gg B^{1/2}/(\log B)^{1/2}$ (not $B^{1/2}/(\log B)^{1/4}$ as claimed in \cite[Thm.~1.2.(i)]{GS17}).

Now for the upper bound.
We first take care of those surfaces which do not satisfy the hypotheses of Corollary \ref{cor:generic}. For such surfaces, at least one of the following holds
\[
	m-4,m, m(m-4), -(m-4) \in \QQ^{*2}.
\]
In the first case $\Br U_m = \Br \QQ$, hence there is no Brauer--Manin obstruction. As explained in \S \ref{sec:assumptions}, in the second and third cases either there is an integral point or the surface is singular. In the last case where $-(m-4) \in \QQ^{*2}$, Corollary~\ref{cor:generic} is applicable unless $\frac{\sqrt{m} + \sqrt{m-4}}{2}$ is a square in  $\QQ(\sqrt{m}, \sqrt{m-4})$. By Proposition~\ref{prop:no trans in general} this happens precisely when $m = 4- (2(n^2 \pm 1))^2$, for some $n \in \ZZ_{> 0}$. The collection of such $m$ with $|m| \leq B$ is $O(B^{1/4})$, which is clearly negligible in Theorem \ref{thm:Br asympt}.

Let $m$ now satisfy the hypotheses of Corollary \ref{cor:generic}. By Corollary~\ref{cor:HP}, if there is a Brauer--Manin obstruction to the integral Hasse principle, then $m - 4 = \varepsilon 2^{a_2}3^{a_3}5^{a_5}d^2$ is not a square in $\ZZ$. Here $\varepsilon \in \{\pm 1\}$, the $a_i$ are non-negative integers, $d$ is a positive integer coprime to $30$ and satisfying $p \mid d \implies m - 4 \in \Qp^{*2}$. The cardinality of such $|m| \leq B$ is
\[
	\ll \#\left\{ \varepsilon, a_i,d : 
	\begin{array}{l}
	|4 + \varepsilon 2^{a_2}3^{a_3}5^{a_5}d^2| \le B,\\
	\varepsilon 2^{a_2}3^{a_3}5^{a_5} \notin \QQ^{*2}, 
	(d, 30) = 1,
	p \mid d \implies \varepsilon 2^{a_2}3^{a_3}5^{a_5} \in \QQ_p^{*2}
	\end{array}\right\}.
\] 
For $p \nmid 30$ the condition $\varepsilon 2^{a_2}3^{a_3}5^{a_5} \in \QQ_p^{*2}$ is equivalent to a collection of congruence conditions on $p$ in $(\ZZ/120 \ZZ)^*$, of which there are only finitely many possibilities depending on $\varepsilon$ and on the parities of the $a_i$. Moreover, as $m-4$ is not a square, exactly half of the elements of $(\ZZ/120 \ZZ)^*$ arise.

We consider the case where all  the $a_i$ are odd and $\varepsilon = 1$, the other cases being similar. We let $R \subset (\ZZ/120 \ZZ)^*$ be the corresponding residues. Here the above is
\[
	\ll \#\{ a_i \text{ odd},d :  2^{a_2}3^{a_3}5^{a_5}d^2 \le B, p \mid d 
	\implies p \bmod 120 \in R\}.
\] 
 The contribution from where one of the $i$ satisfies $i^{a_i} > B^{1/4}$ is easily seen to be $O(B^{3/8})$, which is negligible in Theorem \ref{thm:Br asympt}. Hence we may assume that $i^{a_i} \leq B^{1/4}$ for each $i$, so that $B/2^{a_2}3^{a_3}5^{a_5} \to \infty$ as $B \to \infty$. Thus we may apply Lemma \ref{lem:number of d's} for the sum over $d$ and see that the above cardinality is 
 \[
	\ll \sum_{a_i} \frac{(B/2^{a_2}3^{a_3}5^{a_5})^{1/2}}{(\log (B/2^{a_2}3^{a_3}5^{a_5}))^{1/2}}
	\ll \frac{B^{1/2}}{(\log B)^{1/2}}
	\] 
	as the sum over the $a_i$ is convergent. This is sufficient for the upper bound in Theorem \ref{thm:Br asympt}, and completes the proof.
\qed

\subsection{Proof of Theorem \ref{thm:no_Br}}
To prove the theorem, we consider the counter-examples to the Hasse principle given in \cite[Prop.~8.1(ii)]{GS17}.

\begin{proposition}
	Let $\ell \geq 13$ be a prime with $\ell \equiv \pm 4 \bmod 9$ and 
	$\ell \not \equiv \pm 1 \bmod 8$.
	Let $m = 4 + 2\ell^2$. Then $\sU_m(\Adele_\ZZ)^{\Br} \neq \emptyset$ but $\sU_m(\ZZ) = \emptyset$, i.e.~$U_m$ is a counter-example to the integral
	Hasse principle which is not explained by the Brauer--Manin obstruction.
\end{proposition}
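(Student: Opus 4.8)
The plan is to deduce the statement almost entirely from Corollary~\ref{cor:HP}, feeding in the non-emptiness criterion \cite[Prop.~6.1]{GS17} and the fact, due to Ghosh and Sarnak, that $\sU_m(\ZZ)=\emptyset$ for these $m$. First I would check that $m=4+2\ell^2$ satisfies the hypotheses of Corollary~\ref{cor:generic}, so that the description of $\Br U_m/\Br\QQ$ is available: $m-4=2\ell^2\notin\QQ^{*2}$; $m\equiv 6\bmod 8$ is not a square; $m(m-4)$ is not a square by Lemma~\ref{lem:m(m-4)} since $m\neq 0,4$; and $-(m-4)=-2\ell^2<0$ is not a square in $\QQ$, so in particular the supplementary condition in Corollary~\ref{cor:generic} concerning $-(m-4)$ is vacuous.

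Next I would verify $\sU_m(\Adele_\ZZ)\neq\emptyset$ via \cite[Prop.~6.1]{GS17}. As $\ell$ is odd we have $\ell^2\equiv 1\bmod 8$, hence $m\equiv 6\bmod 8$ and in particular $m\not\equiv 3\bmod 4$; and as $\ell\equiv\pm 4\bmod 9$ we have $\ell^2\equiv 7\bmod 9$, hence $m\equiv 4+14\equiv 0\bmod 9$, so $m\not\equiv\pm 3\bmod 9$. The crucial local input is that $m-4=2\ell^2\notin\QQ_\ell^{*2}$: here $\valn_\ell(m-4)=2$ is even, so $m-4$ is a square in $\QQ_\ell$ if and only if $2$ is, i.e.~if and only if $\ell\equiv\pm 1\bmod 8$, which is excluded by hypothesis. (This is exactly the point where the Brauer--Manin obstruction of Proposition~\ref{prop:alpha GS8.1i} breaks down.) Since moreover $\ell\geq 13>5$, Corollary~\ref{cor:HP} applies and shows there is no Brauer--Manin obstruction to the integral Hasse principle for $\sU_m$; combined with $\sU_m(\Adele_\ZZ)\neq\emptyset$ this yields $\sU_m(\Adele_\ZZ)^{\Br}\neq\emptyset$.

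Finally, $\sU_m(\ZZ)=\emptyset$ is precisely \cite[Prop.~8.1(ii)]{GS17}, established there by reduction theory, and I would simply invoke it. I do not expect a serious obstacle: the substantive work is already contained in the (somewhat delicate) even-valuation case of the proof of Corollary~\ref{cor:HP}, and what remains is the congruence bookkeeping above. The one place requiring genuine care is the equivalence ``$m-4\notin\QQ_\ell^{*2}\iff\ell\not\equiv\pm 1\bmod 8$'', which is what simultaneously makes the Ghosh--Sarnak obstruction fail and Corollary~\ref{cor:HP} applicable.
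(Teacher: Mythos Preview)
Your proposal is correct and follows the same route as the paper: invoke \cite[Prop.~8.1(ii)]{GS17} for the Hasse failure, and apply Corollary~\ref{cor:HP} with $p=\ell$ (using $\ell>5$ and $2\notin\QQ_\ell^{*2}$ from $\ell\not\equiv\pm1\bmod 8$) to rule out a Brauer--Manin obstruction. The paper's own proof is a terse two-line version of exactly this; your additional verification of the hypotheses of Corollary~\ref{cor:generic} and of local solubility via \cite[Prop.~6.1]{GS17} is just making explicit what the paper leaves implicit.
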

\begin{proof}
	That $U_m$ is a counter-example to the integral Hasse principle is shown in
	\cite[Prop.~8.1(ii)]{GS17}. So it suffices to show that there is no Brauer--Manin
	obstruction in this case. This follows from Corollary~\ref{cor:HP} since the assumptions on $\ell$ imply $\ell > 5$ and $m - 4 \notin \QQ_{\ell}^{\ast 2}$.
 \end{proof}

This is clearly sufficient for Theorem  \ref{thm:no_Br}. \qed

\bibliographystyle{amsalpha}{}

\end{document}